\theoremstyle{plain}
\newtheorem{thm}{Theorem}[section]
\newtheorem{lem}[thm]{Lemma}
\newtheorem{cor}[thm]{Corollary}
\newtheorem{pro}[thm]{Proposition}
\newtheorem{example}[thm]{Example}
\newtheorem{eg}[thm]{Example}
\newtheorem{BFBT}[thm]{Birkhoff's Finite Basis Theorem}
\theoremstyle{definition}
\newtheorem{defn}[thm]{Definition}
\newtheorem{question}[thm]{Question}
\newcommand{\varmem}{\textsf{var-mem}}
\newcommand{\varsimem}{\textsf{var-si-mem}}
\newcommand{\congclass}{\textsf{cong-class}}
\newcommand{\collapse}{\looparrowright}
\newcommand{\qr}{\operatorname{qr}}
\newcommand{\ps}{\operatorname{ps}}
\newcommand{\tp}{\operatorname{tp}}
\newcommand{\divides}{\rightsquigarrow}
\newcommand{\A}{\mathbf A}
\newcommand{\B}{\mathbf B}
\newcommand{\CSP}{\operatorname{CSP}}
\newcommand{\up}[1]{\textup{#1}}
\newcommand{\sg}[2]{\mathrm{sg}_{#1}(#2)}
\newcommand{\tr}{\triangleright}
\newcommand{\FO}{\operatorname{FO}}
\newcommand{\Mod}{\operatorname{Mod}}
\newcommand{\Th}{\operatorname{Th}}
\newcommand{\Cg}{\operatorname{Cg}}
\newcommand{\bigand}{\operatornamewithlimits{\hbox{\Large$\&$}}}
\begin{document}

\title[Preservation, definability and complexity]{Finite model theory for pseudovarieties and universal algebra: preservation, definability and complexity.}

\author{Lucy Ham}
\address{MACSYS, School of Mathematics and Statistics and School of Biosciences, University of Melbourne, Victoria 3010
Australia} \email{lucy.ham@unimelb.edu.au}
\author{Marcel Jackson}
\address{Department of Mathematical and Physical Sciences, La Trobe University, Victoria  3086,
Australia} \email{m.g.Jackson@latrobe.edu.au}

\subjclass[2020]{Primary: 03C13, 08B05; Secondary: 08B26, 68Q15, 68Q19, 08A30, 20M07}
\keywords{}

\begin{abstract}
We explore new interactions between finite model theory and classical streams of universal algebra and semigroup theory.  A key result is an example of finite algebras whose variety is not finitely axiomatisable in first order logic, but where the class of finite members are finitely axiomatisable amongst finite algebras.  
These algebras present a negative solution to a first order formulation of the Eilenberg-Sch\"utzenberger problem, and witness the simultaneous failure of the {\L}os-Tarski Theorem, the SP-Preservation Theorem and Birkhoff's HSP-Preservation Theorem at the finite level.  
The examples also show that a pseudovariety without any finite pseudoequational basis may be finitely axiomatisable in first order logic amongst finite algebras.  
Other results include the undecidability of deciding first order definability of the pseudovariety of a finite algebra, and a mapping from any fixed finite template constraint satisfaction problem to a first order equivalent variety membership problem.
\end{abstract}

\maketitle

\section{Introduction}
The central themes of finite model theory have primarily concerned relational signatures, but several facets of reasonably independent lineage have algebraic origins: tame congruence theory in universal algebra \cite{hobmck} and the theory of pseudovarieties in semigroup theory \cite{alm,eil} are two of prominent examples, both with very significant development and many deep and difficult results. 
The present article aims to develop some stronger cross-fertilisation between the two genetic lines: relational and algebraic.  The article will provide some initial development of several classical finite model theoretic ideas in universal algebraic settings, but also  explore some classical universal algebraic considerations in a new light.  The article is structured with these dual goals in mind and intentionally written in a way that hopes to be approachable to both an algebraic audience and a finite model theory audience.

Many of the contributions relate to \emph{varieties} of algebras: classes of algebras, all of the same signature, that are closed under taking homomorphic images ($\mathsf{H}$), isomorphic copies of subalgebras ($\mathsf{S}$) and direct products ($\mathsf{P}$).  
Equivalently, varieties are the classes that can be defined as the class of models of a set of \emph{equations} (also sometimes called \emph{identities}), meaning universally quantified equalities between algebraic terms.
\emph{Pseudovarieties} are the finite counterpart: classes of finite algebras closed under $\mathsf{H}$, $\mathsf{S}$ and finitary direct products $\mathsf{P}_{\rm fin}$.
The original contributions of the article are sorted into a series of ``Toolkit'' sections and of ``Result'' sections.  
In the toolkit sections we provide some initial development of  basic finite model theoretic considerations with applicability to  considerations in universal algebra, particularly relating to variety membership.  
The list of possible developments is far from complete, but are sufficient for the needs of the results sections and contain a few incidental results as well. 
\begin{itemize}
\item In Section \ref{sec:si} we explore congruence generation and subdirect irreducibility, solving a problem of Bergman and Slutzki \cite{berslu02} and introducing the notion of \emph{definable completely meet irreducible congruences} and \emph{first order subdirect decomposition}.
\item In Section \ref{sec:preservation} we provide some preservation theorems that hold at the finite level in algebraic settings.  An example is Corollary~\ref{cor:compton}, which shows that if a finitely generated pseudovariety is definable by a $\forall^*\exists^*$ sentence amongst finite algebras, then it is the finite part of a finitely based variety, and so definable by a finite set of equations.
\item In Section \ref{sec:EF} we explore Ehrenfeucht-Fra\"{\i}ss\'e games in algebraic signatures, providing a completeness theorem in uniformly locally finite settings.
\end{itemize}
The ``Result'' sections use ideas from the toolkit sections to provide some new finite model theoretic results for universal algebra.
\begin{itemize}
\item In Section \ref{sec:FO} we provide examples of finite algebras whose pseudovariety is first order definable by a $\forall^*\exists^*\forall^*$ sentence, yet not finitely based in the conventional sense: there is no finite axiomatisation by equations, nor even by pseudoequations.  
In other words, they provide a negative solution to a first order logic formulation of the long-standing Eilenberg-Sch\"utzenberger problem~\cite{eilsch} (which remains open in original formulation and described as ``diabolical'' by Shaheen and Willard~\cite{shawil}), and witnesses the simultaneous failure of multiple classical preservation theorems.  
The examples also provide a tight upper limit to any further extensions of the aforementioned Corollary \ref{cor:compton}.  
These results showcase toolkit Sections~\ref{sec:si} and \ref{sec:preservation}.
\item In Section \ref{sec:comp}, we show how to map the complexity-theoretic landscape of fixed finite template constraint satisfaction problems precisely into the pseudovariety membership problem for finite algebras: for every finite relational structure~$\mathbb{S}$ we find a finite algebra ${\bf S}$ such that the constraint satisfaction problem $\CSP(\mathbb{S})$ for $\mathbb{S}$ is first order equivalent to the  problem of deciding membership in the variety of~$\mathbf{S}$.  
Toolkit Sections~\ref{sec:si} and \ref{sec:preservation} are again utilised.
\item In Section \ref{sec:undecid} we use material from the toolkit Section~\ref{sec:EF} to provide a proof that McKenzie's $A(\mathcal{T})$ construction from \cite{mck2} yields the undecidability of the finite model theoretic version of Tarski's Finite Basis Problem: the proof uses a result from Section~\ref{sec:FO} to bypass both McKenzie's $F(\mathcal{T})$ construction~\cite{mck3} and Willard's Finite Basis Theorem \cite{wil2}.
\end{itemize}
As the first item in the results collection is perhaps the centrepiece of the paper, we give a little more detail here; full details are in Section~\ref{sec:FO}.
Let ${\bf S}$ be any finite algebra whose operations include a semilattice operation, and for which there is a projection term: a term $t(x,y)$ explicitly in two variables but satisfying $t(x,y)\approx y$.  
Any finite lattice is an example, as the absorption law provides the projection term and either meet or join serves as the semilattice operation.
Let $\flat({\bf S})$ denote the flat extension of ${\bf S}$: the result of adding a new point $0$ that is absorbing in all operations of ${\bf S}$, 
as well as a \emph{further} semilattice meet operation in which all distinct elements meet to~$0$.
We show in Theorem~\ref{thm:FO} that the pseudovariety generated by $\flat({\bf S})$ is always equal to the class of finite models of some $\forall^*\exists^*\forall^*$ sentence.
However if~${\bf S}$ has no finite basis for its universal Horn theory, then the variety generated by $\flat({\bf S})$ is not finitely axiomatisable in first order logic (equivalently, has no finite basis for its equational theory).
Moreover, in all known examples of finite algebras where the universal Horn sentences of ${\bf S}$ are without a finite basis, this can be witnessed by finite algebras ${\bf S}_1,{\bf S}_2,\dots$ in the sense that each~${\bf S}_n$ lies outside of the universal Horn class of ${\bf S}$ (so fails some universal Horn sentence satisfied by ${\bf S}$), but the $n$-generated subalgebras of~${\bf S}_n$ lie within the universal Horn class of ${\bf S}$ (so satisfy all $n$-variable universal Horn sentences).
In this situation, we obtain the stronger property that the class of finite algebras in the variety of~$\flat({\bf S})$ has no finite basis for its equations, nor indeed its pseudoequations.
Belkin~\cite{bel}, and later Tumanov~\cite{tum}, show that there are finite lattices with the nonfinite basis property for their universal Horn class, witnessed by finite algebras in the sense just described.  
The simplest is $M_{3-3}$ with~10 elements (so that $\flat(M_{3-3})$ has 11 elements).

Before we commence the preliminary sections and subsequent development of original results, we provide some deeper contextual background to the developments and their motivation.  
\subsection{Computational complexity.}\label{subsec:compcomp}
A central context for the present work has been the following computational problem, where the \emph{variety generated by ${\bf B}$} denotes the smallest variety containing ${\bf B}$, namely $\mathsf{HSP}({\bf B})$, or equivalently the class $\Mod(\Th_{\rm eq}({\bf B}))$ of models of the equational theory of ${\bf B}$.\\[1mm]
\fbox{\parbox{0.9\textwidth}{\noindent $\varmem(*,*)$\\
Instance: two finite algebras ${\bf A}$ and ${\bf B}$ of the same signature.\\
Question: is ${\bf A}$ in the variety generated by ${\bf B}$?}}
\medskip

Typically we are interested in the case where ${\bf B}$ is fixed, and the input is an algebra~${\bf A}$ only; this problem is denoted by $\varmem(*,{\bf B})$, and we similarly let $\varmem({\bf A},*)$ denote the dual problem where ${\bf A}$ is fixed.  We will also be interested in a promise restriction of this problem, where the input is required to be subdirectly irreducible (that is, there is a unique minimum nontrivial congruence; see Section~\ref{sec:si}).  We denote this by $\varsimem(*,{\bf B})$.

The \varmem\ problem is, in general, doubly exponential time complete, even in the restricted $\varmem(*,{\bf B})$ form: Marcin Kozik \cite{koz09} showed there is a finite algebra~${\bf B}$, for which $\varmem(*,{\bf B})$ is 2\texttt{EXPTIME}-complete, while a doubly exponential time algorithm solving \varmem\ in full generality can be found in Bergman and Slutzki~\cite{berslu00}.  
Overall, it seems rather hard to determine the complexity of $\varmem(*,{\bf B})$ for given~${\bf B}$, and it has only been over the last couple of decades that any examples were found for which the problem was not polynomial time solvable, despite awareness of this problem at least 10 years prior (Problem 1 of \cite{alm} or Problem 3.11 of \cite{khasap} for example).
Zolt\'an Sz\'ekely found a flat graph algebra with \texttt{NP}-complete finite membership in its variety \cite{sze}.  Monoid and semigroup examples with \texttt{NP}-hard finite membership were subsequently provided by the second author and Ralph McKenzie \cite{jacmck}, eventually leading to a 6-element example by the second author \cite{jac:SAT}, the minimum possible cardinality (for a semigroup).  Kl\'{\i}ma, Kunc and Pol\'{a}k~\cite{KKP} provide a natural family of finite semigroup examples (the smallest has 42 elements), each generating a variety with co-\texttt{NP}-complete finite membership problem.  
A 26-element involuted semigroup whose variety has \texttt{NP}-hard finite membership was found by the second author and Volkov~\cite{jacvol}, though this can be reduced to 10 elements using the developments of \cite{jac:SAT}; and a $3$-element commutative and additively idempotent semiring with \texttt{NP}-complete membership has been given in~\cite{JRZ}, the smallest possible size for any algebra for intractable $\varmem$ problem. 
Marcin Kozik found several examples of increasingly harder finite membership (\texttt{PSPACE}-complete and \texttt{EXPSPACE} complete~\cite{koz07}), eventually leading to the aforementioned example with 2\texttt{EXPTIME}-complete finite membership problem for its variety~\cite{koz09}.

There is a connection between the \varmem\ problem and the finite basis problem, as when ${\bf B}$ has a finite basis for its equational theory, then one can decide $\varmem(*,{\bf B})$ by simply testing the equations in the basis on the input ${\bf A}$.  Not only is this a polynomial time solution, it shows that $\varmem(*,{\bf B})$ is first order definable, in the sense that the YES instances of the problem are precisely the finite models of a single first order sentence (the conjunction of the equations in the basis). 
This immediately motivates a finite model theoretic consideration: is this the only way that $\varmem(*,{\bf B})$ can be first order definable?  In the next section we will see that this question is the natural first order variant of the long standing Eilenberg-Sch\"utzenberger problem.

We also consider variants of the $\varmem$ problem, allowing for other combinations of the operators $\mathsf{H}$, $\mathsf{S}$ and $\mathsf{P}$, as follows.  In the following, $\mathsf{K}$ is any operator built from a combination of $\mathsf{H}$, $\mathsf{S}$ and $\mathsf{P}$, possibly with repeats.  Sometimes we also consider finitary direct products $\mathsf{P}_{\rm fin}$ as well as $\mathsf{P}^+$, meaning direct products over a nonempty set of models.\medskip

\fbox{\parbox{0.9\textwidth}{\noindent $*\in \mathsf{K}(*)$ (the $\mathsf{K}$ membership problem)\\
Instance: two finite algebras ${\bf A}$ and ${\bf B}$ of the same signature signature.\\
Question: is ${\bf A}\in\mathsf{K}({\bf B})$?}}
\medskip

Beyond the $\varmem(*,*)$ problem, which is $\mathsf{K}=\mathsf{HSP}$, we will primarily be interested in $\mathsf{K}=\mathsf{SP}$, but other combinations such as $\mathsf{HS}$, and $\mathsf{H}$ are of interest.  As noted in \cite[Table~1]{jacmck}, the $*\in \mathsf{K}(*)$ problem is in \texttt{NP} for each of $\mathsf{K}\in\{\mathsf{SP},\mathsf{HS},\mathsf{H},\mathsf{S}\}$, and in the uniform formulation (both algebras are allowed as input) are \texttt{NP}-complete, even within semigroups.

In general, we will say that a membership problem for a class of structures $\mathscr{K}$ is \emph{first order definable} if there is a first order sentence $\Phi$ such that ${\bf A}\in \mathscr{K}$ if and only if ${\bf A}\models\Phi$.
From a complexity-theoretic perspective, first order definability is an extremely strong property, placing the problem within the complexity class~$\texttt{AC}^0$ (a proper subclass of logspace \texttt{L}) denoting base level of the alternating circuit hierarchy: polynomial sized, constant-depth unlimited fan-in circuits.  
Immerman~\cite{imm} showed that a membership problem lies in~$\texttt{AC}^0$ if and only if it is first order definable once all objects are endowed with a strict linear order.

\subsection{Universal algebra.}
One motivation for examining the finer computational complexity of tractable variety membership problems is that there are connections with two of the most famous open problems in universal algebra.  
\begin{quotation}
\noindent {\em The Eilenberg-Sch\"utzenberger Problem \cite[p.~417]{eilsch}.}  (The ES problem.) Is it true that when a finite algebra ${\bf A}$ of finite signature generates a variety without a finite equational basis, then the finite part of this variety has no finite equational basis?
\end{quotation}
The ES problem was solved positively for semigroups by Mark Sapir~\cite{sap}; see also Shaheen and Willard~\cite{shawil} for further discussion and references.
If ${\bf A}$ were a counterexample  to the  Eilenberg-Sch\"utzenberger Problem, then the complexity of the finite membership problem in $\mathsf{HSP}({\bf A})$ is in \texttt{FO} (the class of first order definable decision problems), even if $\mathsf{HSP}({\bf A})$ itself is not finitely axiomatisable in first order logic.  Indeed, in the context of computational complexity the following variant might seem more pertinent.
\begin{quotation}
\noindent {\em First order ES Problem.}  Is it true that when a finite algebra ${\bf A}$ of finite signature generates a variety without a finite axiomatisation in first order logic, then the finite part of this variety has no finite axiomatisation in first order logic? 
\end{quotation}
Even though the finite axiomatisability in the premise of this implication is equivalent to finite axiomatisability by equations, the corresponding conclusion is not clear at the finite level.  
Our examples in Section~\ref{sec:FO} show that the First order ES Problem has a negative solution.  

The next problem is usually attributed to a spoken speculation by Bjarni J\'onsson at Oberwolfach in 1976 (see \cite[pp.~256]{jacmcn}) but also appears in the PhD thesis of Robert E.~Park \cite{par} from the same year.  (See \cite{wilFBP} for detailed discussion of this problem.)
\begin{quotation}
\noindent {\em The J\'onsson-Park Conjecture \cite{par}.} (JP conjecture.) Does every finite algebra of finite signature whose variety has only finitely many subdirectly irreducibles also have a finite basis for its equations?
\end{quotation}
If ${\bf A}$ were a counterexample  to the JP conjecture then the complexity of $\varmem(*,{\bf B})$ is in \texttt{P}, because any instance ${\bf A}$ may be decomposed in polynomial time to a polynomial number of subdirectly irreducible quotients \cite{DDK}, and it suffices to check each of these for membership in the finite list of subdirectly irreducibles in $\mathsf{HSP}({\bf A})$.  We will present a universal algebraic condition that in some cases enables this decomposition to be performed as a first order construction.  
A first order variant of the JP Conjecture seems relevant, though we make no conjecture. 
\begin{quotation}
\noindent {\em First order JP Problem.} If a finite algebra of finite signature generates a variety with only finitely many subdirectly irreducibles, must its pseudovariety be definable in first order logic at the finite level?
\end{quotation}
We do not resolve this problem in the present article, though demonstrate some of the ideas presented by giving a reasonably straightforward positive verification of the First order JP Problem in the case of semilattice-based algebras.  This result follows already from the more powerful finite basis theorem for congruence meet semidistributive varieties due to Ross Willard \cite{wil2}, but our proof is very different and significantly easier.

\subsection{Preservation Theorems}\label{subsec:preservation}
The First order ES-problem is related to the programme of classifying which classical model theoretic preservation theorems hold at the finite level; see \cite{alegur} or \cite[\S2]{rosen}.   We briefly recall some of the classical preservation theorems of relevance; $\Mod(\Phi)$ will denote the models of the sentence (or set of sentences) $\Phi$.
\medskip

\noindent {\bf The {\L}os-Tarski Theorem} (Preservation Theorem for $\mathsf{S}$-closed classes).
\begin{quotation}
Let $\mathcal{L}$ be a signature and $\mathscr{K}$ be a hereditary class of $\mathcal{L}$-structures (that is, closed under taking isomorphic copies of substructures). If $\mathscr{K}=\Mod(\Phi)$ for some first order sentence $\Phi$, then $\Phi$ is equivalent to a universal sentence.
\end{quotation}
For the following recall that an existential positive sentence in first order logic is a sentence of the form $
\exists \bar{x}\phi(\bar{x})$ where $\phi(\bar{x})$ is an open formula built from atomic formul{\ae} using only conjunction and disjunction (no negation or implication).
\medskip

\noindent {\bf Homomorphism Preservation Theorem}.
\begin{quotation}
Let $\mathcal{L}$ be a signature and $\mathscr{K}$ be a class of $\mathcal{L}$-structures closed under homomorphisms: whenever ${\bf A}$ and ${\bf B}$ are $\mathcal{L}$-structures with a homomorphism from ${\bf A}$ into ${\bf B}$ and ${\bf A}\in \mathscr{K}$ then ${\bf B}\in \mathscr{K}$ also.  
If $\mathscr{K}=\Mod(\Phi)$ for some first order sentence $\Phi$, then $\Phi$ is equivalent to an existential positive sentence.
\end{quotation}
Recall that a \emph{Horn clause} is a disjunction of atomic formul{\ae} and negated atomic formul{\ae}, where at most one disjunct is not negated.  A \emph{Horn formula} is a conjunction of Horn clauses, and a \emph{universal Horn sentence} is a universally quantified Horn formula.
\medskip

\noindent {\bf Preservation Theorem for $\mathsf{SP}^+$-classes.}
\begin{quotation}
If $\Phi$ is a first order sentence defining a class $\mathscr{K}$ of structures closed under taking isomorphic copies of substructures and direct products over nonempty sets of models, then $\Phi$ is logically equivalent to a universal Horn sentence.
\end{quotation}
If $\mathsf{P}^+$ is replaced by $\mathsf{P}$ then $\Phi$ is equivalent to a universal Horn sentence where every Horn clause has precisely one non-negated conjunct; these are usually written as implications and called \emph{quasiequations}.
As we explain in Subsection~\ref{subsec:prelimvarieties}, there is almost no difference between a $\mathsf{SP}^+$ class and its $\mathsf{SP}$-closure: they differ by at most one model, up to isomorphism.

The following preservation theorem due to Garrett Birkhoff~\cite{bir} is formulated in the algebraic setting only, though with appropriate definition of ``variety'' and ``equations'', it also holds for general model theoretic structures.
\medskip

\noindent{\bf Preservation Theorem for $\mathsf{HSP}$-classes in algebraic signatures (Birkhoff's Preservation Theorem).}
\begin{quotation}
If $\Phi$ is a first order sentence defining a variety $\mathscr{V}$, then $\Phi$ is logically equivalent to a universally quantified conjunction of equations.
\end{quotation}
A failure of {\L}os-Tarski theorem at the finite level was discovered first by Tait~\cite{tai} and then independently by Gurevich and Shelah; see \cite{alegur} for several variants of the Gurevich-Shelah example.  On the other hand, Rossman~\cite{ros} solved Problem~2 of Alechina and Gurevich \cite{alegur} positively by showing that the Homomorphism Preservation Theorem holds at the finite level, in relational signatures.  

The $\mathsf{SP}^+$\!- and $\mathsf{HSP}$-Preservation Theorems need minor adjustment to be formulated at the finite level because general direct products produce infinite objects from finite ones.  
Thus we restrict to closure under finitary direct products only: $\mathsf{P}_{\rm fin}$.  It turns out that there are many familiar failures of the $\mathsf{HSP}_{\rm fin}$-preservation theorem at the finite level (see Example~\ref{eg:Jtrivial} below for instance).  

For $\mathsf{SP}_{\rm fin}^+$\!-closed classes the situation is far less clear, and the possibility of the corresponding preservation theorem holding at the finite level is Problem 1 of Alechina and Gurevich~\cite{alegur}.   Clark, Davey, Jackson, Pitkethly \cite[Theorem~4.2]{CDJP} implicitly resolved this negatively in algebraic signatures (formally, it is stated for Boolean topological algebras, but the example also works for finite algebras).  
In the present article we make further amendments to this example to provide a simultaneous failure of all three of the $\mathsf{S}$, $\mathsf{SP}_{\rm fin}^+$ and $\mathsf{HSP}_{\rm fin}$-Preservation Theorems at the finite level: the class will be closed under $\mathsf{HSP}_{\rm fin}$, definable by a $\forall^*\exists^*\forall^*$ sentence, but not definable by any first order $\forall^*\exists^*$ sentence.
Problem 1 from~\cite{alegur} remains tantalisingly open for relational signatures, perhaps the most significant classical model theoretic result for which no finite level resolution is at hand.  The Homomorphism Preservation Theorem similarly holds similar challenge in the case of algebraic signatures, though the first author has shown that it fails when relativised to the class of bounded lattices \cite{ham}.

There are many other preservation theorems that we do not address in the present article.  One that does not appear in \cite{alegur,rosen}, but which has recently arisen in the context of variety-like classes of semigroups is the preservation theorem for quantified conjunctions of equations (``equation systems''); see Higgins and Jackson~\cite{higjac}.  
These admit a preservation theorem in terms of closure under taking of elementary embeddings~($\textsf{E}$), homomorphic images and direct products (originally asserted by Keisler~\cite{kei}); see also  \cite{higjac23} for the existential case (which includes closure under the taking of extensions).  We do not know if our example violates the corresponding preservation theorems at the finite level.

\subsection{Pseudovarieties} 
A substantial impetus for the development of finite model theory has been from computer science, via descriptive complexity, and in the context of database theory.  Relational structures have the prominent role in these settings.
An important context in which classes of finite \emph{algebraic} systems have prominence is in the interaction between finite semigroups and regular languages.  Here, the famous Eilenberg-Sch\"utzenberger correspondence \cite{eil} leads to the study of \emph{pseudovarieties}: classes of finite algebras closed under homomorphisms, subalgebras and finitary direct products.  The Eilenberg-Sch\"utzenberger correspondence provides a dual isomorphism between the lattice of semigroup pseudovarieties and the lattice of so-called varieties of regular languages.  This also provides a further motivation for  problems such as $\varmem(*,{\bf B})$, as membership of languages in natural classes of regular languages can be recast as membership problems of their syntactic semigroup in natural pseudovarieties.  

The standard approach to axiomatising pseudovarieties is by way of equations between pseudowords (or pseudoterms in the case of general algebraic systems), which are  equalities between elements of free profinite algebras; a good reference is Almeida's book~\cite{alm} (see Example \ref{eg:Jtrivial} below).   
Our counterexample to the first order ES Problem has no finite axiomatisation by pseudoequations, despite being finitely axiomatised in first order logic (without recourse to pseudoterms); a semigroup example awaits discovery, or a nonexistence proof.

\subsection{Historical note}
Much of the early development of the present article occurred prior to the first author's article \cite{ham},  emerging out of the second author's work in the 2000's work~\cite{CDJP} and~\cite{jac:flat} and incorporating project work by the first author under the support of the Australian Mathematical Sciences Institute during the summer of 2010/2011.  
Further development occurred within the second author's funded project \emph{Algebra in Complexity and Complexity in Algebra} (Australian Research Council DP1094578).  
A number of the ideas were also presented by the second author in a talk at the ALCFest conference at Charles University Prague in 2014, and at the Workshop on Finite Model Theory and Multi-Valued Logics, University of Queensland in 2022.

\section{Preliminaries} 
\subsection{Varieties and other algebraic concepts}\label{subsec:prelimvarieties}
We direct the reader to a typical universal algebra text such as \cite{ber} or \cite{bursan} for deeper explanation around the following concepts.  A reader familiar with the area can skip the section entirely.

The algebraic focus of the article is primarily concerned with theory of varieties and to a lesser extent quasivarieties and universal Horn classes, and their restriction to finite algebras.
Recall from the introduction that a variety of algebras is any class $\mathscr{V}$ of algebras of the same signature that is defined by some set (possibly infinite) of equations, or equivalently by Birkhoff's preservation theorem if it is closed under the class operators $\mathsf{H}$, $\mathsf{S}$ and $\mathsf{P}$.  
 We now give some basic examples of classes encountered in the paper.
 \begin{example}
The variety of semilattices is the class of algebras with a single binary operation $\wedge$ defined by idempotence, commutativity and associativity\up:
 \[
 x\wedge x\approx x,\quad x\wedge y\approx y\wedge x, \quad x\wedge (y\wedge z)\approx (x\wedge y)\wedge z,
 \]
 each universally quantified.  
 Equivalently, the variety of semilattices is equal to $\mathsf{HSP}(\mathbf{S}_\wedge)$, where $\mathbf{S}_\wedge$ denotes the $2$-element semilattice \up(which can be represented by interpreting $\wedge$ as the usual number-theoretic multiplication on the two-element set~$\{0,1\}$\up).
 \end{example}
 We will say that an algebra is \emph{semilattice-based} if its fundamental operations include a binary operation satisfying the semilattice axioms.
 
  When ${\bf A}$ is finite, then the finite members of $\mathsf{HSP}({\bf A})$ can be obtained by taking only finitary direct products $\mathsf{P}_{\rm fin}$; in other words we need only consider the pseudovariety $\mathsf{HSP}_{\rm fin}({\bf A})$. 
While the equational theory of ${\bf A}$ continues to define the finitely generated pseudovariety $\mathsf{HSP}_{\rm fin}({\bf A})$ amongst finite algebras (but possibly requiring infinitely many equations), pseudovarieties cannot in general be defined by equations.  
Instead, pseudoequations are the most common approach: equalities between limits of sequences of terms rather than between terms (see Almeida \cite{alm} for example).
  \begin{example}\label{eg:Jtrivial}
  The pseudovariety of $\mathscr{J}$-trivial monoids is the class of finite monoids satisfying 
  \[
\forall a\forall b\forall x_1\forall y_1\forall x_2\forall y_2\ x_1ay_1\approx b\And x_2by_2\approx a\rightarrow a\approx b.
\]
and can also be axiomatised by pseudoequations $x^\omega\approx x^{\omega+1}$ and $(xy)^\omega\approx (yx)^\omega$, where $x^\omega$ denotes the implicit operation defined as the limit $\lim_{n\to\infty}x^{n!}$, the limit of an eventually constant sequence in any finite monoid.
  \end{example}
We will write $\Mod_{\rm fin}(\Sigma)$ to denote the \emph{finite} models of a sentence, or set of sentences $\Sigma$ in some logical framework, and say that a class $\mathscr{K}$ of finite algebras is \emph{first order definable} (amongst finite models) if there is a first order sentence $\Sigma$ such that $\mathscr{K}=\Mod_{\rm fin}(\Sigma)$.
From Example~\ref{eg:Jtrivial}, the $\mathscr{J}$-trivial monoids are first order definable, as $\Mod_{\rm fin}(\Sigma)$, where $\Sigma$ is the finite set consisting of the usual equational axioms for monoids along with the implication displayed in the example.

As described in Subsection \ref{subsec:preservation}, a \emph{universal Horn class} is a class of structures (algebras, relational structures, or general structures with both operations and relations), defined by universally quantified conjunctions of Horn clauses.
These may also equivalently be described as classes closed under taking  isomorphic copies induced substructures ($\mathsf{S}$ again), direct products over nonempty sets of models $\mathsf{P}^+$, and ultraproducts $\mathsf{P}_u$.
\begin{example}
Simple graphs may be considered as symmetric loopless digraphs, and the class of all such structures is a universal Horn class defined by 
\[
\neg (x,x)\in R,\qquad \neg (x,y)\in R\vee (y,x)\in R.
\]
This universal Horn class has no finite generator.
\end{example}
As usual, and as noted in Subsection~\ref{subsec:preservation}, when a Horn clause has exactly one positive disjunct, then we can rewrite it as an implication: so $\neg (x,y)\in R\vee (y,x)\in R$ becomes $(x,y)\in R\rightarrow (y,x)\in R$; the implication in Example \ref{eg:Jtrivial} is another example.  
Sentences of this form are usually called quasiequations, and classes definable by quasiequations are known as quasivarieties.
The distinction between quasivarieties and the more general universal Horn classes is minimal, as they differ only in that for quasivarieties, the direct product closure condition is extended to include the direct product over the empty set of structures.
This degenerate case of direct product returns the one element total structure (any relations in the signature are total on this one point).  
This one-element structure is the only difference between a quasivariety and a universal Horn class, so from a computational and axiomatisability perspective they are not significantly different.

Recall that a class is \emph{locally finite} if for every $n$, the $n$-generated substructures are finite.  
For classes of finite objects this notion is degenerate, but the next notion is not.
A class is \emph{uniformly local finite} if for all $n$, there exists $m$ such that the $n$-generated substructures have size at most $m$.  
In most cases, what we really use is the less well-known concept \emph{regularly locally finite}, which means that there are only finitely many $n$-generated substructures up to isomorphism, all finite.  In finite signatures however, uniform and regular local finiteness are equivalent, see Bezhanishvili~\cite{bez} (noting that we have bypassed consideration of whether the class in question is hereditary by including ``substructures'' in the definition).
All classes of relational structures are uniformly locally finite, as there is no generation power.  
The following lemma is folklore and we omit the easy proof.
\begin{lem}
Let $\mathscr{K}$ be a uniformly locally finite class of algebras of the same finite signature.  
Then the pseudovariety generated by $\mathscr{K}$ is also uniformly locally finite.
\end{lem}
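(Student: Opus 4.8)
The plan is to obtain the bound through the ambient \emph{variety} $\mathscr{V}:=\mathsf{HSP}(\mathscr{K})$ rather than through the pseudovariety directly. Because finitary products are a special case of arbitrary products, every algebra in the pseudovariety generated by $\mathscr{K}$ is a finite member of $\mathscr{V}$; hence any uniform bound on the sizes of $n$-generated members of $\mathscr{V}$ transfers verbatim to the pseudovariety. It therefore suffices to prove that $\mathscr{V}$ is uniformly locally finite. As $\mathscr{V}$ is a variety, this in turn reduces to showing that its relatively free algebra $\mathbf{F}_{\mathscr{V}}(n)$ on $n$ free generators is finite for each $n$, since every $n$-generated member of $\mathscr{V}$ is a homomorphic image of $\mathbf{F}_{\mathscr{V}}(n)$ and so has cardinality at most $|\mathbf{F}_{\mathscr{V}}(n)|$.

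To bound $\mathbf{F}_{\mathscr{V}}(n)$ I would first upgrade uniform local finiteness to \emph{regular} local finiteness, which is legitimate because the signature is finite (Bezhanishvili): there are then, up to isomorphism, only finitely many $n$-generated subalgebras of members of $\mathscr{K}$, say $\mathbf{B}_1,\dots,\mathbf{B}_k$, each of cardinality at most the uniform bound $m(n)$. Writing $\Eq_n$ for the set of equations in the variables $x_1,\dots,x_n$ valid in a class, the key claim is
\[
\Eq_n(\mathscr{K}) = \Eq_n(\{\mathbf{B}_1,\dots,\mathbf{B}_k\}).
\]
The inclusion from left to right holds because each $\mathbf{B}_i$ lies in $\mathsf{S}(\mathscr{K})$ and equations are inherited by subalgebras; the reverse inclusion holds because the value of any $n$-ary term at a tuple $\bar{a}$ from a member $\mathbf{A}$ of $\mathscr{K}$ is computed inside the subalgebra $\langle\bar{a}\rangle$, which is isomorphic, by a generator-respecting map, to one of the $\mathbf{B}_i$. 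Granting the claim, $\mathbf{F}_{\mathscr{V}}(n)$ coincides with the relatively free algebra on $n$ generators of $\mathsf{HSP}(\{\mathbf{B}_1,\dots,\mathbf{B}_k\})$, and the standard construction embeds it subdirectly into the product $\prod_{i=1}^{k}\mathbf{B}_i^{\,(B_i)^n}$ (send the class of a term to the tuple of its induced term operations). This is a finite product of finite algebras, whence $|\mathbf{F}_{\mathscr{V}}(n)|\le\prod_{i=1}^{k}|B_i|^{|B_i|^n}<\infty$.

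The routine pieces above are the preservation of equations under $\mathsf{S}$ and the subdirect embedding of a relatively free algebra into a power of its generators. The one genuine point of substance is the passage from the numerical bound $m(n)$ on the sizes of $n$-generated subalgebras to the \emph{finiteness of the number of isomorphism types} of such subalgebras: it is exactly here that the finite-signature hypothesis is used, via the equivalence of uniform and regular local finiteness. Without it the equality $\Eq_n(\mathscr{K})=\Eq_n(\{\mathbf{B}_1,\dots,\mathbf{B}_k\})$ would range over a possibly infinite family and the product bounding $\mathbf{F}_{\mathscr{V}}(n)$ need not be finite, so I would regard this reduction as the crux of the argument.
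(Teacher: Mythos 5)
Your proof is correct. There is nothing in the paper to compare it against: the authors explicitly declare this lemma folklore and omit the proof. Your argument is precisely the standard one that the omission alludes to — pass to the variety $\mathsf{HSP}(\mathscr{K})$, use the Bezhanishvili equivalence of uniform and regular local finiteness (which the paper quotes immediately before the lemma) to get finitely many isomorphism types $\mathbf{B}_1,\dots,\mathbf{B}_k$ of $n$-generated subalgebras, identify the $n$-variable equational theories, and bound the relatively free algebra by embedding it into the finite product $\prod_{i=1}^{k}\mathbf{B}_i^{\,|B_i|^n}$ — and it is consistent with how the lemma is actually used later (the paper's proof of Birkhoff's Finite Basis Theorem invokes exactly the finiteness of the $k$-generated free algebra that you establish). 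You have also correctly isolated the one non-routine point: the passage from a cardinality bound to finitely many isomorphism types is where the finite-signature hypothesis is indispensable, and without it the lemma is genuinely false (e.g.\ with infinitely many unary operations one can make $1$-generated subalgebras of finitary products arbitrarily large while all generators have two elements).
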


The next lemma is usually called Birkhoff's Finite Basis Theorem after Garrett Birkhoff~\cite{bir}; versions of it can be found in any universal algebra text; see Theorem 5.27 of \cite{ber}, Theorem~4.2 of Burris and Sankappanavar \cite{bursan}, or Theorem 7.16 of Freese, McKenzie, McNulty and Taylor \cite{ALV} for example.   We give a slight enhancement to the usual presentation, and for this reason only we sketch the proof, even if it is not more than some observations made along the way of the standard proof.  The key addition is that the replacement rule alone is sufficient in syntactic equational inference; see Section~7.2 of~\cite{ALV} for a thorough treatment of syntactic equational inference.
\begin{BFBT}\label{BFBT}
Let $\mathscr{K}$ be a uniformly locally finite class of algebras in a finite signature, let $k$ be a positive integer, and let $\vec{x}$ abbreviate a $k$-tuple of variables $x_0,\dots,x_{k-1}$.  Then  either $\mathscr{K}$ is trivial and satisfies $x\approx y$ or there is a finite set of terms 
$
T_{\rm BFB}\coloneqq \{u_i(\vec{x})\mid i=1,\dots,\ell\}
$
with the following properties\up:
 \begin{enumerate}
 \item $T_{\rm BFB}$ is closed under taking subterms\up;
 \item for every fundamental operation $f$ of arity $n$, and every $i_1,\dots,i_n\leq 
 \ell$ there is precisely one $i'\leq\ell$ for which 
 \[
 \mathscr{K}\models f(u_{i_1}(\vec{x}),\dots,u_{i_n}(\vec{x}))\approx u_{i'}(\vec{x})\up;
 \]
\up(let the set of these satisfied equations be denoted $\Sigma_{\rm BFB}$\up;\up)
  \item for every term $t(\vec{x})$ there is a unique $i\leq \ell$ such that $\mathscr{K}\models t(\vec{x})\approx u_{i}(\vec{x})$ and 
  \item whenever $\mathscr{K}\models t(\vec{x})\approx u_{i}(\vec{x})$ there is a proof of $t(\vec{x})\approx u_{i}(\vec{x})$ from $\Sigma_{\rm BFB}$ that proceeds by a sequence of exact replacements \up(no variable substitutions are required\up).
 \end{enumerate}
 In particular, $\Sigma_{\rm BFB}$ is a finite basis for the $k$-variable equational theory of $\mathscr{K}$.
\end{BFBT}
\begin{proof} 
Because $\mathscr{K}$ is uniformly locally finite, the $k$-generated free algebra ${\bf F}$ for $\mathsf{HSP}(\mathscr{K})$ is finite.  The properties described are then just a description of a canonical presentation of ${\bf F}$ by its operation tables, with item (4) corresponding to the evaluation of $t(\vec{x})$ in the algebra ${\bf F}$ at the free generators.  For more detail, inductively construct $T_{\rm BFB}$ as a list of representatives of the elements of ${\bf F}$, starting with the variable symbols $x_0,\dots,x_{k-1}$ as free generators, which are distinct elements of~${\bf F}$ and the base of case of the generation of $T_{\rm BFB}$.  Now iterate the following process; for each operation $f$ (of arity $n$, say), and each selection of $n$ elements from the list $u_{i_1}(\vec{x}),\dots,u_{i_n}(\vec{x})$ of so-far constructed terms, check to see if $f(u_{i_1}(\vec{x}),\dots,u_{i_n}(\vec{x}))$ represents a new element of ${\bf F}$: if so, add it to the list; otherwise ignore it.  We have $T_{\rm BFB}$ at the inevitable termination of this process (${\bf F}$ is finite): when no new elements are added to the list, for all operations and all selections of inputs to those operations from the list so far constructed. 
Properties (1), (2) and (3) follow very easily from this construction.    
For item (4) observe that when an arbitrary term $t(x_0,\dots,x_{k-1})$ is evaluated in ${\bf F}$ at the free generators $x_0,\dots,x_{k-1}$, the evaluation process constitutes a reduction to an element in $T_{\rm BFB}$.  More precisely, we may proceed as follows: take a smallest subterm of $t(x_0,\dots,x_{k-1})$ that does not appear in $T_{\rm BFB}$.  Because the variables are in $T_{\rm BFB}$, this subterm must be of the form $f(u_{i_1}(\vec{x}),\dots,u_{i_n}(\vec{x}))$ for some fundamental operation $f$ and some terms 
$u_{i_1}(\vec{x})$, \dots, $u_{i_n}(\vec{x}))$ in $T_{\rm BFB}$.  By (2) there is a unique $i'\leq \ell$ such that $\mathscr{K}\models f(u_{i_1}(\vec{x}),\dots,u_{i_n}(\vec{x}))\approx u'(x_0,\dots,x_{k-1})$ and this law is in $\Sigma_{\rm BFB}$.  Replace the subterm $f(u_{i_1}(\vec{x}),\dots,u_{i_n}(\vec{x}))$ by the new term $u'(x_0,\dots,x_{k-1})$.  Repeating this evaluation strategy eventually leads to an element of $T_{\rm BFB}$.  
\end{proof}

Of course, $T_{\rm BFB}$ and $\Sigma_{\rm BFB}$ depend on both $\mathscr{K}$ and $k$, and we write $T_{\rm BFB}(\mathscr{K},k)$ and $\Sigma_{\rm BFB}(\mathscr{K},k)$ when we wish to make this clear.

\subsection{First order reductions}
We now recall the definition of a first-order reduction; see \cite{imm} or \cite{lib} for a more complete treatment.  In the standard definition for computational complexity we are allowed recourse to a strict linear order $<$ and work entirely within relational signatures.  Every operation of arity $n$ can be considered as a relation of arity $n+1$ using its graph.

For $n\in {\mathbb N}\cup\{0\}$, purely relational signatures $\mathcal R$ and $\mathcal S$ and variables $\vec{x}=x_0,\dots, x_{n-1}$, a first-order ${\mathcal R \cup \{<\}}$-formula $\varphi(\vec{x})$ determines a 
family of $n$-ary relations on any ${\mathcal R \cup \{<\}}$-structure $\mathbb B$ in the following way. First, the solution set of~$\varphi(\vec{x})$ is an $n$-ary relation.  Second, for any
fundamental relation $s\in \mathcal S$ of arity $k$, a $kn$-ary formula  $\psi_s(\vec{x}_1,\dots,\vec{x}_k)$
(where $\vec{x}_i$ denotes $x_{i,0}, \dots , x_{i,n-1}$) in the language of ${\mathcal R \cup \{<\}}$ determines a $k$-ary relation on $n$-tuples: a $k$-ary relation on $B^n$. 
The family of formul{\ae}
\[
\{\varphi(\vec{x})\}\cup\{\psi_s(\vec{x}_1,\dots,\vec{x}_k)\mid s\in\mathcal S\}
\]
defines an ${\mathcal S}$-structure $\mu(\mathbb B)$, whose universe $U$ is the solution set of $\varphi(\vec{x})$, as a subset of $B^n$, and where the relations on $\mu(\mathbb B)$ are the restriction to $U$ of the solution sets of $\psi_{s}(\vec{x_1},\dots,\vec{x_k})$, for each $s\in\mathcal S$. These formul{\ae} form an $n$-ary \emph{first order query}.

We will also explore a one-to-many variation of this idea.  In this formulation we have a first order query, as just defined but allow some additional free variables $y_1,\dots,y_k$ in all formul{\ae}.  Each evaluation of these variables in some structure $\mathbb{A}$, say $(y_1,\dots,y_k)\mapsto(a_1,\dots,a_k)$, will give rise to a first order query with parameters $a_1,\dots,a_k$.  The intention is that we have a reduction from problem $\mathcal{P}$ to $\mathcal{Q}$, where structure $\mathbb{A}$ has a  property $\mathcal{P}$ if and only if  every evaluation $(y_1,\dots,y_k)\mapsto (a_1,\dots,a_k)$ in the universe $A$ has property $\mathcal{Q}$.  In particular, we observe that if property $\mathcal{Q}$ is first order, then so also is property $\mathcal{P}$, as we are simply asserting that the structure defined by the first order query has first order property~$\mathcal{Q}$ for all choices of $a_1,\dots,a_k$.

\section{Toolkit: principal congruences and subdirect irreducibility}\label{sec:si}
In this first toolkit section we consider the precise complexity of several computational problems relating to the subdirect decomposition of algebras.  Previous contributions due to Bergman and Slutzki \cite{berslu00,berslu00b,berslu02} give much of the general picture, though we solve one open problem in their work, and find improved complexity for a range of important classes of interest.
The notion of definable completely meet irreducible congruences (introduced in Definition~\ref{defn:cmi}) and their subsequent properties are central to results in Section~\ref{sec:FO} and~\ref{sec:comp}.

Recall that an algebra is \emph{subdirectly irreducible} if there is a pair of distinct elements $a\neq b$ that is contained in the principal congruence of all pairs.  Equivalently, there is a unique minimum nontrivial congruence: the \emph{monolith}.  As every algebra is a subdirect product of its subdirectly irreducible quotients,  we obtain a reduction from $\varmem(*,{\bf B})$ to the problem $\varsimem(*,{\bf B})$.  This reduction can be performed in polynomial time \cite{DDK}, though the authors are unaware of any finer level classification of the complexity of this process.  We explore related ideas, presenting one specific case where the subdirectly irreducible quotients may be created as first order queries.

We will be interested in the following computational problems on finite algebras.

\fbox{\parbox{0.9\textwidth}{\noindent\textsf{Subdirect irreducibility}\\
Instance: a finite algebra ${\bf A}$ of finite signature.\\
Question: is ${\bf A}$ subdirectly irreducible?}}

\fbox{\parbox{0.9\textwidth}{\noindent\textsf{Principal congruence membership}\\
Instance: a finite algebra ${\bf A}$ of finite signature and two unordered pairs $\{a,b\}$ and $\{c,d\}$ taken from $A$.\\
Question: is $(a,b)$ in the principal congruence of ${\bf A}$ generated by $(c,d)$?}}
\medskip

The principal congruence generated by a pair $(c,d)$ is often denoted $\Cg(c,d)$, so that the question in \textsf{principal congruence membership} can be written ``is $(a,b)\in \Cg(c,d)$?''

\fbox{\parbox{0.9\textwidth}{\noindent\congclass\\
Instance: A pair $({\bf A},C)$, where ${\bf A}$ is an algebra and $C\subseteq A$.\\
Question: Is there a congruence $\theta$ on ${\bf A}$ in which $C$ is a congruence class?}}
\medskip

Before we state our results we recall the definition of a (directed) graph algebra; see Kelarev~\cite{kel} for example.
\begin{defn}
Let $\mathbb{G}=\langle G,E\rangle$ be a directed graph; in other words $E$ is a binary relation on the set $G$.  The \emph{graph algebra} of $\mathbb{G}$ is the algebra on the disjoint union $G\cup\{0\}$ with a single binary operation $\cdot$ defined by 
\[
u\cdot v=\begin{cases} v&\text{ if $(u,v)\in E$}\\
0&\text{ otherwise.}
\end{cases}
\]
\end{defn}
The following theorem is mostly due to Bergman and Slutzki, though item 3 solves positively the problem stated at \cite[p.598]{berslu02}, and therefore negatively the problem stated at \cite[p.603]{berslu02}. 
\begin{thm}\label{thm:sipcm}
In any finite algebraic signature containing an operation of arity at least $2$\up:
\begin{enumerate}
\item \up(\cite{berslu02}\up) \textsf{Principal congruence membership} is \texttt{NL}-complete.
\item \up(\cite{berslu02}\up) \textsf{Subdirect irreducibility} is \texttt{NL}-complete.
\item \congclass\ is \texttt{NL}-complete.
\end{enumerate}
\end{thm}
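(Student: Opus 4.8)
The plan is to reduce all three problems to directed reachability, the prototypical \NL-complete problem, and to use Immerman--Szelepcs\'enyi closure (so that $\NL=\text{co}\NL$) freely. The common tool is Mal'cev's description of principal congruences: $(a,b)\in\Cg(c,d)$ iff there is a chain $a=z_0,z_1,\dots,z_n=b$ with $\{z_{i-1},z_i\}=\{p_i(c),p_i(d)\}$ for unary polynomials $p_i$. Since every unary polynomial is a composition of elementary translations $x\mapsto f(\bar a,x,\bar a')$ (one fundamental operation, constants elsewhere), the set $\{(p(c),p(d)):p\text{ unary}\}$ is exactly the set of vertices reachable from $(c,d)$ in the digraph on $A\times A$ whose edges apply a single elementary translation simultaneously to both coordinates. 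Building this digraph is a first order construction (the signature is fixed and finite), so reachability in it is in \NL. This yields the edge relation of an undirected graph $\Gamma$ on $A$, and Mal'cev's lemma says $\Cg(c,d)$ is precisely the connectivity relation of $\Gamma$. Testing $(a,b)\in\Cg(c,d)$ is therefore reachability in $\Gamma$ whose edges are themselves verified by \NL\ subroutines; as \NL\ is closed under composition, this is in \NL, giving item (1). (Item (1) is due to Bergman--Slutzki; the above is merely the route I would take.)

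For item (2), recall that $\mathbf A$ is subdirectly irreducible iff $\exists a\neq b\,\forall c\neq d:(a,b)\in\Cg(c,d)$. I would guess the monolith witness $(a,b)$ nondeterministically, then verify the universal clause as $\neg\exists (c,d):(a,b)\notin\Cg(c,d)$. Since principal congruence membership is in \NL, non-membership is in $\text{co}\NL=\NL$, so the inner existential is \NL\ and its negation is again \NL; the outer guess keeps everything in \NL. Hardness for both (1) and (2) I would obtain by a logspace reduction from $s$--$t$ reachability, encoding a digraph as a graph algebra so that $t$ is reachable from $s$ exactly when a designated pair lies in a designated principal congruence (respectively, when the algebra becomes subdirectly irreducible); this is the Bergman--Slutzki construction.

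The genuinely new item is (3). The structural key is the standard fact that a nonempty $C\subseteq A$ is a congruence class of some congruence on $\mathbf A$ iff it is a congruence class of the least congruence $\psi:=\Cg(C\times C)$ collapsing $C$: any $\theta$ having $C$ as a class contains $C\times C$, hence contains $\psi$, forcing the $\psi$-class of any $c\in C$ to lie between $C$ and the $\theta$-class $C$, so to equal $C$. Thus, fixing any $c_0\in C$, the answer is ``yes'' iff no element outside $C$ is $\psi$-related to $c_0$, i.e. iff $\forall a\in A\setminus C:(a,c_0)\notin\Cg(C\times C)$. Membership in $\Cg(C\times C)$ is the multi-generator version of item (1): the same two-level reachability works verbatim once the start set of the pair-digraph is taken to be $C\times C$ rather than a single pair. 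Hence the negation ``$\exists a\notin C:(a,c_0)\in\Cg(C\times C)$'' is in \NL, so \congclass\ lands in $\text{co}\NL=\NL$. For \NL-hardness I would again reduce from reachability, arranging the algebra and $C$ so that an outside element is dragged into the class of $c_0$ precisely when the target vertex is reachable.

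The one genuinely delicate point throughout is that logarithmic space cannot store a Mal'cev chain, nor the intermediate translates building each $p_i$. The resolution, and the step I would be most careful with, is the two-level reachability: the inner level reconstructs a single translate $(p(c),p(d))$ as a path in the pair-digraph, absorbing the unbounded length of $p$ into reachability, while the outer level handles transitivity as connectivity in $\Gamma$, absorbing the unbounded chain length. Making this composition rigorous relies on closure of \NL\ under composition and under complement; the latter (Immerman--Szelepcs\'enyi) is exactly what lets the universal quantifiers in items (2) and (3) be absorbed without leaving \NL.
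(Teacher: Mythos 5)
Your membership arguments are sound (the two-level reachability via Mal'cev chains for items (1) and (3), and the use of $\mathrm{NL}=\mathrm{co}\mathrm{NL}$ to absorb the quantifier alternation in item (2) all work), but they are not where the content of this theorem lies: items (1), (2), and the fact that \congclass\ is \emph{in} \texttt{NL} are all already in Bergman--Slutzki, and the paper simply cites them. The sole new claim is the \texttt{NL}-\emph{hardness} of \congclass, which resolves an open problem from Bergman--Slutzki's paper --- and this is exactly the point where your proposal stops at a promissory note ("arranging the algebra and $C$ so that an outside element is dragged into the class of $c_0$ precisely when the target vertex is reachable"). No construction is given, and producing one is the genuinely nontrivial step: you must exhibit a uniformly (logspace-) constructible algebra in which being a congruence class is controlled by reachability, and then verify both directions of the equivalence, including a description of \emph{all} blocks of the relevant principal congruence in the negative case.

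For comparison, the paper's construction is: given a digraph $\mathbb{G}=\langle V;E\rangle$ with distinguished $u,v$, form $\mathbb{G}_{a,b}$ on $V\cup\{a,b\}$ by adding edges $(a,u)$, $(v,a)$, $(v,b)$, and take the graph algebra of $\mathbb{G}_{a,b}$ with $C=\{a,b\}$. If $v$ is reachable from $u$, then from $a\equiv b$ one multiplies along the path and by $b$ to get $b\equiv 0$, so $\{a,b\}$ is not a class; if $v$ is not reachable, one checks that $\Cg(a,b)$ has blocks exactly $\{a,b\}$, $U\cup\{0\}$ (where $U$ is the set of vertices nontrivially reachable from $u$), and singletons, so $\{a,b\}$ is a class. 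Thus the reduction is from \emph{non}-reachability (harmless, since $\mathrm{NL}=\mathrm{co}\mathrm{NL}$), and it uses your observation that $C$ is a class of some congruence iff it is a class of $\Cg(C\times C)$ --- here $\Cg(a,b)$. Note also that the theorem is stated for \emph{every} signature with an operation of arity at least $2$; the paper handles this by interpreting the graph-algebra product as $f(a_1,a_2,\dots):=a_1\cdot a_2$ and making all other operations projections, a point your proposal does not address.
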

\begin{proof}
All three statements are proved by reduction from the undirected graph reachability problem.  Items 1 and 2 are \cite[Theorem 3.5]{berslu02} (which also considers the problem of recognising simplicity for a finite algebra; we have no use for this here though).  
We consider item 3.  
It is already shown in \cite{berslu02} that \congclass\ is in  \texttt{NL}.  
We need to show hardness, reducing from the \texttt{NL}-complete problem of st-connectivity, also known as directed graph reachability, and noting that $\text{co-}\texttt{NL}=\texttt{NL}$ by the Immerman–Szelepcs\'{e}nyi theorem~\cite{imm}.

Consider a finite directed graph $\mathbb{G}=\langle V;E\rangle$ with distinguished vertices $u,v$.  Let $a,b,0$ be symbols not in $V$ and construct the graph $\mathbb{G}_{a,b}$ on $V\cup\{a,b\}$ with edges consisting of $E$ along with $(a,u)$, $(v,a)$ and $(v,b)$.  We claim that in the graph algebra of $\mathbb{G}_{a,b}$, the subset $\{a,b\}$ is a congruence class if and only if $v$ is not reachable from $u$.  Clearly $\{a,b\}$ is a congruence class if and only if it is the congruence class of the congruence $\Cg(a,b)$, thus it suffices to show that $\{a,b\}$ is a congruence class of $\Cg(a,b)$ if and only if $v$ is not reachable from $u$.

Assume $v$ is reachable from $u$.  So $v$ is also reachable from $a$, and then $b$ is reachable from $a$; let $a=a_0,u=a_1,a_2,\dots,a_n=v$ be a sequence of vertices leading from $a$ to $v$.  So $(\dots ((aa_1)a_2)\dots a_n)b=vb=b$ and then $a\equiv b$ implies $b\equiv (\dots ((ba_1)a_2)\dots a_n)b=0$.  So $\{a,b\}$ is not a congruence class.  

Now assume that $v$ is not reachable from $u$.  Let $U$ denote the subset of $V\cup\{a,b\}$ reachable by a nontrivial path from $u$, and observe that $a,b,v\notin U$.  It is now routine to show that $\Cg(a,b)$ consists of the following blocks: $\{a,b\}$, $U\cup\{0\}$ and all singletons from $V\backslash U$.

This proof concerns only the case where there is a single binary operation, but the argument can be transformed trivially into any signature containing at least one operation $f$ of arity $\geq 2$ by defining $f$ by $f(a_1,a_2,\dots)\coloneqq a_1\cdot a_2$, where $\cdot$ is the operation, and letting all other operations be projections.
\end{proof}

As we shortly demonstrate, a wide range of algebraic settings enjoy a weak congruence condition that improves on Theorem \ref{thm:sipcm}.  Let $T_x$ be the set of all terms in an algebraic signature in variables $\{x,z_1,z_2,\dots\}$, and in which the variable $x$ explicitly occurs, and let $F$ be a subset of $T_x$.  For an algebra ${\bf A}$ and $a,b,c,d\in A$ we write $\{c,d\}\collapse^1_F\{a,b\}$ if $a=b$ or there is a term $t(x,z_1,z_2,\dots) \in F$ and elements $e_0,e_1,\dots\in A$ such that 
\[
\{t^{\bf A}(c,e_0,e_1,\dots),t^{\bf A}(d,e_0,e_1,\dots)\}=\{a,b\}.
\]  
Define  $\{c,d\}\collapse^{n}_F\{a,b\}$ if there is a sequence $a=a_0,a_1,a_2,\dots,a_n=b$ with $\{c,d\}\collapse_F^1\{a_i,a_{i+1}\}$ for each $i=0,\dots,n-1$.  Finally, write $\{c,d\}\collapse_F\{a,b\}$ if $\{c,d\}\collapse_F^k\{a,b\}$ for some $k$.  
(This notation is based on Baker, McNulty and Wang~\cite{BMW}, see also the book \cite[p.~25]{ALV}.)  Maltsev proved that a pair $(a,b)\in \Cg(c,d)$ if and only if $\{a,b\}\collapse_{T_x}\{c,d\}$.  
A class has \emph{term finite principal congruences} (TFPC)~\cite{CDFJ}  if there a finite set $F\subseteq T_x$ such that $\collapse_{T_x}$ coincides with $\collapse_{F}$.  A class has \emph{definable principal congruences} (DPC) if there exists $n$ and a finite subset $F\subseteq T_x$ with ${\collapse_{T_x}}={\collapse_F^n}$.  
The DPC property is rather special, but as explained in~\cite{CDFJ}, many familiar classes of algebraic structures have the~TFPC property, including semigroups, groups, rings and lattices.  

The following generalised version of \textsf{principal congruence membership} is useful before we state and prove the theorem.

\fbox{\parbox{0.9\textwidth}{\noindent\textsf{Congruence membership}\\
Instance: a finite algebra ${\bf A}$ of finite signature, a subset $S\subseteq A^2$ and an unordered pair $\{a,b\}$.\\
Question: is $(a,b)$ in the  congruence of ${\bf A}$ generated by $S$?}}
\medskip

\begin{thm}\label{thm:TFPC}
Let $\mathscr{V}$ be a variety of finite signature and with TFPC.  
\begin{enumerate}
\item[(0)] For any finite algebra ${\bf A}$ the \textsf{congruence membership} problem is solvable in~\texttt{L}.
\end{enumerate}  
As a consequence, each of the following problems are in \texttt{L}\up:
\begin{enumerate}
\item \textsf{Principal congruence membership}\up;
\item \textsf{Subdirect irreducibility}\up;
\item \congclass.
\end{enumerate}
\end{thm}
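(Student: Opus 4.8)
The plan is to prove (0) first and then obtain (1)--(3) as log-space reductions to it. The engine behind (0) is Maltsev's description of congruence generation, pushed through the TFPC hypothesis. Fix the finite set $F\subseteq T_x$ witnessing TFPC for $\mathscr{V}$; this set depends only on $\mathscr{V}$ and not on the input. Writing $\Cg(S)$ as the join $\bigvee_{(c,d)\in S}\Cg(c,d)$ and unfolding each principal congruence by Maltsev's lemma, one sees that $(a,b)\in\Cg(S)$ if and only if there is a chain $a=a_0,a_1,\dots,a_n=b$ in which every consecutive pair satisfies $\{c,d\}\collapse^1_F\{a_i,a_{i+1}\}$ for some generating pair $(c,d)\in S$; it is precisely the TFPC replacement of $T_x$ by $F$ that makes each step a single application of one of the finitely many fixed terms. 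Consequently, if I build the undirected graph $H$ on vertex set $A$ with an edge $\{p,q\}$ exactly when there is $(c,d)\in S$, a term $t\in F$ and a parameter tuple $\bar e$ with $\{t^{\bf A}(c,\bar e),t^{\bf A}(d,\bar e)\}=\{p,q\}$, then $(a,b)\in\Cg(S)$ is equivalent to $b$ being reachable from $a$ in $H$. I would then invoke Reingold's theorem that undirected $st$-reachability lies in $\Ll$.

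The crucial, genuinely load-bearing point is that the edge relation of $H$ is computable in $\Ll$, so that $({\bf A},S)\mapsto(H,a,b)$ is a log-space reduction whose composition with Reingold's algorithm stays in $\Ll$. This is where the finiteness of $F$ is essential, separating the present $\Ll$ bound from the $\NL$ bound of Theorem~\ref{thm:sipcm}. To test whether $\{p,q\}$ is an edge, one loops over $(c,d)\in S$, over the constantly many $t\in F$, and over tuples $\bar e\in A^{r}$, where $r$ is the bounded (since $F$ is finite) number of auxiliary variables in $t$; each loop needs only an $O(\log|A|)$ counter. For each choice, evaluating the fixed terms $t^{\bf A}(c,\bar e)$ and $t^{\bf A}(d,\bar e)$ is a straight-line computation with a constant number of $A$-valued registers, filled by table lookups into the input, hence in $\Ll$; comparing the resulting unordered pair with $\{p,q\}$ is trivial. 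Thus the whole edge test runs in logarithmic space, giving (0).

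For the consequences I would argue as follows. Part (1) is the instance of (0) with $S=\{(c,d)\}$. For (2), recall that ${\bf A}$ is subdirectly irreducible exactly when there is a pair $a\neq b$ with $(a,b)\in\Cg(c,d)$ for every pair $c\neq d$; this is the statement $\exists (a,b)\,\forall (c,d)\,[(a,b)\in\Cg(c,d)]$, whose matrix is a principal-congruence-membership query decided in $\Ll$ by (1). Both quantifiers range over the $O(|A|^2)$ pairs from $A$, so I realise them as nested deterministic loops governed by $O(\log|A|)$ counters, each iteration calling the $\Ll$ subroutine with reused workspace. For (3), I first reduce membership-as-a-class to a single congruence: $C$ is a class of some congruence if and only if it is a class of $\theta:=\Cg(C\times C)$. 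Indeed, if $C$ is a class of some $\theta'$ then $C\times C\subseteq\theta'$ forces $\theta\subseteq\theta'$, so the $\theta$-class of any $c_0\in C$ lies inside the $\theta'$-class $C$ while visibly containing $C$, whence it equals $C$. It then suffices, fixing any $c_0\in C$, to verify $C\neq\emptyset$ and that $(p,c_0)\notin\theta$ for every $p\in A\comp C$; each such test $(p,c_0)\in\Cg(C\times C)$ is a congruence-membership query with generating set $C\times C$ (supplied by a log-space view of the input $C$), in $\Ll$ by (0), and the outer loop over $p$ is again a deterministic $O(\log|A|)$-counter loop.

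The main obstacle is not the logic of the reductions but keeping everything inside $\Ll$ rather than merely $\NL$ or $\Poly$, which rests on two points I would be careful to justify. First, the finiteness of $F$ bounds the parameter tuples $\bar e$ to constant length, so the existential search defining an edge of $H$ uses only $O(\log|A|)$ space; without TFPC one would be quantifying over unboundedly large terms and land in $\NL$ at best. Second, the nested bounded quantifiers in (2) and (3) must be implemented as deterministic loops with reused logarithmic workspace rather than as nondeterministic guesses, so that closure of $\Ll$ under such bounded deterministic iteration, together with Reingold's theorem, carries the $\Ll$ bound through the composition.
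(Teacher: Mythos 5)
Your proposal is correct and takes essentially the same route as the paper's proof: TFPC supplies the fixed finite set $F$, congruence membership becomes undirected reachability in the graph of one-step $F$-collapses from pairs in $S$ (whose edge relation is logspace-computable, first-order in fact), Reingold's theorem gives (0), and (1)--(3) follow by the same reductions, including the $\Cg(C\times C)$ trick for \congclass. The only cosmetic difference is that the paper packages the outer loops of (1)--(3) as oracle calls using $\texttt{L}^{\texttt{L}}=\texttt{L}$, whereas you inline them as nested deterministic loops with reused logarithmic workspace; these are the same argument.
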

\begin{proof}[Proof of Theorem \ref{thm:TFPC}.]
We first explain why the solvability of (1), (2) and (3) in~\texttt{L} follows from the logspace solvability of (0) almost trivially: in each case we can solve in logspace using an oracle for (0), and as $\texttt{L}^\texttt{L}=\texttt{L}$ we obtain membership in~$\texttt{L}$.  Note that space considerations still only concern the working tape, and not the oracle tape, so that we do not need to concern ourselves with the size of any constructed $S$ written to the oracle, only with the working tape required to perform any calculations needed for that writing; see \cite[Section~6.2]{BDJN} for detailed discussion on this.

Item (1) is a specific case of (0), where $S=\{(c,d)\}$.  

For item (2), subdirect irreducibility can be decided in \texttt{L} by  systematically enumerating (one-by-one, re-using the same piece of work tape) all pairs $a\neq b$, and in each case systematically enumerating (one-by-one, reusing some new piece of work tape) all $c\neq d$, testing for satisfaction of $(a,b)\in\Cg(c,d)$ using the oracle.  Subdirect irreducibility holds if there is a choice of $(a,b)$ such that every pair $c\neq d$ satisfies $(a,b)\in\Cg(c,d)$.

For item (3), observe that a subset $C\subseteq A$ is a congruence class of a congruence if and only if it is a congruence class of the congruence $\Cg(S)$, where $S=C^2\subseteq A^2$.  
Because $C$ is trivially a subset of a congruence class of $\Cg(C^2)$, we can decide equality with a congruence class, by selecting an arbitrary $c\in C$, and then systematically verifying $(d,c)\notin \Cg(C^2)$ using the oracle for (0), for each $d\notin C$.  
Again we have reduced to a special case of (0), where $S=C^2$.

To prove (0), let $F$ be a finite subset of $T_x$ that determines principal congruences in $\mathscr{V}$, let ${\bf A}$ be a finite member of $\mathscr{V}$ and $S\subseteq A^2$.  
For any given pair $(a,b)\in A^2$, if $a=b$ then $(a,b)\in \Cg(S)$ trivially, so we assume that $a\neq b$.  Then $(a,b)\in\Cg(S)$ if and only if $a$ is reachable from $b$ in the undirected graph on $A$ whose undirected edges are $\{\{u,v\}\in A^2\mid S\collapse^1_F\{u,v\}\}$.  
As $F$ is finite, the relation $\{c,d\}\collapse^1_F\{u,v\}$ is a first order formula for each $c\neq d$ with $(c,d)\in S$.  
The relation $S\collapse^1_F\{u,v\}$ is simply the disjunction of the formul{\ae} $\{c,d\}\collapse_F^1\{u,v\}$ over all pairs $(c,d)\in S$, and hence is also a first order formula.  
Thus the edge relation of this graph is itself first order definable from the input subset $S$ and algebra ${\bf A}$, and so can be constructed in logspace.  
Thus (0) holds because undirected graph reachability can be solved in~\texttt{L}; see \cite{rei,rei2}.
\end{proof}

Theorem \ref{thm:TFPC} (2) shows that, under the TFPC assumption,  the promise of subdirect irreducibility in restricting $\varmem(*,{\bf B})$ to $\varsimem(*,{\bf B})$ can be verified in logspace.  
A more useful contribution would be if it gave rise to a logspace reduction from $\varmem(*,{\bf B})$ to $\varsimem(*,{\bf B})$, however it is not clear to the authors if this is true; even in nondeterministic logspace.  
\begin{question}\label{question:logspacesi}
Is there a reasonably applicable algebraic condition that guarantees the logspace construction of subdirectly irreducible quotients of a finite input algebra ${\bf B}$? 
\end{question}
It would be interesting if the TFPC condition was sufficient for example or some well known Maltsev condition.  
We will give two example solutions to Question~\ref{question:logspacesi}: in Proposition~\ref{pro:divisionordered} and in Lemma~\ref{lem:si}.  
These are strong solutions in the sense that the subdirectly irreducible quotients may be constructed as a first order query (and hence within deterministic logspace), thereby showing that $\varmem(*,{\bf B})$\ and $\varsimem(*,{\bf B})$ are in fact first order equivalent.  
Neither are ``classical'' universal algebraic properties however.
\begin{defn}\label{defn:cmi}
Let $\mathscr{K}$ be a class of  algebras of the same signature.  The class $\mathscr{K}$ is said to have \emph{definable completely meet irreducible congruences} \up(abbreviated to \emph{definable cmi congruences}\up) if there is a formula $\pi_{x,y}(u,v)$ with four free variables $x,y,u,v$ such that every ${\bf A}\in \mathscr{K}$ and every $a,b\in A$ with $a\neq b$, the binary relation $\{(c,d)\in A^2\mid {\bf A}\models \pi_{a,b}(c,d)\}$ defined by the formula $\pi_{a,b}(u,v)$ is a congruence that is maximal with respect to not containing the pair $(a,b)$.
\end{defn}
We remind the reader that the property that a congruence $\theta$ is maximal with respect to $(a,b)\notin\theta$ for some $a,b$ is equivalent to $\theta$ being completely meet irreducible in the congruence lattice.  When $K$ is a class of finite algebras, then the congruence lattice is finite, so complete meet irreducibility coincides with meet irreducibility, and so for the purposes of this paper, either notion would suffice.

In general there may not be a unique maximum congruence separating a pair $a\neq b$, but Definition \ref{defn:cmi} only requires that $\pi_{a,b}$ be a maximal congruence separating $a$ from $b$.

The following gives a one-to-many first order reduction from an algebra with definable cmi congruences to its subdirectly irreducible quotients.  This in turn provides a one-to-many first order reduction from $\varmem(*,{\bf B})$ to $\varsimem(*,{\bf B})$.
\begin{thm}\label{thm:fodecomp}
If $\mathscr{K}$ is a class of algebras of the same signature, closed under taking homomorphic images and with definable cmi congruences, then there is a first order query $\phi(x,y)$ with two free variables $x,y$, such that for every ${\bf A}\in \mathscr{K}$ and every $a\neq b$ in $A$ we have $\phi(a,b)$ defines a subdirectly irreducible quotient of ${\bf A}$ in which the monolith  is generated by representatives of the congruence classes of $a$ and $b$.
\end{thm}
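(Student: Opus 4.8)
The plan is to realise the quotient ${\bf A}/\theta_{a,b}$ directly as a first order query, where $\theta_{a,b}$ abbreviates the congruence $\{(c,d)\in A^2\mid {\bf A}\models\pi_{a,b}(c,d)\}$ furnished by the definable cmi hypothesis of Definition~\ref{defn:cmi}. The two distinguished free variables $x,y$ of that definition will play the role of the parameters of the query: fixing $a\neq b$ in place of $(x,y)$ makes $\pi_{a,b}(u,v)$ define a completely meet irreducible congruence that is maximal with respect to omitting the pair $(a,b)$. Since first order reductions have access to a strict linear order $<$, I can name a canonical representative of each $\theta_{a,b}$-class as its $<$-least member.

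Concretely, I would take the universe of the query to be the solution set of
\[
\varphi(x,y,u):=\forall w\,\big(\pi_{a,b}(u,w)\to u\le w\big),
\]
which picks out exactly one point, the least, from each $\theta_{a,b}$-class, and is therefore in bijection with the set of classes. For each fundamental operation $f$ of arity $n$, encoded by its graph relation $\graph_f$, I would define the corresponding graph on representatives by
\[
\psi_f(x,y,u_1,\dots,u_n,w):=\exists z\,\big(\graph_f(u_1,\dots,u_n,z)\wedge\pi_{a,b}(z,w)\big).
\]
Once the relations of the query are restricted to the $\varphi$-definable universe, $w$ ranges over canonical representatives, and $\psi_f$ asserts that $w$ represents the class of $f(u_1,\dots,u_n)$. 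Because $\theta_{a,b}$ is a congruence, this class is independent of the chosen representatives of the inputs and contains a unique representative, so $\psi_f$ is the graph of a total single-valued operation; the resulting structure is then isomorphic to ${\bf A}/\theta_{a,b}$ via the map sending a class to its least element.

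It then remains to identify this quotient. Subdirect irreducibility follows from the standard correspondence between completely meet irreducible congruences and subdirectly irreducible quotients: the monolith of ${\bf A}/\theta_{a,b}$ is the image of the unique upper cover of $\theta_{a,b}$ in $\Con({\bf A})$. Since $\theta_{a,b}$ is maximal among congruences omitting $(a,b)$, that cover is $\Cg\big(\theta_{a,b}\cup\{(a,b)\}\big)$, so the monolith is generated by the pair consisting of the classes of $a$ and $b$ --- equivalently, by their canonical representatives in the constructed structure --- which is precisely the asserted description. Closure of $\mathscr{K}$ under homomorphic images ensures ${\bf A}/\theta_{a,b}$ is again a member of $\mathscr{K}$, so that the query genuinely delivers instances for $\varsimem(*,{\bf B})$.

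I expect the main obstacle to be the interpretation bookkeeping rather than the algebra: one must verify carefully that restricting $\psi_f$ to the $\varphi$-definable universe yields a total and single-valued operation, and that the single query $\phi(x,y)$ works uniformly across all admissible parameter pairs $(a,b)$, with the $<$-least-element selection and the congruence property of $\theta_{a,b}$ doing this work simultaneously for every member of $\mathscr{K}$. Granting these routine checks, the subdirect irreducibility and monolith claims are immediate consequences of the cmi correspondence and of the maximality built into Definition~\ref{defn:cmi}.
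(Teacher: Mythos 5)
Your proposal is correct and follows essentially the same route as the paper's own proof: both use the implicit linear order of a first order query to carve out the transversal of $\le$-least representatives of the $\pi_{a,b}$-classes, define each operation on representatives via its graph composed with $\pi_{a,b}$ (well-defined by the congruence property), and then read off subdirect irreducibility and the monolith description from the maximality of $\pi_{a,b}$ with respect to omitting $(a,b)$. The only cosmetic difference is that you spell out the upper-cover argument $\Cg\bigl(\theta_{a,b}\cup\{(a,b)\}\bigr)$ for the monolith, which the paper leaves implicit.
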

\begin{proof}
Let $\pi_{x,y}(u,v)$ be a formula defining cmi congruences in $K$, and for each ${\bf A}\in K$ and each $a\neq b$ in $A$, let $\pi_{a,b}$ denote the cmi congruence defined by $\pi_{a,b}(u,v)$.  Our one-to-many reduction will  use an implicit linear order $\leq$ on the set $A$ (with $<$ being the corresponding strict order).   

We first observe that we may use $\leq$ to define the set of minimal representatives of the $\pi_{a,b}$-classes: $\{z\in A\mid \forall x (\pi_{a,b}(x,z)\Rightarrow z\leq x)\}$.  On this set we define each fundamental $n$-ary operation $f$ as follows:
\[
(f(c_1,\dots,c_n)\approx d)\leftrightarrow (\pi_{a,b}(f(c_1,\dots,c_n),d)).
\]
The operations are well defined because $\pi_{a,b}$ is a congruence relation, and as we have selected the domain as a transversal of the congruence classes, the defined algebra is a subdirectly irreducible quotient of ${\bf A}$ in which the representative members of the $\pi_{a,b}$ congruence classes of $a$ and $b$ generate the minimal congruence.
\end{proof}
Note that when there is a semilattice operation $\wedge$, we may avoid recourse to the implicit linear order $\leq$ by instead taking $\wedge$-minimum of each congruence class.  We deploy this approach in Section~\ref{sec:FO}.
\begin{cor}
If $\mathscr{V}$ has definable cmi congruences, by formula $\pi_{x,y}(u,v)$, then~$\mathscr{V}_{\rm si}$ is definable within $\mathscr{V}$ by the sentence 
\[
\exists x\exists y\forall u\forall v\ x\neq y\And (u\approx v\leftrightarrow \pi_{x,y}(u,v)).
\]
\end{cor}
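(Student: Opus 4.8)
The plan is to show that, for $\mathbf{A}\in\mathscr{V}$, the displayed sentence holds in $\mathbf{A}$ if and only if $\mathbf{A}$ is subdirectly irreducible. Writing $\Delta_{\mathbf{A}}=\{(c,c)\mid c\in A\}$ for the identity congruence, I would first observe that the sentence asserts exactly that there is a pair $a\neq b$ for which the relation defined by $\pi_{a,b}(u,v)$ equals $\Delta_{\mathbf{A}}$: the biconditional $u\approx v\leftrightarrow\pi_{a,b}(u,v)$ says precisely that $\pi_{a,b}(u,v)$ holds exactly when $u=v$. Two facts recalled after Definition~\ref{defn:cmi} do the rest of the work: a congruence is completely meet irreducible if and only if it is maximal with respect to excluding some pair of elements; and consequently $\mathbf{A}$ is subdirectly irreducible if and only if $\Delta_{\mathbf{A}}$ itself is completely meet irreducible, its unique upper cover then being the monolith.

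For the forward implication I would argue as follows. Suppose $\mathbf{A}$ satisfies the sentence, witnessed by some $a\neq b$. By the hypothesis that $\mathscr{V}$ has definable cmi congruences, the relation defined by $\pi_{a,b}(u,v)$ is a congruence that is maximal with respect to not containing $(a,b)$, hence completely meet irreducible. Since the sentence forces this congruence to coincide with $\Delta_{\mathbf{A}}$, the identity congruence is completely meet irreducible, and therefore $\mathbf{A}$ is subdirectly irreducible.

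For the converse, suppose $\mathbf{A}$ is subdirectly irreducible, let $\mu$ denote its monolith, and choose any $(a,b)\in\mu$ with $a\neq b$. The crux is to verify that $\Delta_{\mathbf{A}}$ is the \emph{only} congruence of $\mathbf{A}$ excluding $(a,b)$: if $\theta$ is a congruence with $(a,b)\notin\theta$, then $\theta$ cannot contain $\mu$, so $\theta$ is not a nontrivial congruence (every nontrivial congruence contains the monolith), which forces $\theta=\Delta_{\mathbf{A}}$. In particular $\Delta_{\mathbf{A}}$ is the unique maximal congruence excluding $(a,b)$. Since the definable cmi hypothesis guarantees that $\pi_{a,b}(u,v)$ defines some maximal congruence excluding $(a,b)$, that congruence must be $\Delta_{\mathbf{A}}$; equivalently $\pi_{a,b}(u,v)$ holds if and only if $u=v$, so the chosen $a\neq b$ witnesses the sentence.

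The only delicate point is this last step. The definable cmi property does not by itself say that $\pi_{a,b}$ equals the identity: a priori it could be any maximal congruence separating $a$ from $b$, and for an algebra that is not subdirectly irreducible there may be several such congruences. The resolution is exactly the uniqueness observation above, which is available only because the chosen pair lies in the monolith; once it is in hand, no further computation is needed.
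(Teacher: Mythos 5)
Your proof is correct and is exactly the argument the paper leaves implicit: the corollary is stated there without proof, being regarded as immediate from Definition~\ref{defn:cmi} together with the paper's explicit reminder that a congruence is completely meet irreducible if and only if it is maximal with respect to excluding some pair, applied to the identity congruence. Your treatment of the one delicate point --- choosing the witnessing pair inside the monolith so that the identity relation is the \emph{unique} congruence excluding it, which forces the maximal congruence defined by $\pi_{a,b}$ to coincide with it --- is precisely what the intended argument requires, so there is nothing to add.
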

We mention that subdirectly irreducible algebras are also first order definable in the presence of definable principal congruences, as subdirect irreducibility is equivalent to 
\[
\exists u\exists v \forall x\forall y\  x\not\approx y\rightarrow (u,v)\in \Cg(x,y).
\]

To complete the section we give an example property that guarantees definable cmi congruences, and therefore also the identification of subdirectly irreducible quotients as first order queries.  For any term $t(x,z_1,\dots)$
in $T_x$, we define the formula ${\divides_t}(x,y)$ by 
\[
\exists z_1\exists z_2\dots t(x,z_1,z_2,\dots)\approx y.
\]
If ${\bf A}$ is an algebra and $a,b\in A$ are such that ${\divides_t}(a,b)$ holds on ${\bf A}$, then we say that  $a$ \emph{divides} $b$ by way of $t$, and write $a\divides_t b$.  More generally, for a set of terms $F\subseteq T_x$ we similarly write $\divides_F$ to denote the possibly infinite disjunction  $\bigvee_{t\in F}\divides_t$.  We say that $a$ divides $b$ to mean that $a\divides_tb$ for some $t$, or equivalently if $a\divides_{T_x}b$ (restricting to those terms of $T_x$ explicit in $x$).  A class has \emph{finitely determined division} (FDD) if the relation $\divides$ is first order definable, or equivalently if there is a finite set of terms $F$ such that ${\divides}={\divides_F}$ (this equivalence is at the level of arbitrary algebraic structures, not finite algebraic structures; \cite[Theorem 5.1]{jactro10}).  As is shown in Example~4.3 of \cite{jactro10}, the FDD property properly generalises the TFPC property.

The relation $\divides$ is always preorder, but if it is an order relation on some algebra (or class of algebras), then we say that algebra (or class of algebras, respectively) is \emph{division ordered}.    
Theorem~7.3 of~\cite{jactro10}  shows that the finite division ordered algebras of a given signature  form a pseudovariety\footnote{Theorem~7.3 of \cite{jactro10} concerns the variety of an individual algebra, but the proof  depends only on showing that a quotient of a locally finite division ordered algebra is division ordered, and this is sufficient to deduce that the class of finite division ordered algebras is closed under taking quotients; it is trivially closed under the taking of finitary direct products and  subalgebras.} and contain a number of finite algebras with negative universal algebraic properties: Lyndon's algebra \cite{lyn} for example.   The restricted case of division ordered semigroups is particularly well studied and known as the $\mathscr{J}$-trivial semigroups (see Exercise 5.1.5 in Almeida \cite{alm}) and have also been an important source of challenging examples in semigroup varieties; see Gusev and Sapir~\cite{gussap} for one of many examples.    

We have framed the following result in the context of universal Horn classes, because the division order property is defined by quasiequations: $\{x\divides_t y\And y\divides_s x\rightarrow x\approx y\mid s,t\in T_x\}$.   As we shall shortly see (Lemma \ref{lem:ulf}(i) below, due to Gorbunov), the result also applies to pseudovarieties of division ordered finite algebras.  
\begin{pro}\label{pro:divisionordered}
Let $\mathscr{K}$ be a universal Horn class of algebras with TFPC.  If $\mathscr{K}$ is division ordered, then $\mathscr{K}$ has definable cmi congruences.
\end{pro}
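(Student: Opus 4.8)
The plan is to convert both hypotheses into first-order data and then build the separating congruence by a divisibility colouring. First, since $\mathscr{K}$ has TFPC it also has FDD (as recorded before the statement), so there is a finite $F\subseteq T_x$ with ${\divides}={\divides_F}$, and the division relation $u\divides_F v$ is first-order definable. Division-ordering makes $\divides$ a partial order, in particular antisymmetric. I would also note that TFPC makes the single-step collapse relation $\{u,v\}\collapse^1_F\{c,d\}$ first order, so the \emph{generators} of any principal congruence are first-order accessible, even though full reachability (and hence $\Cg$ itself) need not be; the role of division-ordering will be to tame this reachability.

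Next I would define a pivot. Given $a\neq b$, antisymmetry guarantees that at most one of $a\divides_F b$ and $b\divides_F a$ holds; choose the pivot $p$ to be $b$ when $b\divides_F a$ holds and $a$ otherwise. This choice is first-order definable from the free variables $x,y$ standing for $a,b$, and it is arranged precisely so that the non-pivot element does not divide $p$. The candidate formula is then the kernel of the divisibility predicate at $p$,
\[
\pi_{x,y}(u,v)\ :\equiv\ \bigl(u\divides_F p\ \leftrightarrow\ v\divides_F p\bigr),
\]
with $p$ replaced by its first-order pivot definition. By construction $a$ and $b$ receive different colours, since $p\divides_F p$ holds by reflexivity while the non-pivot element fails to divide $p$; hence $(a,b)\notin\pi_{a,b}$, giving the required separation, and definability is immediate.

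It remains to verify the two genuinely algebraic claims: that $\pi_{a,b}$ is a congruence and that it is maximal with respect to omitting $(a,b)$. For the congruence property I would show that single-step collapses respect the divisibility partition: applying a term $t\in F$ to an element $c$ yields an element with $c\divides t(c,\bar e)$, so polynomials move elements in a single direction along the division order, and antisymmetry prevents an image from crossing back over the boundary determined by $p$; combined with the Maltsev/collapse description of congruence generation, this should force the two colour classes to be congruence blocks. For maximality I would argue that any strictly larger congruence must identify two elements on opposite sides of the $p$-boundary, and then trace a collapse path, again using that $\divides$ is a partial order to control the propagation, to conclude that such an identification already forces $a\equiv b$.

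I expect the congruence and maximality verifications to be the main obstacle, and I am not fully certain the raw two-colour kernel above is a congruence for \emph{every} division-ordered TFPC class. The safe fallback is to take $\pi_{a,b}$ to be the largest congruence first-order definable inside this colouring (a bounded refinement built from $F$), with the real work being to show, via the interaction of antisymmetry with the single-step collapse relation, that this object is simultaneously a congruence and maximal. The division-ordering hypothesis is exactly what excludes the cyclic collapse behaviour that would otherwise obstruct a first-order description, since TFPC on its own yields only first-order single steps and not first-order reachability.
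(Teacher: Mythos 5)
Your opening moves (using TFPC to get a finite $F\subseteq T_x$ with ${\divides}={\divides_F}$ first order, and using antisymmetry to pick a definable pivot $p$ that the other element fails to divide) match the paper's setup, but your candidate formula is wrong, and the doubt you yourself express about it is justified: the two-colour kernel $u\divides_F p\leftrightarrow v\divides_F p$ need not be a congruence. Concretely, let $S$ be the commutative semigroup generated by $u,v,s,t$ in which every product of three or more generators equals $0$ and the only identification among the length-two products is $su=tv$. This $S$ is finite and division ordered (division strictly increases word length except trivially), and it lies in the universal Horn class of division-ordered semigroups, which has TFPC since semigroups do. Take $a=su=tv$ and $b=sv$. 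Since $b\not\divides a$, your pivot is $p=a=su$. Now $u$ and $v$ both divide $p$ (via the terms $zx$ with $z\mapsto s$ and $z\mapsto t$ respectively), so your relation makes $u$ and $v$ equivalent; but multiplying by $s$ sends $u$ to $su=p$, which divides $p$, and sends $v$ to $sv$, which divides only $sv$ and $0$ and hence not $p$. So the two colour classes are not congruence classes, and your formula does not define a congruence at all. The heuristic you offer in support (polynomials move elements one way along $\divides$, and antisymmetry prevents crossing back) proves only that $J_p=\{c\mid c\not\divides p\}$ is absorbing: if $c\not\divides p$ and $c\divides d$ then $d\not\divides p$ by transitivity. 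It gives no control over where elements of the \emph{other} class go, and the example shows two of them can land on opposite sides of the boundary.

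Your fallback -- ``the largest congruence first-order definable inside this colouring'' -- is in fact the correct object, but making it precise and proving its properties is the entire content of the paper's proof, all of which your proposal defers as ``the real work''. The largest congruence contained in your two-block partition is exactly the syntactic congruence $\sim_{J_p}$ of the set $J_p$, given by $u\sim_{J_p}v$ iff $t(u,\vec z)\in J_p\leftrightarrow t(v,\vec z)\in J_p$ for all $t\in F$ and all parameters $\vec z$. Three non-automatic facts are then needed, and each uses a hypothesis in an essential way: (i) that quantifying only over the finite set $F$ already yields a congruence, indeed the largest congruence saturating $J_p$ -- this is precisely what TFPC buys, via \cite[Corollary~2.4]{CDFJ}, and is what makes the relation simultaneously first order and a congruence; (ii) that $J_p$ is a single $\sim_{J_p}$-class (by absorbency), which yields the separation of $a$ from $b$; and (iii) maximality: any congruence $\theta$ strictly above $\sim_{J_p}$ must relate some $b'\notin J_p$ to some $a'\in J_p$, and writing $p=t(b',\vec c)$ with $t\in F$ (possible because $b'\divides p$ and $F$ witnesses FDD) gives $p=t(b',\vec c)\mathrel{\theta}t(a',\vec c)\in J_p$, which together with (ii) forces $\theta$ to collapse $a$ and $b$. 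In your proposal the primary construction is refuted by the example above, and the fallback restates the goal rather than proving it, so there is a genuine gap.
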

\begin{proof}
First note that Example~4.2 of \cite{jactro10} shows that the TFPC condition ensures that there is a finite set of terms $F\subseteq T_x$ such that $\mathscr{K}$ has TFPC and FDD by way of $F$. In particular $\divides$ coincides with $\divides_{F}$ and so is a first order definable relation.  We also note that there is no loss of generality in assuming that $x\in F$; this is a particular case of the General Shadowing Theorem~3.1 of \cite{CDFJ}.
As $\mathscr{K}$ is division ordered, either $a\not\divides b$ or $b\not\divides a$.  Assume that $a\not\divides b$ and define $J_b\coloneqq \{c\mid c\not\divides b\}$, noting that $a\in J_b$ and $b\notin J_b$.  As $\divides$ is first order definable in $\mathscr{K}$, so also is the unary relation $J_b$.  Now recall the \emph{syntactic congruence} of $\sim_{S}$ of a subset $S\subseteq A$, as defined by 
\[
u\sim_{S}v\Leftrightarrow \bigand_{t\in F}\forall \vec{z}\, t(u,\vec{z})\in S\leftrightarrow t(v,\vec{z})\in S.
\]
The TFPC condition with respect to $F$ ensures this is a congruence, and indeed the largest congruence in which $S$ is a union of congruence classes (see \cite[Corollary~2.4]{CDFJ} for the specific usage of TFPC, though the underlying idea of syntactic congruence is very well known in semigroup theoretic contexts and dates back at least as far as the 1954 paper of Pierce~\cite{pie}).  
Now the relation $\sim_S$ is a first order formula in the unary predicate $S$.  
As $J_b$ is a first order definable unary relation, it follows that $\sim_{J_b}$ is a first order definable congruence relation.  
We now claim that it is a maximal congruence separating $a$ from $b$.  
First observe that $J_b$ is a single congruence class of~$\sim_{J_b}$. 
In particular, as $a\in J_b$ we have that $a\not\sim_{j_b}b$.  
We claim that any congruence $\theta$ extending $\sim_{J_b}$ collapses $b$ to some element of $J_b$.  
As  $\sim_{J_b}$ is the maximum congruence with respect to not collapsing an element from $A\backslash J_b$ to an element of $J_b$, it follows that there is $b'\in A\backslash J_b$ and $a'\in J_b$ with $a'\mathrel{\theta}b'$.  
By the definition of $J_b$ there is a term $t(x,\vec{z})\in F$, and elements $\vec{c}$ from $A$ such that $t^{\bf A}(b',\vec{c})=b$.  Then $t^{\bf A}(b',\vec{c})$ is congruent to $t^{\bf A}(a',\vec{c})$ modulo $\theta$.  
But $t^{\bf A}(a',\vec{c})\in J_b$ as $a'\in J_b$ and $J_b$ is an absorbing ideal of ${\bf A}$.  
This shows that the first order relation $\sim_{J_b}$ is a maximal congruence with respect to not identifying $a$ and $b$; but we assumed $a\not\divides b$.  
As $\mathscr{K}$ is division ordered, either $a\not\divides b$ or $b\not\divides a$ and so $\mathscr{K}$ has definable cmi congruences by way of the universal formula 
\[
\pi_{x,y}(u,v)=\left(x\not\divides_{F}y\And u\sim_{J_y} v\right)\vee\left(y\not\divides_{F}x\And u\sim_{J_x} v\right).\qedhere
\]
\end{proof}
We mention the work of Schein~\cite{sch66}, where for example Theorem~3.7 (which concern semigroups with zero for which $\sim_{\{0\}}$ is the identity relation) or Theorems~4.5 and~4.6 (which concern a subclass of the $\mathscr{J}$-trivial semigroups) might serve as a motivation for the proof idea of Proposition~\ref{pro:divisionordered}, even if \cite{sch66} concerns the structure of subdirectly irreducible semigroups rather than the potential definability of subdirectly irreducible quotients.

\section{Toolkit: Preservation Theorems}\label{sec:preservation}
 In this section we provide some preservation theorems that hold at the finite level in algebraic signatures.  
We use some of these to provide information on tightness to the number of quantifier alternations in the results of Section~\ref{sec:FO}.
 
 In the class of relational structures, $\mathsf{HSP}$-classes are too restrictive to be of much interest.  However there is a rather elementary preservation theorem in universal first order logic that can be stated for $\mathsf{SP}$-classes for arbitrary finite structures (relational or algebraic), provided uniform local finiteness is assumed.  To begin with we recall some basic facts concerning classes generated by classes of finite structures.  
 In the following, $\Th_{\rm uH}(\mathscr{K})$ denotes the universal Horn theory of $\mathscr{K}$.
\begin{lem}\label{lem:ulf}
The following hold for any $\mathsf{SP}_{\rm fin}$-closed class $\mathscr{K}$ of finite structures in a finite signature.
\begin{enumerate}
\item $\mathscr{K}=\Mod_{\rm fin}(\Th_{\rm uH}(\mathscr{K}))$.
\item $\mathscr{K}$ is uniformly locally finite if and only if  $\Mod(\Th_{\rm uH}(\mathscr{K}))$ is locally finite.
\item $\mathscr{K}$ is uniformly locally finite if and only if the ultraproduct closure of $\mathscr{K}$ is locally finite.
\item If $\mathscr{K}$ is a uniformly locally finite class of algebras and is also closed under taking homomorphic images, then $\mathscr{K}$ is the finite part of $\mathsf{HSP}(\mathscr{K})$.
\end{enumerate}
\end{lem}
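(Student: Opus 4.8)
The plan is to treat part (1) as the substantive core and to derive (2)--(4) from it together with Birkhoff's Finite Basis Theorem~\ref{BFBT} and the elementary observation that each closure operator appearing in the statement produces a subclass of the variety $\mathsf{HSP}(\mathscr{K})$. In every part one inclusion is the trivial one, so the real content lies in the reverse inclusions, and I expect the diagram argument in (1) to be the main obstacle.

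For part (1), the inclusion $\mathscr{K}\subseteq \Mod_{\rm fin}(\Th_{\rm uH}(\mathscr{K}))$ is immediate, since each member of $\mathscr{K}$ is finite and, by definition, models every universal Horn sentence holding throughout $\mathscr{K}$. For the converse, given a finite $\mathbf{A}\models\Th_{\rm uH}(\mathscr{K})$ I would introduce a variable $x_c$ for each $c\in A$ and let $\Delta(\bar{x})$ be the (finite, since the signature and $A$ are finite) conjunction of all atomic facts true in $\mathbf{A}$: memberships of tuples in fundamental relations and all instances of the operation tables. The key point is that an assignment $x_c\mapsto g(c)$ satisfies $\Delta$ in some $\mathbf{B}\in\mathscr{K}$ exactly when $g$ is a homomorphism $\mathbf{A}\to\mathbf{B}$. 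If some pair $a\neq b$ could not be separated by any homomorphism into $\mathscr{K}$, then the quasi-identity $\forall\bar{x}\,(\Delta(\bar{x})\to x_a\approx x_b)$ would lie in $\Th_{\rm uH}(\mathscr{K})$ and, evaluated on $\mathbf{A}$ under the identity assignment, would force $a=b$, a contradiction; likewise, for each tuple $\bar{a}\notin R^{\mathbf{A}}$ the sentence $\forall\bar{x}\,(\Delta(\bar{x})\to R(x_{\bar{a}}))$ produces a homomorphism into $\mathscr{K}$ reflecting that negative atomic fact. Assembling one separating or reflecting homomorphism for each of the finitely many pairs and negative atomic facts into a single map realises $\mathbf{A}$ as an induced substructure of a finite product (possibly empty, yielding the trivial structure) of members of $\mathscr{K}$; closure under $\mathsf{P}_{\rm fin}$ and $\mathsf{S}$ then gives $\mathbf{A}\in\mathscr{K}$. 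The bookkeeping that separation of points \emph{plus} reflection of relations delivers a genuine strong embedding is where the care is needed.

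For parts (2) and (3), the reverse implications are trivial: $\mathscr{K}$ is a subclass both of $\Mod(\Th_{\rm uH}(\mathscr{K}))$ and of its ultraproduct closure, and uniform local finiteness passes to subclasses. For the forward implications I would invoke Birkhoff's Finite Basis Theorem~\ref{BFBT}: when $\mathscr{K}$ is uniformly locally finite in a finite signature, the $k$-generated free algebra of $\mathsf{HSP}(\mathscr{K})$ is finite for every $k$, so $\mathsf{HSP}(\mathscr{K})$ is itself uniformly locally finite. Since every equation true in $\mathscr{K}$ is a body-free universal Horn sentence, $\Mod(\Th_{\rm uH}(\mathscr{K}))\subseteq\mathsf{HSP}(\mathscr{K})$; and by {\L}o\'s's theorem every ultraproduct of members of $\mathscr{K}$ models the equational theory of $\mathscr{K}$, so the ultraproduct closure also sits inside $\mathsf{HSP}(\mathscr{K})$. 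Both classes therefore inherit uniform local finiteness.

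For part (4) the inclusion of $\mathscr{K}$ in the finite part of $\mathsf{HSP}(\mathscr{K})$ is immediate. For the converse, take a finite $\mathbf{A}\in\mathsf{HSP}(\mathscr{K})$, witnessed by a surjection $\phi\colon\mathbf{C}\onto\mathbf{A}$ with $\mathbf{C}\in\mathsf{SP}(\mathscr{K})$. Choosing preimages of a generating set of $\mathbf{A}$ and letting $\mathbf{C}_0$ be the subalgebra of $\mathbf{C}$ they generate, $\phi$ restricts to a surjection $\mathbf{C}_0\onto\mathbf{A}$; as $\mathbf{C}_0$ is finitely generated and lies in the uniformly locally finite class $\mathsf{HSP}(\mathscr{K})$ (via Birkhoff's Finite Basis Theorem again), it is finite, and being a subalgebra of a product of members of $\mathscr{K}$ it lies in $\mathsf{SP}_{\rm fin}(\mathscr{K})=\mathscr{K}$ after cutting the product down to the finitely many coordinates needed to separate its points. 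Then $\mathbf{A}\in\mathsf{H}(\mathscr{K})=\mathscr{K}$ by the hypothesis that $\mathscr{K}$ is closed under homomorphic images, completing the argument.
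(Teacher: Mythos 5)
Your proof is correct, and it is more self-contained than the paper's, which disposes of the hard parts by citation: part (1) is attributed to Gorbunov and part (2) to Maltsev, with no argument given. Your diagram argument for (1) --- separating each pair $a\neq b$ and reflecting each negative atomic fact by a homomorphism obtained from the failure of the corresponding Horn sentence in $\Th_{\rm uH}(\mathscr{K})$, then assembling these finitely many maps into an embedding into a finite product --- is exactly the classical argument behind the citation, and it is the substantive content of the lemma; note also that your parenthetical worry about the empty product is consistent with the paper's conventions, since the paper reserves $\mathsf{SP}_{\rm fin}^{+}$ for nonempty products, so plain $\mathsf{SP}_{\rm fin}$-closure already places the trivial structure in $\mathscr{K}$. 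For (3) the paper argues via part (2), using that the ultraproduct closure lies in the universal Horn class generated by $\mathscr{K}$, while you go through $\Th_{\rm eq}$, Birkhoff's HSP theorem and {\L}o\'s's theorem; both amount to trapping the ultraproduct closure inside a class already known to be uniformly locally finite. For (4) the paper generates the variety from its finitely generated free algebras (finite by uniform local finiteness, hence in $\mathsf{SP}_{\rm fin}(\mathscr{K})=\mathscr{K}$, with their quotients then captured by $\mathsf{H}$-closure), whereas you pull an arbitrary finite member of $\mathsf{HSP}(\mathscr{K})$ back through the $\mathsf{H}\mathsf{S}\mathsf{P}$ decomposition and cut the product down to finitely many separating coordinates; these are the same idea dressed differently. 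One small caveat: parts (2) and (3) are stated for structures, not just algebras, so your appeals to free algebras and to $\Mod(\Th_{\rm eq}(\mathscr{K}))=\mathsf{HSP}(\mathscr{K})$ should strictly be made for the functional reducts (generation ignores relations), after which your argument goes through unchanged.
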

\begin{proof}
The first part is due to Gorbunov \cite[Proposition 2.5.12]{gor}, while the second part is usually attributed to Maltsev.  
The ``only if'' statement of part (3) follows from part (2) because the ultraproduct closure of $\mathscr{K}$ is a subclass of the universal Horn class generated by $\mathscr{K}$.
For the ``if'' statement, we prove the contrapositive.  
Assume that $\mathscr{K}$ is not uniformly locally finite, so that there exists an $n\in\mathbb{N}$ such that for every $m\in\mathbb{N}$ there is an $n$-generated member ${\bf S}_m$ of $\mathscr{K}$ with more than $m$ elements.
Add constants $a_1,\dots,a_n$ to name the $n$ generators of each ${\bf S}_m$.  
For each $k\in\mathbb{N}$ there are, up to isomorphism, only finitely many $\{a_1,\dots,a_n\}$-generated structures of size up to $k$ and so we may create a variable-free sentence $\sigma_k$ (in the expanded signature including the constants $a_1,\dots,a_n$) that asserts that ${\bf S}_m$ is not amongst them, whenever $m\geq k$.
For example, $\sigma_k$ could be the negation of the disjunction of the diagrams of (representatives of the isomorphism classes) of all at most $k$-element structures generated by $a_1,\dots,a_k$; see \cite[Definition~8.35]{ALV}.
Then the ultraproduct of $\{{\bf S}_m\mid m\in \mathbb{N}\}$, over any nonprincipal ultrafilter on~$\mathbb{N}$, also satisfies $\sigma_k$, and as this is true for every $k$, it follows that the substructure of the ultraproduct generated by $a_1,\dots,a_n$ is infinite.
So, the ultraproduct closure of $\mathscr{K}$ is not locally finite, as required for the contrapositive of the ``if'' statement of (3).

For part (4), note that every variety is generated by its finitely generated free algebras, and all finite algebras are quotients of free algebras.  Uniform local finiteness ensures that finitely generated free algebras in the variety generated by~$\mathscr{K}$ are finite, and as~$\mathscr{K}$ is assumed to be closed under quotients, these free algebras, and their quotients, lie in $\mathscr{K}$.  In other words, $\mathscr{K}$ is the finite part of $\mathsf{HSP}(\mathscr{K})$.
\end{proof}

The following easy observation gives some partial preservation theorems at the finite level.
\begin{thm}\label{thm:Aequation}
Let $\Phi$ be a {universal} sentence defining a locally finite class.
\begin{enumerate}
\item If the finite models $\mathscr{K}=\Mod_{\rm fin}(\Phi)$ are
$\mathsf{SP}_{\rm fin}$-closed then $\Mod(\Phi)$ is a universal Horn class and $\Phi$ is logically equivalent to a universal Horn sentence, even amongst structures of possibly infinite size.
\item If the finite models $\mathscr{K}=\Mod_{\rm fin}(\Phi)$ are a pseudovariety \up($\mathsf{HSP}_{\rm fin}$-closed\up) then $\Mod(\Phi)$ is a variety and $\Phi$ is logically equivalent to a finite conjunction of equations, even amongst structures of possibly infinite size.
\end{enumerate}
\end{thm}
\begin{proof}
To prove (1), first observe that as $\mathscr{K}\subseteq \Mod(\Phi)$ and $\Mod(\Phi)$ is locally finite and ultra\-product-closed, it follows from Lemma \ref{lem:ulf} part (3) that $\mathscr{K}$ is uniformly locally finite.   
Then Lemma \ref{lem:ulf} part (2) implies that $\Mod(\Th_{\rm uH}(\mathscr{K}))$ is locally finite.  

Next observe that two locally finite universal classes agreeing on finite models coincide, because all models embed into an ultraproduct of their finitely generated substructures.  So $\Mod(\Th_{\rm uH}(\mathscr{K}))=\Mod(\Phi)$, and then the Compactness Theorem  ensures that there is a finite subset of $\Th_{\rm uH}(\mathscr{K})$ that is equivalent to $\Phi$.

The proof of (2) is essentially identical, using Lemma \ref{lem:ulf}(4) in place of~(3).
\end{proof}
The following theorem differs subtly from Theorem~\ref{thm:Aequation} in that there is no requirement that $\Mod(\Phi)$ be locally finite, only that $\Mod_{\rm fin}(\Phi)$ be uniformly locally finite.  
This is a much weaker assumption, and the conclusion is weaker in dropping the conclusion of equivalence on all structures.
We phrase it for algebraic signatures, though it can be adapted for general signatures.
\begin{thm}\label{thm:Aequation2}
Let $\Phi$ be a universal sentence in an algebraic signature whose finite models $\mathscr{K}=\Mod_{\rm fin}(\Phi)$ are $\mathsf{SP}_{\rm fin}^+$-closed and uniformly locally finite.
Then $\Phi$ is equivalent on finite algebras to a universal Horn sentence, and if $\mathscr{K}$ is a pseudovariety, then $\Phi$ is equivalent on finite structures to a finite conjunction of equations.
\end{thm}
\begin{proof}
Let $n$ be the number of variables appearing in $\Phi$.
Birkhoff's Finite Basis Theorem~\ref{BFBT} guarantees that the $n$-variable equational theory of $\mathscr{K}$ has a finite basis $\Sigma_{\rm BFB}$, and all $n$-variable terms reduce modulo $\Sigma_{\rm BFB}$ to one in the finite set $T_{\rm BFB}$.
Up to logical equivalence, there are only finitely many $n$-variable atomic formul{\ae} and only finitely many universal Horn sentences involving only terms from~$T_{\rm BFB}$.
Let $\Sigma$ be the conjunction of $\Sigma_{\rm BFB}$ along with the finitely many satisfied universal Horn sentences involving only terms $T_{\rm BFB}$ (and when $\mathscr{K}$ is a pseudovariety, it will suffice to take only $\Sigma_{\rm BFB}$).
The universal Horn sentence $\Sigma$ determines when an $n$-generated finite algebra lies in $\mathscr{K}$.
But as $\mathscr{K}=\Mod_{\rm fin}(\Phi)$ and $\Phi$ is an $n$-variable universal sentence, membership of a finite algebra ${\bf A}$ in $\mathscr{K}$ is equivalent to membership in $\mathscr{K}$ of the (at most) $n$-generated subalgebras of ${\bf A}$.
Thus $\Sigma$ is equivalent to~$\Phi$ on finite algebras.
\end{proof}

The following result is given for function-free signatures in \cite[Theorem 3]{gur2}, where it is attributed to Compton.  Recall that a \emph{hereditary class} means one that is closed under $\mathsf{S}$.  For an algebra ${\bf A}$ and subset $B\subseteq A$ we let $\sg{\bf A}{B}$ denote the subalgebra of ${\bf A}$ generated by $B$.
\begin{thm}\label{thm:forallexists}
If $\Phi$ is a $\forall^*\exists^*$-sentence defining a uniformly locally finite hereditary class $\mathscr{K}$ of finite structures 
then $\Phi$ is equivalent on finite structures to a universal sentence.
\end{thm}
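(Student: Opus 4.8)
The plan is to follow the template established in Theorems~\ref{thm:AuH} and~\ref{thm:Aequation}, but to work with the universal theory $\Th_\forall(\mathscr{K})$ (the set of universal sentences valid on $\mathscr{K}$) rather than the universal Horn theory, since a $\forall^*\exists^*$-sentence defining a hereditary class need not yield a product-closed class. The target is only equivalence \emph{on finite structures}, which is weaker than the conclusions of the two preceding theorems and should make the argument go through despite the loss of $\mathsf{P}$-closure.

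First I would observe that since $\mathscr{K}$ is hereditary and uniformly locally finite, and $\Phi$ is a $\forall^*\exists^*$-sentence with $\mathscr{K}=\Mod_{\rm fin}(\Phi)$, the class $\Mod_{\rm fin}(\Th_\forall(\mathscr{K}))$ is a uniformly locally finite universal class containing $\mathscr{K}$. The key containment to establish is the reverse: that every finite model of $\Th_\forall(\mathscr{K})$ already lies in $\mathscr{K}$, i.e.\ satisfies $\Phi$. The idea is that a finite structure $\mathbf{A}$ satisfying all universal consequences of $\mathscr{K}$ must embed into some member of $\mathscr{K}$ (or of its elementary/ultraproduct closure), and then the existential witnesses demanded by the $\forall^*\exists^*$-sentence $\Phi$ can be imported back into $\mathbf{A}$ using uniform local finiteness: the witnesses for any tuple can be taken inside the finitely-generated (hence uniformly bounded) substructure generated by that tuple, which already sits in $\mathscr{K}$ by heredity. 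This is exactly where the $\forall^*\exists^*$ quantifier shape and uniform local finiteness interact — the existential block only needs to range over a bounded generated subalgebra.

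The main steps, in order, are: (i) record that $\Mod(\Th_\forall(\mathscr{K}))$ is uniformly locally finite via Lemma~\ref{lem:ulf}(2)--(3)-style reasoning (the universal theory analogue, since $\mathscr{K}\subseteq\Mod(\Phi)$ and the latter is ultraproduct-closed and locally finite, forcing $\mathscr{K}$ uniformly locally finite and hence $\Mod(\Th_\forall(\mathscr{K}))$ uniformly locally finite as well); (ii) show $\Mod_{\rm fin}(\Th_\forall(\mathscr{K}))\subseteq\mathscr{K}$ by the embedding-plus-witness-localisation argument above; (iii) conclude $\mathscr{K}=\Mod_{\rm fin}(\Th_\forall(\mathscr{K}))$, so $\mathscr{K}$ is definable on finite structures by a universal theory; and (iv) apply the Compactness Theorem together with uniform local finiteness to extract a single universal sentence (a finite subset of $\Th_\forall(\mathscr{K})$) equivalent to $\Phi$ on finite structures, just as in Theorem~\ref{thm:AuH}.

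I expect the main obstacle to be step (ii), the proof that a finite model $\mathbf{A}$ of $\Th_\forall(\mathscr{K})$ satisfies $\Phi$. The delicate point is justifying that the existential witnesses for $\Phi$ can be found \emph{within} $\mathbf{A}$ rather than merely in some superstructure: one must argue that because $\mathbf{A}$ satisfies every universal consequence of $\mathscr{K}$, the quantifier-free ``diagram-type'' constraints on any bounded configuration of $\mathbf{A}$ are consistent with $\mathscr{K}$, and then use heredity and the uniform bound on generated substructures to realise the required witnesses locally. Making this precise will likely require care in matching the $\forall^*\exists^*$ syntactic form to the algebraic localisation — for instance, verifying that the relevant witness configuration generates a substructure of bounded size that embeds into a member of $\mathscr{K}$, so that the $\exists^*$ block is satisfiable there and the witnesses transfer back. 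Everything else reduces to standard locally-finite-universal-class bookkeeping and compactness.
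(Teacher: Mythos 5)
Your route is viable and genuinely different from the paper's. The paper proves Theorem~\ref{thm:forallexists} by an explicit ``localised Skolemisation'': writing $\Phi$ as $\forall\vec{x}\,\exists\vec{y}\,\phi(\vec{x},\vec{y})$, it invokes Birkhoff's Finite Basis Theorem~\ref{BFBT} to get a finite set $F$ of terms in $\vec{x}$ representing every term up to $\mathscr{K}$-equivalence, and then shows that $\Phi$ is equivalent on finite structures to the concrete universal sentence $\forall\vec{x}\bigvee_{(t_1,\dots,t_m)\in F^m}\phi(\vec{x},t_1(\vec{x}),\dots,t_m(\vec{x}))$: heredity places $\sg{{\bf A}}{\vec{a}}$ in $\mathscr{K}$, so witnesses for the $\exists$-block lie in $\sg{{\bf A}}{\vec{a}}$ and are therefore values of terms from $F$, and the quantifier-free matrix transfers between substructure and structure. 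Your proposal instead goes through $\Th_\forall(\mathscr{K})$ and compactness; it is non-constructive where the paper's argument hands you the sentence explicitly and never leaves the finite world, but it is closer to standard classical model theory and uses the same key mechanism (witnesses localised to a finitely generated substructure that heredity places inside $\mathscr{K}$).

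Two repairs to your assembly are needed, however. First, your step (ii) is both easier and slightly misstated. A finite model ${\bf A}$ of $\Th_\forall(\mathscr{K})$ embeds into an actual member of $\mathscr{K}$: since the signature is finite, ${\bf A}$ has a finite existential diagram sentence $\delta_{\bf A}$, and if no member of $\mathscr{K}$ satisfied $\delta_{\bf A}$ then $\neg\delta_{\bf A}$ would be a universal sentence in $\Th_\forall(\mathscr{K})$, violated by ${\bf A}$ itself. Heredity then gives ${\bf A}\in\mathscr{K}$ outright; no witness-importing, uniform local finiteness, or quantifier shape enters at this step. Note that, as written, your appeal to heredity is illegitimate: you apply it to substructures of an \emph{ultraproduct} of members of $\mathscr{K}$, but $\mathscr{K}$ consists of finite structures and the ultraproduct is not a member, so one must first pull the finite substructure down into a genuine member (via $\delta_{\bf A}$ and {\L}o\'s's theorem, or the diagram argument above). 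Second, and more importantly, the localisation argument you spend on step (ii) is genuinely needed at step (iv): compactness requires the entailment $\Th_\forall(\mathscr{K})\models\Phi$ over \emph{all} structures, including infinite ones, and this is not ``just as in Theorem~\ref{thm:AuH}'', because $\Mod(\Phi)$ is not a universal class, so the step ``two locally finite universal classes agreeing on finite models coincide'' does not apply. Instead argue: any ${\bf M}\models\Th_\forall(\mathscr{K})$ is locally finite by your step (i); each $\sg{{\bf M}}{\vec{a}}$ is then a finite model of $\Th_\forall(\mathscr{K})$, hence lies in $\mathscr{K}$ by step (ii), hence satisfies $\Phi$; and the quantifier-free matrix lifts the witnesses from $\sg{{\bf M}}{\vec{a}}$ to ${\bf M}$. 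With that entailment in hand, compactness yields a finite $\Sigma_0\subseteq\Th_\forall(\mathscr{K})$ with $\Sigma_0\models\Phi$, and then $\Mod_{\rm fin}(\Sigma_0)\subseteq\Mod_{\rm fin}(\Phi)=\mathscr{K}\subseteq\Mod_{\rm fin}(\Sigma_0)$ completes the proof.
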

\begin{proof}
The proof is by way of a sort of ``localised Skolemisation'' process.  Let $\Phi$ be 
\[
\forall x_1\forall x_2\dots\forall x_n\exists y_1\exists y_2\dots \exists y_m \phi(x_1,\dots,x_n,y_1,\dots,y_m).
\]
We first observe that uniform local finiteness allows us to apply Birkhoff's Finite Basis Theorem \ref{BFBT} to find a finite set $F$ of terms in variables $x_1,\dots,x_n$ such that every term $t(x_1,\dots,x_n)$ is equivalent in $\mathscr{K}$ to one from $F$; that is, for every $t(x_1,\dots,x_n)$ there exists $s\in F$ with $\mathscr{K}\models s\approx t$.  

We claim that $\Phi$ is equivalent to the  sentence $\Phi'=\forall x_1\dots\forall x_n\phi'(x_1,\dots,x_n)$ where
\[
\phi'(x_1,\dots,x_n)\coloneqq  \bigvee_{(t_1,\dots,t_m)\in F^m}\phi(x_1,\dots,x_n,t_1(x_1,\dots,x_n),\dots,t_m(x_1,\dots,x_n)).
\]
It is trivial that $\Phi'\vdash \Phi$, so that the finite models of $\Phi'$ are a subclass of $\mathscr{K}$.  Now assume that ${\bf A}\in\mathscr{K}$.  Consider any $a_1,\dots,a_n\in A$.  Now as $\sg{{\bf A}}{a_1,\dots,a_n}\models \Phi$, there are $b_1,\dots,b_m\in \sg{{\bf A}}{a_1,\dots,a_n}$ with $\sg{{\bf A}}{a_1,\dots,a_n}\models \phi(a_1,\dots,a_n,b_1,\dots,b_m)$.  As $b_1,\dots,b_m\in \sg{{\bf A}}{a_1,\dots,a_n}$ there are terms $t_1,\dots,t_m\in F$ with $b_i=t_i^{\bf A}(a_1,\dots,a_n)$ for $i=1,\dots,m$.  Then 
\[
\sg{{\bf A}}{a_1,\dots,a_n}\models \phi(a_1,\dots,a_n,t_1^{\bf A}(a_1,\dots,a_n),\dots,t_m^{\bf A}(a_1,\dots,a_n)),
\]
and hence $\sg{{\bf A}}{a_1,\dots,a_n}\models\phi'(a_1,\dots,a_n)$.  That is, ${\bf A}\models \phi'(a_1,\dots,a_n)$.  As $a_1,\dots,a_n$ were arbitrary, it follows that ${\bf A}\models \Phi'$.
\end{proof}
The following result also holds if ``pseudovariety'' is replace by ``$\mathsf{SP}_{\rm fin}^+$-closed class'', and ``finite conjunction of equations'' is replaced by ``universal Horn sentence''.
\begin{cor}\label{cor:compton}
If $\Phi$ is a $\forall^*\exists^*$ sentence defining a uniformly locally finite pseudovariety $\mathscr{K}$ of finite algebras, 
then $\Phi$ is equivalent on finite structures to a finite conjunction of equations.
\end{cor}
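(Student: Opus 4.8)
The plan is to combine Theorems~\ref{thm:forallexists} and~\ref{thm:Aequation}. A pseudovariety is by definition closed under $\mathsf{S}$, hence hereditary, so Theorem~\ref{thm:forallexists} applies directly: I first replace $\Phi$ by a universal sentence $\Psi$ agreeing with $\Phi$ on all finite structures, so that $\Mod_{\rm fin}(\Psi)=\Mod_{\rm fin}(\Phi)=\mathscr{K}$. Since $\Psi$ is now a universal sentence still defining the uniformly locally finite pseudovariety $\mathscr{K}$, Theorem~\ref{thm:Aequation} converts $\Psi$ into a logically equivalent (over all structures) finite conjunction of equations $\Sigma$. Chaining the two reductions yields $\Mod_{\rm fin}(\Phi)=\Mod_{\rm fin}(\Sigma)=\mathscr{K}$, which already secures the statement at the finite level; what remains is to promote the equivalence $\Phi\equiv\Sigma$ to arbitrary, possibly infinite, structures.

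One inclusion is essentially free and I would record it first. The universal sentence produced in the proof of Theorem~\ref{thm:forallexists} is exactly the sentence $\Phi'$ appearing there, and it was noted that $\Phi'\vdash\Phi$ as a logical validity, since each disjunct $\phi(\bar x,\bar t(\bar x))$ supplies an existential witness. As $\Psi=\Phi'$ and $\Psi$ is logically equivalent to $\Sigma$, this gives $\Mod(\Sigma)=\Mod(\Psi)\subseteq\Mod(\Phi)$ over all structures. Moreover $\Mod(\Sigma)=\mathsf{HSP}(\mathscr{K})$ is a uniformly locally finite variety: the pseudovariety generated by a uniformly locally finite class is again uniformly locally finite, so its finitely generated free algebras are finite, and $\mathscr{K}$ is its finite part by Lemma~\ref{lem:ulf}(4).

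For the reverse inclusion I would show that every model of $\Phi$ lies in $\mathsf{HSP}(\mathscr{K})$. Given $\mathbf{M}\models\Phi$, uniform local finiteness makes $\mathbf{M}$ the directed union of its finitely generated, hence finite, subalgebras $\sg{\mathbf{M}}{\bar a}$. Since $\mathsf{HSP}(\mathscr{K})=\Mod(\Sigma)$ is closed under subalgebras and under directed unions (an equation holds in a directed union exactly when it holds in each member), it suffices to show that each finite $\sg{\mathbf{M}}{\bar a}$ belongs to $\mathscr{K}$; equivalently, that $\mathbf{M}$ embeds into an ultraproduct of members of $\mathscr{K}$ and so lies in $\mathsf{SP}_u(\mathscr{K})\subseteq\mathsf{HSP}(\mathscr{K})$.

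The hard part is precisely this last point: a $\forall^*\exists^*$ sentence need not be preserved under substructures, so it is not automatic that a finite subalgebra $\mathbf{C}=\sg{\mathbf{M}}{\bar a}$ of a model of $\Phi$ satisfies $\Phi$, and hence belongs to $\mathscr{K}=\Mod_{\rm fin}(\Phi)$. I would attack this by exploiting that, on finite structures, $\Phi$ coincides with the universal, and therefore hereditary, sentence $\Psi$: the plan is to close $\bar a$ up inside $\mathbf{M}$ under both the fundamental operations and the existential witnesses demanded by $\Phi$, producing a locally finite subalgebra on which $\Phi$ holds, and then to transfer the resulting finite pieces into $\mathscr{K}$ using the hereditariness of $\Psi$ together with the uniform bound on the sizes of finitely generated members furnished by Birkhoff's Finite Basis Theorem~\ref{BFBT} and Lemma~\ref{lem:ulf}. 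Controlling this localisation, so that the witness-closure respects the uniform size bound and the relevant finite subalgebras genuinely fall into $\mathscr{K}$, is the delicate step on which the infinite-level equivalence ultimately rests.
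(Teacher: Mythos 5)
Your completed steps coincide exactly with the paper's proof: the paper's entire argument for Corollary~\ref{cor:compton} is the two-step chain (Theorem~\ref{thm:forallexists} to pass to a universal sentence, then Theorem~\ref{thm:Aequation} to pass to equations), and your further observation that $\Phi'\vdash\Phi$ yields $\Mod(\Sigma)=\Mod(\Phi')\subseteq\Mod(\Phi)$ is correct. Where you go beyond the paper is in noticing that Theorem~\ref{thm:forallexists} only gives equivalence \emph{on finite structures}, so the clause ``even amongst structures of possibly infinite size'' does not follow from the chain; the paper's proof passes over this mismatch in silence. (Note also that your first move in the reverse inclusion, that an arbitrary model $\mathbf{M}\models\Phi$ is a directed union of finite subalgebras, already assumes local finiteness of $\Mod(\Phi)$, whereas the hypothesis only speaks about the finite models.)

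The step you leave open, namely $\Mod(\Phi)\subseteq\Mod(\Sigma)$, is not merely delicate: it is false in general, so no refinement of your witness-closure strategy can close it. Take the signature with one binary symbol $\wedge$ and let $\Phi$ be a prenex $\forall^*\exists^*$ form of the conjunction of the semilattice axioms, the chain axiom $(x_1\wedge x_2\approx x_1)\vee(x_1\wedge x_2\approx x_2)$, and
\[
\forall x\forall u\forall v\exists y\ \big(\neg(x\wedge y\approx x)\vee u\approx v\big).
\]
A nontrivial model is precisely a semilattice chain with no least element, so the finite models of $\Phi$ are exactly the one-element algebras: a uniformly locally finite pseudovariety, and indeed $\Mod(\Phi)$ itself is uniformly locally finite, being a class of semilattices, so even the strongest reading of the hypothesis is met. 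Yet $(\mathbb{Z},\min)\models\Phi$. Any variety containing $(\mathbb{Z},\min)$ contains its two-element subchain, which is a finite algebra that does not model $\Phi$; hence no conjunction of equations is logically equivalent to $\Phi$ over all structures. Your own diagnosis shows exactly where the strategy dies: in $(\mathbb{Z},\min)$ the witness-closure of any finite subalgebra is an infinite descending chain, so it is never finite and never lands in $\mathscr{K}$. The honest conclusion is the one your completed steps (equivalently, the paper's two-line proof) actually establish: $\Phi$ agrees with $\Sigma$ on finite structures, $\Mod(\Sigma)\subseteq\Mod(\Phi)$, and $\mathscr{K}$ is the finite part of the finitely based locally finite variety $\Mod(\Sigma)$ --- which is how the corollary is glossed in the paper's introduction. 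The ``even amongst structures of possibly infinite size'' clause is correct for the universal sentence $\Phi'$ and for $\Sigma$, but read as a statement about $\Phi$ itself it is an overstatement, and you were right to refuse to wave it through.
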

\begin{proof}
Theorem \ref{thm:forallexists} shows that $\Phi$ is equivalent to a universal sentence, and then Theorem  \ref{thm:Aequation2} shows this is equivalent to a conjunction of equations.
\end{proof}
Another way to state Corollary \ref{cor:compton} is that a uniformly locally finite pseudovariety~$\mathscr{K}$ of finite algebras is either finitely axiomatised by equations, or cannot be defined by any $\forall^*\exists^*$-sentence.

We now observe that the assumption of uniform local finiteness in these statements is necessary (a similar example can be found in \cite{rosen}).
\begin{eg}\label{eg:Jtrivial2}
The class of finite $\mathcal{J}$-trivial monoids is definable by a universal Horn sentence, but not by any system of equations.  So the Preservation Theorem for $\mathsf{HSP}$-classes fails at the finite level, even amongst universal Horn sentences.  The example also shows that Theorems \ref{thm:Aequation}, \ref{thm:Aequation2} and Corollary \ref{cor:compton} do not hold if the assumption of uniform local finiteness is dropped.
\end{eg}
\begin{proof}
Our definition of $\mathcal{J}$-triviality in Example \ref{eg:Jtrivial} is by way of a universal Horn sentence.  However as all finite nilpotent monoids are $\mathcal{J}$-trivial, all equations true of $\mathcal{J}$-trivial monoids are true of all monoids.  So the class of finite $\mathcal{J}$-trivial monoids is not the set of finite models of any system of equations (and certainly not of any finite conjunction of equations).
\end{proof}

\section{Toolkit: Ehrenfeucht Fra\"{\i}ss\'e games in finite algebras}\label{sec:EF}
In this section we consider  Ehrenfeucht-Fra\"{\i}ss\'e games on finite algebras.  
While the reader can find a deeper development of Ehrenfeucht-Fra\"{\i}ss\'e games in almost any model theory or finite model theory text, the restriction to finite algebras offers some modest improvements that do not seem to be exposited in such texts.   
Libkin~\cite{lib} for example develops these games in the finite setting, but restricts to relational signatures (possibly with constants).  
Hodges~\cite{hod} presents Ehrenfeucht-Fra\"{\i}ss\'e games for arbitrary signatures (on not-necessarily finite models), but two versions of the game $EF_k({\bf A},{\bf B})$ and $EF_k[{\bf A},{\bf B}]$ (coinciding in relational signatures) are introduced to capture different features, and only un-nested first order formul{\ae} are captured.  
The present section serves as both a preliminary introduction for later use, particularly in results Section~\ref{sec:undecid}, and to make the observation that \emph{finite} algebraic signatures offer a uniform treatment of both algebraic and relational signatures (essentially by way of $EF_k({\bf A},{\bf B})$).  For example, we prove completeness of the $EF_k({\bf A},{\bf B})$ game when it comes to establishing the failure of first order definability amongst finite algebras within uniformly locally finite classes.  

The board of the game consists two structures ${\bf A}$ and ${\bf B}$, which in this paper will be finite structures in the same arbitrary finite signature (typically algebraic for us, though not necessarily).  The goal of Spoiler is to reveal the two structures as distinct; the goal of Duplicator is to hide any distinction between the two  for as long as possible!

The players play a certain number of rounds.  Each round consists of the following rules.
\begin{enumerate}
\item Spoiler picks a structure; either ${\bf A}$ or ${\bf B}$.
\item Spoiler makes a move by selecting an element of that structure; either $a\in{\A}$ or $b\in{\B}$.
\item Duplicator responds by selecting an element in the other structure.\\
\end{enumerate}

After $n$ rounds of an Ehrenfeucht-Fra\"{\i}ss\'e game, we have moves ($a_1,\dots,a_n$) and ($b_1,\dots,b_n$). 
Duplicator wins the $n$-round game  provided the map that sends each $a_i$ to $b_i$ extends to an isomorphism between the substructures of ${\bf A}$ and ${\bf B}$ generated by $\{a_1,\dots,a_n\}$ and $\{b_1,\dots,b_n\}$ respectively.  If Duplicator can win the $n$-round game regardless of how Spoiler plays, we say that Duplicator has \emph{an $n$-round winning strategy} and we write ${\bf A}\equiv_n{\bf B}$.  In the conventional setting, where positive arity operations are disallowed, there is almost no generation power: it simply means the usual automatic inclusion of any constants in the signature.  

The \emph{quantifier rank} $\qr$ of a formula is defined inductively as follows:
\begin{itemize}
\item If $\phi$ is an atomic formula then $\qr(\phi)=0$;
\item $\qr(\phi_1\vee\phi_2)=\qr(\phi_1\And\phi_2)=\max\{\qr(\phi_1),\qr(\phi_2)\}$;
\item $\qr(\neg\phi)=\qr(\phi)$;
\item $\qr(\forall x\,  \phi)=\qr(\exists x\, \phi)=\qr(\phi)+1$.\\
\end{itemize}
We use the notation $\FO[n]$ for all first order formul{\ae} of quantifier rank up to $n$, and in general use the abbreviation FO for \emph{first order logic}
(recalling that \texttt{FO} refers to the class of first order definable decision problems).

The first fundamental theorem for Ehrenfeucht-Fra\"{\i}ss\'e games in finite model theory is the following; there is no requirement for the signature to be relational here.
\begin{thm}\label{thm:algEF}
The following are equivalent for two finite structures ${\bf A}$ and ${\bf B}$ of the same signature\up:
\begin{enumerate}
\item ${\bf A}\equiv_n{\bf B}$\up; that is, Duplicator has a winning strategy in an $n$-round play of our new game.
\item ${\bf A}$ and ${\bf B}$ agree on $\FO[n]$.
\end{enumerate}
\end{thm}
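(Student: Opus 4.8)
The plan is to prove a stronger, parametrised statement by induction on $n$ and then specialise to empty tuples. For equal-length tuples $\bar a=(a_1,\dots,a_k)$ from $A$ and $\bar b=(b_1,\dots,b_k)$ from $B$, write $\bar a\equiv_n\bar b$ to mean that Duplicator has an $n$-round winning strategy in the game begun from the position $(\bar a,\bar b)$, so that a win requires the extended map to generate an isomorphism of substructures after all moves are made. I would prove that $\bar a\equiv_n\bar b$ holds if and only if $\A\models\phi(\bar a)\Leftrightarrow\B\models\phi(\bar b)$ for every formula $\phi(x_1,\dots,x_k)$ with $\qr(\phi)\le n$; Theorem~\ref{thm:algEF} is then the case $k=0$ (empty tuples, sentences, and ${\A}\equiv_n{\B}$).

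For the base case $n=0$ the winning condition is exactly that $a_i\mapsto b_i$ extends to an isomorphism $h\colon\sg{\A}{\bar a}\to\sg{\B}{\bar b}$. Since every element of a generated subalgebra is a term value, such an $h$ must send $t^{\A}(\bar a)\mapsto t^{\B}(\bar b)$, and conversely this assignment is a well-defined isomorphism precisely when every atomic formula (an equality $s\approx t$ between terms, or a relational atom applied to terms) holds of $\bar a$ in $\A$ if and only if it holds of $\bar b$ in $\B$. As $\FO[0]$ is the Boolean closure of the atomic formulas, agreement on $\FO[0]$ is exactly agreement on atomic formulas, which is exactly the existence of $h$. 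This is the one place where the algebraic, rather than relational, reading already matters, since here the atoms range over \emph{all} terms.

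For the inductive step, the forward direction is the routine game-to-logic translation. Every $\FO[n+1]$ formula is a Boolean combination of atomic formulas and formulas $\exists y\,\psi$ with $\qr(\psi)\le n$, so since agreement is preserved by Boolean combinations it suffices to treat these two shapes. The atomic case follows from the base case together with monotonicity of the game ($\bar a\equiv_{n+1}\bar b$ forces the opening positions to match, hence $\bar a\equiv_0\bar b$). For $\exists y\,\psi$, if $\A\models\exists y\,\psi(\bar a,y)$ with witness $a$, let Spoiler open by playing $a$; Duplicator's strategy supplies $b\in B$ with $(\bar a,a)\equiv_n(\bar b,b)$, whence the induction hypothesis gives $\B\models\psi(\bar b,b)$ and so $\B\models\exists y\,\psi(\bar b,y)$. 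The symmetric argument gives the reverse direction. For the converse I would show directly that agreement on $\FO[n+1]$ furnishes a strategy: when Spoiler plays $a\in A$, I must produce $b\in B$ with $(\bar a,a)\equiv_n(\bar b,b)$, for which, by the induction hypothesis, it suffices to find $b$ so that $(\bar a,a)$ and $(\bar b,b)$ agree on $\FO[n]$.

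The main obstacle is precisely this last step in an algebraic signature. In a relational signature one invokes the finiteness, up to logical equivalence, of formulas of quantifier rank $\le n$ in fixed free variables (Hintikka formulas); but with operations there are infinitely many terms, hence infinitely many atomic formulas, so I would instead exploit finiteness of the \emph{structure} $\B$ directly. Suppose no suitable $b$ exists. Then for each $b\in B$ there is a formula $\theta_b(\bar x,y)$ with $\qr(\theta_b)\le n$, $\A\models\theta_b(\bar a,a)$ and $\B\not\models\theta_b(\bar b,b)$ (replacing a separating formula by its negation if needed, which does not raise quantifier rank). As $B$ is finite, $\theta:=\bigwedge_{b\in B}\theta_b$ is a genuine finite formula of quantifier rank $\le n$ satisfied by $(\bar a,a)$, so $\A\models\exists y\,\theta(\bar a,y)$, a formula of rank $\le n+1$. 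Agreement on $\FO[n+1]$ then gives $\B\models\exists y\,\theta(\bar b,y)$, witnessed by some $b^{*}\in B$; but $\B\models\theta(\bar b,b^{*})$ forces $\B\models\theta_{b^{*}}(\bar b,b^{*})$, contradicting the choice of $\theta_{b^{*}}$. Spoiler playing in $B$ is symmetric, using finiteness of $A$. Thus it is finiteness of both structures, and not any finiteness of the logic, that powers the converse, and this is the step requiring the most care in the algebraic setting.
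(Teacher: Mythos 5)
Your proof is correct, and it takes a genuinely different route from the paper's. The paper never runs the induction on the game directly: it interposes the back-and-forth relations $\simeq_k$ of Definition \ref{defn:backNforth}, states a three-way equivalence between agreement on $\FO[k]$, $\equiv_k$ and $\simeq_k$, and then defers to Libkin's relational proof, which it says carries over ``close to verbatim'' once the algebraic difficulties are isolated in Lemmas \ref{lem:tpfinite}, \ref{lem:finsubset} and \ref{lem:type}: there, the infinitude of terms is tamed by Birkhoff's Finite Basis Theorem \ref{BFBT}, which bounds term height so that each rank-$k$ type $\tp_k({\bf A},a)$ collapses to a finite equivalent subset and is then captured by a single Hintikka-style formula $\alpha_k(x)$. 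That is exactly the ``finiteness of the logic'' strategy you deliberately avoided; your conjunction-over-$B$ trick makes the finiteness of the \emph{structures} carry the hard (logic-to-game) direction instead, so you need neither type formulas nor the Finite Basis Theorem, and your argument is shorter and self-contained for this theorem. The trade-off is reusability: the paper's heavier machinery is not there only for Theorem \ref{thm:algEF} --- the formulas $\alpha_k(x)$ and the finiteness of the set of rank-$k$ types are precisely what power the completeness half of Theorem \ref{thm:meth} (inexpressibility within a uniformly locally finite class yields the pairs ${\bf A}_n\equiv_n{\bf B}_n$), and there your trick cannot substitute, since that argument needs one finite family of defining formulas uniform over an entire class, not a separating formula for a single fixed pair of finite structures. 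One point worth a sentence if you write yours up: your monotonicity step ($\bar a\equiv_{n+1}\bar b$ implies $\bar a\equiv_0\bar b$) silently uses that an isomorphism between the substructures generated by the extended tuples restricts to an isomorphism between the substructures generated by the initial tuples; this is true because homomorphisms carry generated subalgebras onto the subalgebras generated by their images, but in the algebraic setting, where the winning condition is evaluated only at the end of the game, it is exactly the point that needs saying.
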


Theorem \ref{thm:algEF} gives rise to a general methodology for proving inexpressibility results, which we give in the next theorem.  The  theorem is usually restricted to relational signatures (no operations of positive arity) where uniform local finiteness holds always, and so the entire statement can be given as an if and only if.  The proof under uniform local finiteness assumptions is the main new contribution in the section, though the arguments are mostly almost identical, but incorporating Birkhoff's Finite Basis Theorem \ref{BFBT}.
\begin{thm}\label{thm:meth}
A property $\mathcal{P}$ is not expressible in FO if for every $n\in \mathbb{N}$, there exist two finite structures ${\bf A}_n$ and ${\bf B}_n$, such that\up:
\begin{itemize}
\item ${\bf A}_n\equiv_n{\bf B}_n$.
\item ${\bf A}_n$ has property $\mathcal{P}$, and ${\bf B}_n$ does not.
\end{itemize}
When relativised to any uniformly locally finite class of structures, the converse also holds\up: inexpressibility in FO ensures the existence of ${\bf A}_n$ and ${\bf B}_n$.
\end{thm}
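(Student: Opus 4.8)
This is the Ehrenfeucht–Fra\"iss\'e methodology theorem, and it splits into two implications. The first (sufficiency) direction is the classical standard argument that holds in complete generality, and I would prove it by contraposition using Theorem~\ref{thm:algEF}. The second (converse) direction is the genuinely new content and is where the uniform local finiteness hypothesis and Birkhoff's Finite Basis Theorem~\ref{BFBT} must enter.

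For the forward direction, suppose $\mathcal{P}$ \emph{were} expressible in FO, say by a sentence $\Psi$ of quantifier rank $n$. By hypothesis there exist finite structures ${\bf A}_n\equiv_n{\bf B}_n$ with ${\bf A}_n$ having $\mathcal{P}$ and ${\bf B}_n$ not. By Theorem~\ref{thm:algEF}, ${\bf A}_n$ and ${\bf B}_n$ agree on all of $\FO[n]$, in particular on $\Psi$; but ${\bf A}_n\models\Psi$ while ${\bf B}_n\not\models\Psi$, a contradiction. Hence $\mathcal{P}$ is not FO-expressible.

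For the converse, I would work within a fixed uniformly locally finite class $\mathscr{C}$ and assume $\mathcal{P}$ is not FO-expressible \emph{relative to} $\mathscr{C}$. The target is: for every $n$ there are ${\bf A}_n,{\bf B}_n\in\mathscr{C}$ with ${\bf A}_n\equiv_n{\bf B}_n$, one having $\mathcal{P}$ and the other not. The key obstacle is that $\equiv_n$ is defined via back-and-forth on substructures \emph{generated} by the played elements, so I must control how many FO[$n$]-inequivalent structures there are. The plan is to show that, up to logical equivalence, there are only \emph{finitely many} sentences in $\FO[n]$ over $\mathscr{C}$: this is exactly where Birkhoff's Finite Basis Theorem~\ref{BFBT} is needed. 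Uniform local finiteness gives a finite set $F$ of terms (in any fixed number of variables) such that every term is $\mathscr{C}$-equivalent to one in $F$; consequently every atomic formula in bounded variables is, over $\mathscr{C}$, equivalent to one built from the finitely many terms of $F$, so there are only finitely many atomic types, and by induction on quantifier rank only finitely many $\FO[n]$-equivalence classes of formulas. Each $\equiv_n$-class of structures in $\mathscr{C}$ is therefore definable by a single ``Hintikka-style'' $\FO[n]$ sentence (the conjunction of the finite type it realises). If $\mathcal{P}$ separated every pair ${\bf A}\equiv_n{\bf B}$ — i.e.\ were $\equiv_n$-invariant for some $n$ — then $\mathcal{P}$ would be a finite disjunction of such Hintikka sentences and hence FO-expressible, contradicting our assumption. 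So for every $n$ there must exist ${\bf A}_n\equiv_n{\bf B}_n$ in $\mathscr{C}$ distinguished by $\mathcal{P}$, which is precisely the conclusion.

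The main obstacle, as indicated, is the finiteness of $\FO[n]$ up to equivalence in an \emph{algebraic} signature: in the relational case this is automatic because there are finitely many atomic formulas in a bounded number of variables, but with operations of positive arity there are infinitely many terms, and only uniform local finiteness (via Theorem~\ref{BFBT}, collapsing terms to the finite set $F$) rescues the argument. Once that finiteness is in hand, the Hintikka-formula packaging and the contrapositive are routine, and Theorem~\ref{thm:algEF} supplies the bridge between $\equiv_n$ and $\FO[n]$-agreement throughout.
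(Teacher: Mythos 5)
Your proposal is correct and follows essentially the same route as the paper: the forward direction is the standard contrapositive via Theorem~\ref{thm:algEF}, and the converse is proved by showing that $\equiv_n$-invariance plus uniform local finiteness forces $\mathcal{P}$ to be a finite disjunction of rank-$n$ type-defining sentences, with Birkhoff's Finite Basis Theorem~\ref{BFBT} supplying the collapse of terms that makes $\FO[n]$ finite up to equivalence. Your ``Hintikka-style'' sentences are exactly the paper's formulas $\alpha_k$ from Lemma~\ref{lem:type} (built from the finite subset of the type identified in Lemma~\ref{lem:finsubset}), so the only difference is that you re-derive this finiteness directly from Theorem~\ref{BFBT} rather than citing the intermediate lemmas.
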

    
The following example demonstrates the necessity of some caveat for the ``only if'' part of Theorem \ref{thm:meth}, and therefore a fundamental difference between relational signatures and algebraic signatures.
\begin{example}
Consider the signature of \emph{unars}---unary algebras with a single unary operation $f$---and consider the property $\mathcal{P}$ that $f^n(x)\approx f^{n+1}(x)$ holds for sufficiently large $n$.  Then~$\mathcal{P}$ is not first order definable at the finite level, yet unars with property~$\mathcal{P}$ can be distinguished from unars failing  $\mathcal{P}$ in a one step algebraic Ehrenfreucht-Fra\"{\i}ss\'e game.
\end{example}
\begin{proof}
Assume ${\bf A}$ satisfies $\mathcal{P}$ and  ${\bf B}$ does not.  So there is $b\in B$ such that $f^n(b)$ never coincides with $f^{n+1}(b)$, for any $n$.  Spoiler chooses this $b$, and for any $a\in A$ that duplicator selects we find that $\sg{\bf A}{a}\not\cong \sg{\bf B}{b}$ because for some $n$ the property $f^n(a)=f^{n+1}(a)$ holds.  So ${\bf A}\not\equiv_1{\bf B}$.

Now to show that $\mathcal{P}$ is not first order definable at the finite level.  For this it suffices to work in the relational setting, where $f$ is replaced by its graph: this is because if there is a first order formula defining $\mathcal{P}$ in the algebraic setting, then it may be translated into one defining $\mathcal{P}$ in the relational setting.  Let $\mathbb{A}_k$ denote the digraph consisting of a path of length $k$ with a loop as terminal vertex, and let $\mathbb{B}_k$ denote the digraph consisting of the disjoint union of $\mathbb{A}_k$ with a $k$-cycle.  It is routine exercise in standard  Ehrenfreucht-Fra\"{\i}ss\'e games to verify that for sufficiently large~$k$, Duplicator may survive $n$ rounds of this game; see Figure~3.4 of Libkin~\cite{lib} and accompanying text for example.
\end{proof}

Define the \emph{height} of a term as follows: variables are of height $0$.  If $f$ is a fundamental operation of arity $n$ and the maximum height amongst some set of terms $t_1,\dots,t_n$ is $k$, then $f(t_1,\dots,t_n)$ is of height $k+1$.  We interpret this as defining the height of nullaries to be $1$.

The following lemma is an algebraic variant of Lemma 3.13 of Libkin \cite{lib}.
\begin{lem}\label{lem:tpfinite}
Let $k,m,n$ be natural numbers.
\begin{enumerate}
\item Every atomic subformula of a rank $k$ formula in $m$ free variables $x_0,\dots,x_{m-1}$ involves at most $m+k$ variables.
\item Up to logical equivalence there are only finitely many distinct rank $k$ formul{\ae} in the $m$ free variables $x_0,\dots,x_{m-1}$ whose terms are of height at most $n$.
\end{enumerate}
\end{lem}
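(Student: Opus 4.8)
The plan is to treat the two parts in sequence: part (1) is a bookkeeping argument about parse trees, and part (2) is an induction on quantifier rank that follows the familiar relational argument (Libkin's Lemma~3.13), with the height bound $n$ playing the role that finiteness of a relational signature plays there.

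For part (1), recall that an atomic formula in our signature has the form $t_1\approx t_2$ or $R(t_1,\dots,t_j)$, and the variables it \emph{involves} are exactly those occurring in the terms $t_i$. First I would show, by a routine structural induction on $\phi$, that along any path from the root of the parse tree of $\phi$ to an atomic subformula $\psi$ there are at most $\qr(\phi)=k$ quantifier nodes: the clauses $\qr(\phi_1\vee\phi_2)=\qr(\phi_1\And\phi_2)=\max$ and $\qr(\neg\phi)=\qr(\phi)$ never decrease the count along the chosen branch, while $\qr(\forall x\,\phi)=\qr(\exists x\,\phi)=\qr(\phi)+1$ adds exactly one per quantifier node. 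Since each such quantifier binds a single variable, any variable occurring in $\psi$ is either bound by one of these at most $k$ quantifiers, or else free in $\phi$ and hence one of $x_0,\dots,x_{m-1}$. This gives the bound $m+k$.

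For part (2), fix the finite signature and $n$. The key preliminary is that, the signature being finite, for any finite variable set $V$ there are only finitely many terms of height at most $n$ over $V$ --- a short induction on $n$, since a term of height $\le n+1$ is $f(t_1,\dots,t_r)$ with each $t_i$ of height $\le n$ --- and therefore only finitely many atomic formul{\ae} over $V$ whose terms have height at most $n$. I would then induct on $k$, proving simultaneously for all $m$ that up to logical equivalence there are only finitely many rank-$k$ formul{\ae} in $x_0,\dots,x_{m-1}$ with terms of height at most $n$. In the base case $k=0$ a quantifier-free formula is a Boolean combination of atomic formul{\ae} all of whose variables are free (so among $x_0,\dots,x_{m-1}$ by part (1)); there are finitely many such atoms, say $N$, hence at most $2^{2^{N}}$ inequivalent Boolean combinations. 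For the step $k\to k+1$ I would write an arbitrary rank-$(k+1)$ formula as a Boolean combination of \emph{blocks}: maximal subformul{\ae} that are atomic or begin with a quantifier. The atomic blocks have all their variables among $x_0,\dots,x_{m-1}$ and so are finite in number; each quantified block has the form $Q y\,\psi$ with $\qr(\psi)\le k$, and after $\alpha$-renaming $y$ to the canonical variable $x_m$, $\psi$ is a rank-$(\le k)$ formula in $x_0,\dots,x_m$ whose terms again have height at most $n$. The induction hypothesis, applied with $m+1$ free variables, yields finitely many such $\psi$ up to equivalence, hence finitely many blocks, hence finitely many Boolean combinations up to equivalence.

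The argument is routine once set up, and the only genuinely non-relational point --- the step to guard against --- is the potential infinitude of atomic formul{\ae}: an algebraic signature admits unboundedly many terms, so without the hypothesis bounding term height there would already be infinitely many inequivalent quantifier-free formul{\ae} at rank $0$. The height bound is precisely what restores finiteness of the atomic building blocks, and one must verify that it is inherited by the subformul{\ae} $\psi$ arising in the quantifier step; this holds because every atomic subformula of $\psi$ is an atomic subformula of $\phi$, and subterms of a height-$\le n$ term have height $\le n$. The remaining care is purely the quantifier-rank bookkeeping already carried out in part (1) and the standard finite bound on Boolean combinations of a finite family of formul{\ae}.
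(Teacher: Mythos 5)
Your proposal is correct and is essentially the paper's own argument: the paper simply defers to Libkin's Lemma~3.13, doing the induction over $k$ (with $m$ varying) and noting that the only change in the algebraic setting is the base case of part~(2), where the height bound on terms restores finiteness of the atomic formul{\ae} --- exactly the point you isolate and verify. Your explicit parse-tree count for part~(1) and the block decomposition with $\alpha$-renaming in the quantifier step are just the spelled-out details of that same standard induction.
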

\begin{proof}
Both statements are proved by induction over $k$ (the number $m$ varies: for each $k$ we ask the statement be true of any $m$).  We omit full details of the proof as it is essentially identical to as in \cite[Lemma~3.13]{lib}, with the one difference occurring in the second item, in consideration of the base case of a rank $0$ formula.  However the difference is minor: the bound on height of terms continues to ensure that there are only finitely many atomic formul{\ae} in $m$ free variables, and every rank $0$ formula is a Boolean combination of these, and hence the base statement is true.   \end{proof}

\begin{defn}
Let ${\bf A}$ be a finite algebra and $a\in A$. Define the \emph{rank $k$ type} of the element $a$, denoted $\tp_k({\bf A},a)$, to be the set of all  FO-formul{\ae} $\varphi(x)$ of rank $k$ for which ${\bf A}\models \varphi(a)$.
\end{defn}
The set $\tp_k({\bf A},a)$ is always infinite because there is no bound on height of any terms that appear within.  The following lemma contains the primary use of uniform local finiteness.
\begin{lem}\label{lem:finsubset}
Let ${\bf A}$ be a finite algebra and $a\in A$.  Then the rank $k$ type $\tp_k({\bf A},a)$ of $a$ is logically equivalent to a finite subset of $\tp_k({\bf A},a)$.
\end{lem}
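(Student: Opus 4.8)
The plan is to exploit the two immediately preceding results: Birkhoff's Finite Basis Theorem~\ref{BFBT}, to tame the infinitely many terms that can occur in rank $k$ formul{\ae}, together with Lemma~\ref{lem:tpfinite}(2), which bounds the number of formul{\ae} once term height is controlled. First I would observe that the single finite algebra $\mathbf A$ generates a uniformly locally finite variety $\mathscr{K}=\mathsf{HSP}(\mathbf A)$: for each $m$ the $m$-generated free algebra of $\mathscr K$ embeds into a finite power of $\mathbf A$ and so is finite, with size bounded in terms of $m$ alone. This is precisely where uniform local finiteness enters, and it is what licenses the application of Birkhoff's Finite Basis Theorem to $\mathscr K$.

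Next I would bound the variables. By Lemma~\ref{lem:tpfinite}(1), applied with $m=1$, every atomic subformula of a rank $k$ formula $\varphi(x)$ in the single free variable $x$ involves at most $1+k$ variables. Applying Birkhoff's Finite Basis Theorem~\ref{BFBT} to $\mathscr K$ with parameter $1+k$ yields a finite set $T_{\rm BFB}(\mathscr K,1+k)$ of terms in the variables $x_0,\dots,x_k$ such that every term in these variables is $\mathscr K$-equivalent to one of them. Letting $n$ denote the maximum height among the finitely many terms of $T_{\rm BFB}(\mathscr K,1+k)$, it follows that every term occurring in an atomic subformula of any rank $k$ formula $\varphi(x)$ is $\mathscr K$-equivalent to a term of height at most $n$.

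The central step is then a term-replacement argument. Given $\varphi\in\tp_k(\mathbf A,a)$, I would replace each term occurring in $\varphi$ by its $\mathscr K$-equivalent reduced term of height at most $n$, obtaining a formula $\varphi'$. Since the reduced terms agree with the originals under every valuation in $\mathscr K$, the substitution preserves satisfaction on every member of $\mathscr K$ (in particular on $\mathbf A$) for every assignment; moreover it alters neither the free variable $x$ nor the quantifier structure, so $\varphi'$ again has rank $k$, belongs to $\tp_k(\mathbf A,a)$, and has all its terms of height at most $n$. By Lemma~\ref{lem:tpfinite}(2) there are only finitely many rank $k$ formul{\ae} in the single variable $x$ whose terms have height at most $n$, up to logical equivalence. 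Choosing one representative from each such equivalence class that meets $\tp_k(\mathbf A,a)$ produces a finite subset $\Psi\subseteq\tp_k(\mathbf A,a)$; since every member of the type is equivalent over $\mathscr K$ to some element of $\Psi$, the finite conjunction $\bigwedge\Psi$ is equivalent to the full type.

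The main delicate point, and the one I would be careful to state precisely, is the sense of ``equivalent''. The term reductions furnished by Birkhoff's Finite Basis Theorem hold over $\mathscr K$ (equivalently, over $\mathbf A$ and its finite companions), not as identities of pure logic—indeed formul{\ae} using unboundedly tall terms are not logically reducible to bounded-height ones over arbitrary structures—so the equivalence of $\tp_k(\mathbf A,a)$ to a finite subset is understood relative to $\mathscr K$, which is exactly the setting in which the lemma is later used. The remaining care is purely bookkeeping: matching the variable bound from Lemma~\ref{lem:tpfinite}(1) to the parameter $1+k$ fed into Birkhoff's Finite Basis Theorem, and performing the substitution uniformly so that no new variables are introduced and the rank is unchanged.
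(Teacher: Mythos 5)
Your overall scaffolding (Birkhoff's Finite Basis Theorem with parameter $k+1$, reduction of terms to bounded height, then Lemma~\ref{lem:tpfinite}(2)) matches the paper's, but your resolution of what you yourself flag as ``the main delicate point'' creates a genuine gap: you end up proving a weaker statement than the lemma asserts. The lemma claims the type is \emph{logically} equivalent to a finite subset, and this strength is needed downstream: in Lemma~\ref{lem:type} the algebra ${\bf B}$ is an \emph{arbitrary} similar algebra, not a member of $\mathscr{K}=\mathsf{HSP}_{\rm fin}({\bf A})$, so an equivalence that holds only relative to $\mathscr{K}$ gives no way to pass from ${\bf B}\models\alpha_k(b)$ to ${\bf B}\models\varphi(b)$ for the unbounded-height formul{\ae} $\varphi$ in the type. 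With your finite subset $\Psi$, satisfaction of $\bigwedge\Psi$ at $b$ controls only the bounded-height part of the type, and the conclusion $\tp_k({\bf B},b)=\tp_k({\bf A},a)$ of Lemma~\ref{lem:type} --- and hence the Ehrenfeucht-Fra\"{\i}ss\'e completeness results it feeds --- no longer follows.

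The missing idea is the paper's device for upgrading $\mathscr{K}$-equivalence to genuine logical equivalence: put the reduction machinery itself inside the finite subset. Each equation of $\Sigma_{\rm BFB}$, written with its first variable as the free variable $x$ and the remaining $k$ variables universally quantified, is a rank~$k$ formula satisfied at $a$, hence a legitimate member of $\tp_k({\bf A},a)$; call this finite set $\Sigma_{\rm BFB}'$. The paper's finite subset is $\Sigma_{\rm BFB}'$ together with the members of the type whose terms have bounded height. Now the passage from an arbitrary $\Psi$ in the type to its bounded-height form $\Psi'$ is a \emph{logical consequence} of $\Sigma_{\rm BFB}'$: by item (4) of Theorem~\ref{BFBT} the reduction proceeds by exact replacement only, with no substitution into $x$ --- essential, since $x$ must remain a fixed free variable --- so any structure and element satisfying $\Sigma_{\rm BFB}'$ satisfies $\Psi\leftrightarrow\Psi'$. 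This is precisely why the paper's enhanced statement of Birkhoff's Finite Basis Theorem records the replacement-only clause. With $\Sigma_{\rm BFB}'$ added to your finite subset, your argument goes through and yields the logical equivalence the lemma actually states.
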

\begin{proof}
We use the notation and statement of Birkhoff's Finite Basis Theorem~\ref{BFBT}, specifically the sets $T_{\rm BFB}(\mathsf{HSP}_{\rm fin}({\bf A}),k+1)$ and $\Sigma_{\rm BFB}(\mathsf{HSP}_{\rm fin}({\bf A}),k+1)$, noting that we are using $k+1$ rather than $k$ as in the statement.   Each equation 
\[
f(u_{i_1}(x_0,x_1,\dots,x_k),\dots, u_{i_n}(x_0,x_1,\dots,x_k))\approx u_{i'}(x_0,x_1,\dots,x_k)
\]
 in $\Sigma_{\rm BFB}$ is satisfied universally by ${\bf A}$, and hence the formula 
 \[
 \forall x_1\dots \forall x_k\ f(u_{i_1}(x,x_1,\dots,x_k),\dots, u_{i_n}(x,x_1,\dots,x_k))\approx u_{i'}(x,x_1,\dots,x_k)
 \]
  is a rank $k$ formula satisfied at $x=a$, and so is contained in the set $\tp_k({\bf A},a)$.   Let $\Sigma_{\rm BFB}'$ be the set of these formul{\ae}: the equations in $\Sigma_{\rm BFB}$ but with the first variable listed as $x$ and not quantified. Let $\Psi(x)$ be an arbitrary formula in $\tp_k({\bf A},a)$, and let $s(x,x_1,\dots,x_k)\approx t(x,x_1,\dots,x_k)$ be an atomic subformula of $\Psi(x)$, noting that the first statement in Lemma \ref{lem:tpfinite} implies that there are indeed (at most) $k+1$ distinct variables.   Then by Birkhoff's Finite Basis Theorem~\ref{BFBT} part~(3), there are $i,j$ such that  ${\bf A}\models s(x,x_1,\dots,x_k)\approx u_i(x,x_1,\dots,x_{i_k})$ and ${\bf A}\models t(x,x_1,\dots,x_m)\approx u_i(x,x_1,\dots,x_{i_k})$, and by part~(4), there is a proof of these equalities that proceeds by direct replacement using laws in $\Sigma_{\rm BFB}$.   Thus we can replace the atomic subformula $s(x,x_1,\dots,x_k)\approx t(x,x_1,\dots,x_k)$ in $\Psi$ by $u_i(x,x_1,\dots,x_{i_k})\approx u_j(x,x_{i_1},\dots,x_{i_k})$, where direct replacement is used because the value of $x$ is to be fixed as the single element $a$ so cannot be substituted.  Thus $\Psi(x)$ is equivalent to one in which all terms have height at most the maximum height of the terms in $T_{\bf BFB}$.  Thus, by the second part of Lemma \ref{lem:tpfinite}, the set $\tp_k({\bf A},a)$ is logically equivalent to the finite subset consisting of $\Sigma_{\rm BFB}'$ along with all members of $\tp_k({\bf A},a)$ whose terms are of height at most $n$. 
\end{proof}
The following is essentially Theorem 3.15 of Libkin \cite{lib} in the algebraic setting.  As the (easy) proof is unchanged we omit it, though comment that the sentence $\alpha_k(x)$ is of course just the conjunction of the members of the finite subset of $\tp_k({\bf A},a)$ identified as equivalent to $\tp_k({\bf A},a)$ in Lemma \ref{lem:finsubset}.
\begin{lem}\label{lem:type}
For any finite algebra ${\bf A}$ and element $a\in A$ there is a rank $k$ formula $\alpha_k(x)$ such that ${\bf A}\models \alpha_k(a)$ and any similar algebra ${\bf B}$ with $b\in B$ satisfying $\alpha_k(b)$ has $\tp_k({\bf B},b)=\tp_k({\bf A},a)$.
\end{lem}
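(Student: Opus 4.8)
The plan is to take $\alpha_k(x)$ to be literally the conjunction furnished by Lemma~\ref{lem:finsubset}. Let $\Gamma$ be the finite subset of $\tp_k(\A,a)$ that that lemma shows to be logically equivalent to the whole of $\tp_k(\A,a)$, and set $\alpha_k(x):=\bigwedge_{\gamma\in\Gamma}\gamma(x)$. First I would record that $\alpha_k$ is a genuine rank $k$ formula: every $\gamma\in\Gamma$ has rank $k$, and the clause $\qr(\phi_1\wedge\phi_2)=\max\{\qr(\phi_1),\qr(\phi_2)\}$ in the definition of quantifier rank guarantees that a finite conjunction does not raise the rank. That $\A\models\alpha_k(a)$ is then immediate, since each conjunct lies in $\tp_k(\A,a)$ and so is satisfied by $\A$ at $a$.

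For the transfer statement, suppose $\B$ is a similar algebra with $b\in B$ and $\B\models\alpha_k(b)$. The forward inclusion $\tp_k(\A,a)\subseteq\tp_k(\B,b)$ is where Lemma~\ref{lem:finsubset} does the real work: because $\Gamma$ is logically equivalent to the full type, $\bigwedge\Gamma$ semantically entails every $\psi\in\tp_k(\A,a)$, so from $\B\models\alpha_k(b)$ we obtain $\B\models\psi(b)$ for each such $\psi$, i.e.\ $\psi\in\tp_k(\B,b)$.

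The reverse inclusion I would obtain by the standard negation trick, the one place where a little care with the definition of $\tp_k$ is needed. Suppose some rank $k$ formula $\varphi(x)$ lay in $\tp_k(\B,b)\setminus\tp_k(\A,a)$. Then $\A\not\models\varphi(a)$, so $\A\models\neg\varphi(a)$; since $\qr(\neg\varphi)=\qr(\varphi)=k$, the formula $\neg\varphi$ is again of rank $k$ and hence $\neg\varphi\in\tp_k(\A,a)$. The forward inclusion just established then forces $\B\models\neg\varphi(b)$, contradicting $\varphi\in\tp_k(\B,b)$. Thus $\tp_k(\B,b)\subseteq\tp_k(\A,a)$, and together with the forward inclusion this gives $\tp_k(\B,b)=\tp_k(\A,a)$, as required.

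I do not expect a genuine obstacle here: all the substantive content---finiteness up to logical equivalence, which ultimately rests on uniform local finiteness via Birkhoff's Finite Basis Theorem~\ref{BFBT}---has already been absorbed into Lemma~\ref{lem:finsubset}. The only facts that remain to be checked are the bookkeeping observations that rank $k$ formulas are closed under finite conjunction and under negation, both of which read off directly from the inductive definition of $\qr$. Consequently the residual work is purely the two-inclusion argument sketched above, which is exactly why the paper treats the statement as an immediate algebraic analogue of the relational case and defers to it.
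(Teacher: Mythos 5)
Your proposal is correct and coincides with the paper's intended argument: the paper explicitly states that $\alpha_k(x)$ is the conjunction of the finite subset of $\tp_k({\bf A},a)$ furnished by Lemma~\ref{lem:finsubset}, and then defers the (routine) two-inclusion verification to the standard relational proof in Libkin. Your write-up simply fills in that deferred bookkeeping—closure of rank-$k$ formul{\ae} under finite conjunction and negation, plus the negation trick for the reverse inclusion—exactly as the paper envisions.
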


We shall prove the equivalence of (1) and (2) in  Theorem \ref{thm:algEF}, as well as the following the back-and-forth equivalence (a standard step in this argument, define here for convenience).
\begin{defn}\label{defn:backNforth}
We inductively define the back-and-forth relations $\simeq_k$ on finite $\mathcal{F}$-structures ${\bf A}$ and ${\bf B}$ as follows.
\begin{itemize}
\item ${\bf A}\simeq_0{\bf B}$ iff ${\bf A}\equiv_0{\bf B}$; that is, ${\bf A}$ and ${\bf B}$ satisfy the same atomic sentences.
\item ${\bf A}\simeq_{k+1}{\bf B}$ iff the following two conditions hold:\hfill\\
\indent {\bf forth}: for every $a\in{\bf  A}$, there exists $b\in{\bf B}$ such that $({\bf A}, a)\simeq_k({\bf B},b)$.\\
\indent {\bf back}: for every $b\in{\bf B}$, there exists $a\in{\bf A}$ such that $({\bf A}, a)\simeq_k({\bf B},b)$.
\end{itemize}
\end{defn}

 The following extension of Theorem \ref{thm:algEF} now follows using the standard proof. 
\begin{thm}
Let ${\bf A}$ and ${\bf B}$ be two finite structures of the same signature. Then the following are equivalent:
\begin{enumerate}
\item ${\bf A}$ and ${\bf B}$ agree on FO[$k$]\up;
\item ${\bf A}\equiv_k{\bf B}$\up;
\item ${\bf A}\simeq_k{\bf B}$.
\end{enumerate}
\end{thm}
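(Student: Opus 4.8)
The plan is to prove the three-way equivalence by establishing $(2)\Leftrightarrow(3)$ together with $(1)\Leftrightarrow(3)$; combining these closes the loop and in particular re-derives the $(1)\Leftrightarrow(2)$ content of Theorem \ref{thm:algEF}. Throughout I would treat the chosen elements as adjoined constants, writing $({\bf A},\bar a)$ for ${\bf A}$ expanded by interpreting fresh constant symbols as the tuple $\bar a$, so that the relations $\simeq_m$, the game after a prefix of moves, and agreement on formulas at $\bar a$ can all be compared in a common signature. Since a finite algebra with finitely many added constants is still finite, and $\mathsf{HSP}_{\rm fin}$ of any single finite algebra is uniformly locally finite, the type machinery of Lemmas \ref{lem:finsubset} and \ref{lem:type} remains available in each expanded structure.

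For $(2)\Leftrightarrow(3)$ I would argue by induction on $k$, simply matching the rules of the game against Definition \ref{defn:backNforth}; this step is purely combinatorial and uses nothing about local finiteness. The base case $k=0$ holds because Duplicator wins with no further moves exactly when the map $\bar a\mapsto\bar b$ extends to an isomorphism of the subalgebras generated by the adjoined constants, which is agreement on atomic sentences, i.e.\ $\simeq_0$. For the inductive step, Duplicator has a $(k+1)$-round winning strategy precisely when, for each opening move $a\in{\bf A}$ by Spoiler there is a reply $b\in{\bf B}$ from which Duplicator wins the remaining $k$-round game on $({\bf A},a)$ and $({\bf B},b)$, and symmetrically for an opening move in ${\bf B}$; by the inductive hypothesis these are exactly the \textbf{forth} and \textbf{back} clauses defining $({\bf A},a)\simeq_k({\bf B},b)$.

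For $(3)\Rightarrow(1)$ I would prove the stronger statement that $({\bf A},\bar a)\simeq_m({\bf B},\bar b)$ implies ${\bf A}$ and ${\bf B}$ agree on every formula of rank at most $m$ evaluated at $\bar a,\bar b$, by induction on $m$; the theorem is the case of empty tuples and $m=k$. The base case is agreement on atomic formulas, built into $\simeq_0$, and Boolean connectives are immediate, so it suffices to treat $\exists x\,\psi(x)$ with $\psi$ of rank $m$: if ${\bf A}\models\exists x\,\psi(\bar a,x)$ with witness $c$, the \textbf{forth} clause supplies $d\in{\bf B}$ with $({\bf A},\bar a,c)\simeq_m({\bf B},\bar b,d)$, and the inductive hypothesis yields ${\bf B}\models\psi(\bar b,d)$; the \textbf{back} clause handles the reverse implication. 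Crucially, this is carried out one formula at a time and so needs no finiteness of the formula set.

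The direction $(1)\Rightarrow(3)$ is where the algebraic setting genuinely departs from the relational one, and it is the main obstacle. In a purely relational signature one notes that FO[$m$] has only finitely many formulas up to equivalence and encodes ``same rank-$m$ type'' by a single Hintikka sentence; but once operations of positive arity are present FO[$m$] is infinite, since term height is unbounded. This is precisely the gap the preceding lemmas are designed to bridge: Lemma \ref{lem:finsubset}, via Birkhoff's Finite Basis Theorem \ref{BFBT} and uniform local finiteness, shows each rank-$m$ type is logically equivalent to a finite subset of itself, and Lemma \ref{lem:type} packages this into a single rank-$m$ formula $\alpha_m$ naming the type. Using this I would prove, by induction on $m$, that agreement of $({\bf A},\bar a)$ and $({\bf B},\bar b)$ on all rank-$m$ formulas implies $({\bf A},\bar a)\simeq_m({\bf B},\bar b)$; taking empty tuples and $m=k$ gives $(1)\Rightarrow(3)$. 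For the inductive step, given $c\in{\bf A}$ the formula $\alpha_m$ applied to $(\bar a,c)$ names its rank-$m$ type, so $\exists x\,\alpha_m(\bar a,x)$ is a rank-$(m+1)$ formula holding in ${\bf A}$; by hypothesis it holds in ${\bf B}$, producing $d$ that realises the same type, whence $({\bf A},\bar a,c)$ and $({\bf B},\bar b,d)$ agree on rank-$m$ formulas and the inductive hypothesis gives $({\bf A},\bar a,c)\simeq_m({\bf B},\bar b,d)$; the \textbf{back} clause is symmetric. The only point beyond the relational template is this reliance on Lemma \ref{lem:type}, and hence on uniform local finiteness, to obtain the naming formula $\alpha_m$.
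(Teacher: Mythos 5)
Your proposal is correct and takes essentially the same route as the paper: the paper's proof simply defers to Libkin's Theorem 3.18, observing that the only adjustments needed are precisely the two you make, namely reinterpreting the base case $\simeq_0$/$\equiv_0$ in terms of isomorphism of generated subalgebras, and invoking Lemmas \ref{lem:finsubset} and \ref{lem:type} (hence uniform local finiteness of $\mathsf{HSP}_{\rm fin}$ of a finite algebra, also after adjoining finitely many constants) to get the type-naming formula used in $(1)\Rightarrow(3)$. Your rearrangement of the cycle into $(2)\Leftrightarrow(3)$ plus $(1)\Leftrightarrow(3)$ is an inessential variation of the same argument.
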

\begin{proof}
We omit the proof, as the one given by Libkin for \cite[Theorem~3.18]{lib} (for example) holds close to verbatim, with the only adjustments for algebraic setting having occurred in the adjusted proof of Lemma \ref{lem:type} and its corollary Lemma \ref{lem:finsubset} (which are used in the proof of $(1)\implies(3)$), and in the meaning of the relations $\equiv_k$, $\simeq_k$.  We note that the primary difference in the meaning of $\equiv_k$, $\simeq_k$ manifests at the trivial base step: a statement
\[
({\bf A},a_1,\dots,a_n)\simeq_0({\bf B},b_1,\dots,b_n)
\]
(which is the base $\simeq_0$ assumption founding the relation $\simeq_n$) is equivalent to the subalgebra $\sg{\bf A}{a_1,\dots,a_n}$ being isomorphic to $\sg{\bf B}{b_1,\dots,b_n}$ under the map sending each $a_i$ to the corresponding $b_i$.  But this matches our game-based definition of $\equiv_0$ and of agreement on $\FO[0]$.
\end{proof}

Finally, we prove the final statement in Theorem \ref{thm:meth}, showing completeness of the method for detecting failure of first order definability within uniformly locally finite classes of finite algebras. We do this by proving the contrapositive: a property~$\mathcal{P}$ of a uniformly locally finite class of finite algebras $\mathscr{U}$ is expressible in FO iff there exists a number $k$ such that for every two algebras ${\bf A}$ and ${\bf B}$ in $\mathscr{U}$, if ${\bf A}\in\mathcal{P}$ and ${\bf A}\equiv_k{\bf B}$, then ${\bf B}\in\mathcal{P}$.

\begin{proof}[Proof of final statement in Theorem \ref{thm:meth}.]
Let ${\bf A}$ and ${\bf B}$ be in the uniformly locally finite $\mathscr{U}$.
If $\mathcal{P}$ is expressible by a FO sentence $\Phi$, let $k\coloneqq \qr(\Phi)$. If ${\bf A}\in\mathcal{P}$, then ${\bf A}\models\Phi$, and hence for ${\bf B}$ with ${\bf A}\equiv_k{\bf B}$ we have ${\bf B}\models\Phi$. Thus ${\bf B}\in\mathcal{P}$.

Conversely, if ${\bf A}\in\mathcal{P}$ and ${\bf A}\equiv_k{\bf B}$ imply ${\bf B}\in\mathcal{P}$, then any two algebras in $\mathscr{U}$ with the same rank-$k$ type agree on $\mathcal{P}$. Now, as $\mathscr{U}$ is a uniformly locally finite class of finite algebras, there is a bound on the size of the $n$-generated algebras and so there are only finitely many different rank-$k$ types. Thus, for every pair of algebras in $\mathscr{U}$ with the same rank-$k$ type, there exists a rank-$k$ formula, namely $\alpha_k(x)$ defined in Lemma \ref{lem:type}, such that ${\bf A}\models \alpha_{k}(a)$ iff ${\bf B}\models \alpha_{k}(b)$. Hence $\mathcal{P}$ is just a disjunction of some of the $\alpha_{k}$'s (those witnessed by members of $\mathscr{U}$).
\end{proof}

To conclude this section, we mention a further useful trick for demonstrating non first order definability, this time calling on the classical model theoretic ultraproduct.  This simple but effective idea has been used recently by Cz\'edli \cite[\S2]{cze} for example; we give the proof for completeness.
\begin{lem}\label{lem:czedli}
Let $\mathscr{K}$ be a class of finite structures such that for all large enough~$n$ there are finite structures $\mathbf{A}_n\in \mathscr{K}$ and ${\bf B}_n\notin\mathscr{K}$ for which there is an ultraproduct $\prod_{n}{\bf A}_n/\mathscr{U}\cong \prod_{n}{\bf B}_n/\mathscr{U}$.  Then $\mathscr{K}$ is not first order definable at the finite level.
\end{lem}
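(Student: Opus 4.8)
The plan is to argue by contradiction, with {\L}o\'{s}'s ultraproduct theorem doing essentially all of the work. Suppose, for contradiction, that $\mathscr{K}$ were first order definable at the finite level, say by a first order sentence $\Phi$, so that a finite structure $\mathbf{C}$ lies in $\mathscr{K}$ if and only if $\mathbf{C}\models\Phi$. The hypothesis supplies, for all large enough $n$, structures $\mathbf{A}_n\in\mathscr{K}$ and $\mathbf{B}_n\notin\mathscr{K}$; translating membership through $\Phi$, this gives $\mathbf{A}_n\models\Phi$ and $\mathbf{B}_n\models\neg\Phi$ for every $n$ beyond some threshold. I would take $\mathscr{U}$ to be a nonprincipal ultrafilter on the index set, so that every cofinite set of indices belongs to $\mathscr{U}$. (This is the intended reading: were $\mathscr{U}$ principal, the two ultraproducts would collapse to single factors $\mathbf{A}_m\cong\mathbf{B}_m$, contradicting the closure of $\mathscr{K}$ under isomorphism, so the hypothesis already forces nonprincipality.)

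Next I would apply {\L}o\'{s}'s theorem to each side separately. The index set $\{n\mid \mathbf{A}_n\models\Phi\}$ contains all sufficiently large $n$, hence is cofinite and so lies in $\mathscr{U}$; therefore $\prod_n\mathbf{A}_n/\mathscr{U}\models\Phi$. Symmetrically, $\{n\mid \mathbf{B}_n\models\neg\Phi\}$ is cofinite and in $\mathscr{U}$, so $\prod_n\mathbf{B}_n/\mathscr{U}\models\neg\Phi$, that is, $\prod_n\mathbf{B}_n/\mathscr{U}\not\models\Phi$.

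Finally, the hypothesised isomorphism $\prod_n\mathbf{A}_n/\mathscr{U}\cong\prod_n\mathbf{B}_n/\mathscr{U}$, together with the invariance of first order satisfaction under isomorphism, forces these two structures to agree on $\Phi$, contradicting the two conclusions of the previous step. This contradiction shows that no such $\Phi$ can exist, so $\mathscr{K}$ is not first order definable at the finite level. I do not anticipate a genuine obstacle here; the only point needing care is conceptual rather than technical, namely that the ultraproducts are typically \emph{infinite} and thus lie outside the finite level at which $\Phi$ was assumed to characterise $\mathscr{K}$. This is harmless, since $\Phi$ is an ordinary first order sentence that may be evaluated on any structure, and {\L}o\'{s}'s theorem ties its truth in the infinite ultraproduct back to its truth in the finite factors $\mathbf{A}_n$ and $\mathbf{B}_n$, which is exactly where the definability hypothesis is invoked.
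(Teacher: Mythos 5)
Your proposal is correct and follows essentially the same route as the paper: the paper's (very brief) proof likewise applies {\L}o\'{s}'s theorem to transfer truth of a candidate defining sentence $\Phi$ from the $\mathbf{A}_n$ to their ultraproduct, then across the isomorphism to the $\mathbf{B}_n$-ultraproduct, and back down to (cofinitely many of) the finite structures $\mathbf{B}_n\notin\mathscr{K}$, contradicting definability; your version merely phrases this as a proof by contradiction rather than directly. The only cosmetic difference is your aside on nonprincipality of $\mathscr{U}$, which is in fact unnecessary: the set of indices where $\mathbf{A}_n\models\Phi$ is the entire index set of the ultraproduct, so it lies in any ultrafilter, principal or not.
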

\begin{proof}
Any first order sentence $\Phi$ true of $\mathscr{K}$ is true of all of the $\mathbf{A}_n$, and then of $\prod_{\mathbb{N}/\mathscr{U}}{\bf A}_n$, and therefore of cofinitely many of the $\mathbf{B}_n$.  Hence $\Phi$ does not define~$\mathscr{K}$.
\end{proof}
An example usage of this lemma will be given in Proposition \ref{pro:AC2FO} below.

\section{A first order definable pseudovariety that is not definable by a $\forall^*\exists^*$ sentence.}\label{sec:FO}
The construction of a nonfinitely based finite algebra with variety membership problem in $\FO$ will be based on an idea used in Clark, Davey, Jackson and Pitkethly \cite[Theorem 4.2]{CDJP}: the congruence classes of any congruence on a finite lattice are intervals. The result in \cite{CDJP} extends beyond finite algebras to infinite Boolean topological algebras, but in the restriction to finite structures, it suffices to have just a semilattice $\wedge$, as we now prove.  
\begin{thm}\label{thm:FOq}
If $\{{\bf L}_1,\dots,{\bf L}_k\}$ is a finite set of semilattice-based finite algebras, then the class $\mathsf{SP}_{\rm fin}^+(\{{\bf L}_1,\dots,{\bf L}_k\})$ is the class of finite models of a $\forall^*\exists^*\forall^*$ sentence in first order logic.  The same is true for $\mathsf{SP}_{\rm fin}(\{{\bf L}_1,\dots,{\bf L}_k\})$.
\end{thm}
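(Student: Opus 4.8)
The plan is to use the standard characterisation that a finite algebra $\mathbf{A}$ lies in $\mathsf{SP}_{\rm fin}(\{\mathbf{L}_1,\dots,\mathbf{L}_k\})$ exactly when the homomorphisms from $\mathbf{A}$ into the $\mathbf{L}_i$ separate the points of $A$: the product of a separating family of homomorphisms embeds $\mathbf{A}$ into a finite product, and conversely the coordinate projections of any such embedding separate points. Thus for $|A|\ge 2$, membership says: for every pair $a\neq b$ there is a homomorphism into some $\mathbf{L}_i$ sending $a,b$ to distinct values. Since every such homomorphism surjects onto a subalgebra of its codomain, and there are only finitely many subalgebras $\mathbf{S}$ (of bounded size) of $\mathbf{L}_1,\dots,\mathbf{L}_k$, it suffices to express, for a fixed finite $\mathbf{S}$ with universe $S=\{s_1,\dots,s_n\}$, the property ``there is a surjective homomorphism $\phi\colon\mathbf{A}\to\mathbf{S}$ with $\phi(a_0)\neq\phi(b_0)$'' as an $\exists^*\forall^*$ formula in the free variables $a_0,b_0$.

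The crux — and the step I expect to be the main obstacle — is encoding such a homomorphism by a \emph{bounded} existential guess together with a universal check, adapting to mere semilattices the ``congruence classes are intervals'' device of \cite{CDJP}. Here I exploit that each fibre $\phi^{-1}(s_j)$ is a subsemilattice (as $\phi$ respects $\wedge$) and hence, being finite, has a $\wedge$-least element $c_j=\bigwedge\phi^{-1}(s_j)$; this is exactly the $\wedge$-minimum-of-a-congruence-class trick flagged after Theorem~\ref{thm:fodecomp}. I would existentially quantify the representatives $\bar c=(c_1,\dots,c_n)$ and then \emph{recover} $\phi$ from them by a quantifier-free formula. Writing $c\sqsubseteq a$ for $c\wedge a\approx c$ (the semilattice order) and $\le$ for the fixed order of $\mathbf{S}$, set
\[
\chi^{\mathbf{S}}_j(a,\bar c)\ :=\ (c_j\sqsubseteq a)\ \wedge\ \bigwedge_{l\,:\,s_l\not\le s_j}\neg\,(c_l\sqsubseteq a).
\]
The key verification is that if $\phi$ is a genuine surjective homomorphism with $\phi(a)=s_p$, then $\chi^{\mathbf{S}}_j(a,\bar c)$ holds precisely for $j=p$: monotonicity of $\phi$ gives $c_l\sqsubseteq a\Rightarrow s_l=\phi(c_l)\le\phi(a)=s_p$, so $c_p\sqsubseteq a$ witnesses $\chi^{\mathbf{S}}_p$ while any $j\neq p$ is defeated by the index $l=p$ (since $s_p\not\le s_j$). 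Thus $\bar c$ alone determines $\phi$, and this is what makes the bounded guess possible without joins.

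With $\phi$ so recovered, the remaining clauses are routine universal checks. I would assert: (well-definedness) $\forall a\,\bigvee_j\bigl(\chi^{\mathbf{S}}_j(a,\bar c)\wedge\bigwedge_{j'\neq j}\neg\chi^{\mathbf{S}}_{j'}(a,\bar c)\bigr)$; (homomorphism) for each fundamental operation $f$, $\ \forall\bar a\ \bigwedge_{\bar\jmath}\bigl(\bigwedge_t\chi^{\mathbf{S}}_{j_t}(a_t,\bar c)\rightarrow\chi^{\mathbf{S}}_{f^{\mathbf{S}}(s_{j_1},\dots)}(f(\bar a),\bar c)\bigr)$, together with $\bigwedge_j\chi^{\mathbf{S}}_j(c_j,\bar c)$ to force $\phi(c_j)=s_j$; and (separation) $\bigvee_{j\neq j'}\bigl(\chi^{\mathbf{S}}_j(a_0,\bar c)\wedge\chi^{\mathbf{S}}_{j'}(b_0,\bar c)\bigr)$. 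Passing the first two checks makes the defined map a homomorphism into $\mathbf{S}\le\mathbf{L}_i$ (soundness), while the paragraph above supplies a valid guess whenever a separating homomorphism exists (completeness). The existence of a separating homomorphism into \emph{some} $\mathbf{L}_i$ is then $\bigvee_{\mathbf{S}}\exists\bar c\,(\mathrm{WD}^{\mathbf{S}}\wedge\mathrm{Hom}^{\mathbf{S}}\wedge\mathrm{Sep}^{\mathbf{S}})$, which is $\exists^*\forall^*$: the existentials pull out, and the disjunction of the universal parts collapses using $(\forall\bar x\,\alpha)\vee(\forall\bar y\,\beta)\equiv\forall\bar x\forall\bar y\,(\alpha\vee\beta)$, so the matrix stays universal.

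Finally I would assemble the full sentence $\Phi_{\mathsf{SP}}:=\forall a_0\forall b_0\bigl(a_0\approx b_0\vee\mathrm{SepHom}(a_0,b_0)\bigr)$, which is $\forall^*\exists^*\forall^*$ and captures $\mathsf{SP}_{\rm fin}$: it holds vacuously on the one-element algebra, matching the convention that the empty product places the trivial algebra in $\mathsf{SP}_{\rm fin}$, and for $|A|\ge 2$ it is equivalent to separation of all pairs. For $\mathsf{SP}^{+}_{\rm fin}$ the only discrepancy is the one-element algebra, which now belongs to the class only when some $\mathbf{L}_i$ admits a homomorphic image of it; so I would conjoin $\Phi_{\mathsf{SP}}$ with $\bigvee_{\mathbf{S}}\exists\bar c\,(\mathrm{WD}^{\mathbf{S}}\wedge\mathrm{Hom}^{\mathbf{S}})$, asserting existence of at least one homomorphism. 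This conjunct is redundant when $|A|\ge 2$ and decisive when $|A|=1$, and since $\forall^*\exists^*\forall^*$ (that is, $\Pi_3$) is closed under finite conjunction the result remains of the required form. The only genuinely novel point is the recovery formula $\chi^{\mathbf{S}}_j$; all the quantifier bookkeeping is mechanical.
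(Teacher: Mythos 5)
Your proposal is correct and is essentially the paper's own argument: the paper's Lemma \ref{lem:onto} likewise encodes a (surjective) homomorphism onto each subalgebra by existentially quantifying the $\wedge$-least elements of its fibres and recovering the kernel from the semilattice order (two elements are identified exactly when the same guessed minima lie below them), with universal conjuncts for the congruence and operation-table checks, and the proof of Theorem \ref{thm:FOq} then assembles the same outer $\forall a\forall b$ shell over a disjunction across all subalgebras, adding the homomorphism-existence disjunct only for $\mathsf{SP}^{+}_{\rm fin}$. One small repair to your key verification: for $j\neq p$ with $s_p\le s_j$ the index $l=p$ does not occur in the conjunction defining $\chi^{\mathbf{S}}_j$, but then the first conjunct $c_j\sqsubseteq a$ fails instead (monotonicity of the recovered map would give $s_j\le s_p$ and hence $j=p$), so your claimed characterisation $\chi^{\mathbf{S}}_j(a,\bar c)\Leftrightarrow j=p$ does hold.
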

Before proving Theorem \ref{thm:FOq} we prove a lemma.  With minor modifications, this lemma (and then Theorem \ref{thm:FOq}) can  be adapted to include relations.

\begin{lem}\label{lem:onto}
Let $\mathscr{L}$ be the class of finite algebras in some finite signature including a binary operation $\wedge$ with respect to which, all members of $\mathscr{L}$ are semilattices.  Fix any ${\bf L}\in\mathscr{L}$.  Then there is
\begin{itemize}
\item a first order $\exists^*\forall^*$-sentence $\Xi_{\bf L}$ asserting the existence of a surjective homomorphism onto ${\bf L}$,
\item a first order $\exists^*\forall^*$-formula ${\not\equiv_{\bf L}}(x,y)$ \up(written $x\not\equiv_{\bf L}y$\up) in two free variables such that for every member ${\bf K}\in\mathscr{L}$ and every pair $a\neq b$ in  $K$, we have that there is a homomorphism $\phi$ from ${\bf K}$ onto ${\bf L}$ with $\phi(a)\neq \phi(b)$ if and only if ${\bf K}\models a\not\equiv_{\bf L}b$.
\end{itemize}
\end{lem}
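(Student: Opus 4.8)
The plan is to encode a surjective homomorphism onto ${\bf L}$ by existentially quantifying a canonical transversal of its kernel classes, exploiting the semilattice structure to make the whole encoding first order. Write $L=\{\ell_1,\dots,\ell_n\}$. The starting observation is that, since every member of $\mathscr{L}$ is a meet-semilattice under $\wedge$, every congruence class of a congruence $\theta$ on ${\bf K}\in\mathscr{L}$ is closed under $\wedge$ (from $a\mathrel\theta b$ we get $a\mathrel\theta a\wedge b$), and so, ${\bf K}$ being finite, each class has a $\wedge$-minimum. Thus a surjective homomorphism $\phi\colon{\bf K}\onto{\bf L}$ gives rise to $n$ distinguished elements $y_1,\dots,y_n\in K$, where $y_i$ is the $\wedge$-minimum of $\phi^{-1}(\ell_i)$.

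The crucial structural step, which I expect to be the main obstacle, is to show that these minima recover $\phi$ by a quantifier-free formula in the parameters $y_1,\dots,y_n$. Writing $u\le v$ for the semilattice order $u\wedge v\approx u$, I would prove two facts. First, the assignment $i\mapsto y_i$ is order preserving: if $\ell_j\le\ell_i$ then $\phi(y_j\wedge y_i)=\ell_j\wedge\ell_i=\ell_j$, so $y_j\wedge y_i$ lies in $\phi^{-1}(\ell_j)$ and hence dominates its minimum $y_j$, forcing $y_j\wedge y_i=y_j$, i.e. $y_j\le y_i$. Second, for any $x$ with $\phi(x)=\ell_i$ we have $y_i\le x$ (minimality), while any $y_j\le x$ satisfies $\ell_j=\phi(y_j)\le\phi(x)=\ell_i$ by monotonicity of $\phi$, whence $y_j\le y_i$ by the first fact; therefore $y_i$ is the greatest of the $y_j$ lying below $x$. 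Consequently $\phi(x)=\ell_i$ is captured exactly by the quantifier-free formula $\delta_i(x)$ asserting ``$y_i\le x$ and $y_j\le x\Rightarrow y_j\le y_i$ for all $j$'', a finite conjunction of $\wedge$-equations in the parameters $y_1,\dots,y_n$.

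With the $\delta_i$ in hand, I would let $\Xi_{\bf L}$ be $\exists y_1\cdots\exists y_n$ applied to the conjunction of: that the $y_i$ are pairwise distinct; the universal sentence $\forall x\,\bigvee_i\delta_i(x)$ (every element is assigned a class, uniqueness being automatic once the $y_i$ are distinct); and, for each fundamental operation $f$ of arity $k$ and each tuple $(i_1,\dots,i_k)$ with $\ell_{i'}:=f^{\bf L}(\ell_{i_1},\dots,\ell_{i_k})$, the universal sentence $\forall x_1\cdots\forall x_k\,[\bigwedge_s\delta_{i_s}(x_s)\to\delta_{i'}(f(x_1,\dots,x_k))]$. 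Each $\delta_i$ is quantifier-free, so the body is a conjunction of universal formulas and $\Xi_{\bf L}$ is $\exists^*\forall^*$ as required. For the equivalence: from a surjective $\phi$ the minima $y_i$ witness the existentials, with $\delta_i$ computing $\phi$ by the structural step; conversely any tuple satisfying the body defines, via $x\mapsto\ell_i$ whenever $\delta_i(x)$, a well-defined map $K\to L$ which is a homomorphism (the operation clauses are precisely the homomorphism identities) and is onto, since $\delta_i(y_i)$ holds.

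Finally, to obtain $\not\equiv_{\bf L}(x,y)$ I would reuse the same block, adding inside the scope of $\exists y_1\cdots\exists y_n$ the quantifier-free conjunct $\bigvee_{i\ne i'}(\delta_i(x)\wedge\delta_{i'}(y))$ expressing that $x$ and $y$ receive different classes, i.e. $\phi(x)\ne\phi(y)$. This keeps the formula $\exists^*\forall^*$, and the stated equivalence is immediate from the two directions above together with the observation that a homomorphism onto ${\bf L}$ separates $a$ from $b$ precisely when they lie in distinct kernel classes.
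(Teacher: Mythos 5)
Your proposal is correct and follows essentially the same route as the paper's own proof: the paper likewise existentially quantifies one element per element of ${\bf L}$, intended as the $\wedge$-minimal elements of the kernel classes of a surjective homomorphism, recovers the homomorphism from these parameters using the semilattice order, asserts compatibility with the fundamental operations by universal formulas, and obtains the separating formula by adding one further conjunct inside the existential block. The only difference is bookkeeping: the paper axiomatises the recovered \emph{kernel} as an equivalence relation ($a\equiv_L b$ iff $a$ and $b$ lie above exactly the same quantified minima) and asserts that it is a congruence whose class representatives behave as in ${\bf L}$, whereas you axiomatise the \emph{fiber map} directly via the predicates $\delta_i$; both give the required $\exists^*\forall^*$ shape, and your encoding, if anything, makes the converse direction (arbitrary witnesses yield a surjection onto ${\bf L}$) more transparent.
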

\begin{proof}
This is essentially a version of the first part of the proof of \cite[Theorem~4.2]{CDJP}.  Let the universe of ${\bf L}$ be $\{1,2,\dots,\ell\}$.  When there is a surjective homomorphism $\phi:{\bf K}\to {\bf L}$, then the kernel of $\phi$ can be understood entirely using the minimal elements of each congruence class: two elements $a,b$ are equivalent modulo this kernel if and only if they sit above precisely the same minimal elements: $\bigand_{1\leq i\leq \ell} (a\geq x_i\leftrightarrow b\geq x_i)$, where the $x_i$ are variables that  will later be existentially quantified over some larger expression; on satisfaction of this expression, the $x_i$ variables will eventually evaluate as minimal elements in congruence classes.  We denote this relation by $\equiv_L$ (without boldface), noting that it is clearly an equivalence relation subject to any evaluation of the variables $x_1,\dots,x_\ell$.  Notationally we write $a\equiv_L b$.  The fact that the kernel is a congruence on ${\bf K}$ can be asserted by the property that for each operation $f$, of arity $n$ say, we have
\[
\forall a_1,\dots,a_n,b_1,\dots,b_n\ \Big(\bigand_{j=1}^n a_j\equiv_L b_j\Big)\rightarrow f(a_1,\dots,a_n)\equiv_L f(b_1,\dots,b_n)
\]
The fact that $\phi$ maps onto ${\bf L}$ can also be asserted using $\equiv_L$: for each fundamental operation $f$ (again, of arity $n$, say), and for each value $f(i_1,\dots,i_n)=j$ in ${\bf L}$ we require $f(x_{i_1},\dots,x_{i_n})\equiv_L x_j$; now take the conjunction over $j=1,\dots,\ell$ and existentially quantify $x_1,\dots,x_\ell$.
We have created the $\exists^*\forall^*$ sentence $\Xi_{\bf L}$ asserting that there is a surjective homomorphism from ${\bf K}$ onto ${\bf L}$. 
We wished to define the corresponding relation $\not\equiv_{\bf L}$ (now with boldface and negated).  To do this, we introduce two free variables $x,y$ and include $\neg(x\equiv_L y)$ as a further conjunct within~$\Xi_{\bf L}$.
\end{proof}
Now we may prove Theorem \ref{thm:FOq}.
\begin{proof}[Proof of Theorem \ref{thm:FOq}]
An algebra ${\bf A}$ is in the universal Horn class of $\{{\bf L}_1,\dots,{\bf L}_k\}$ if and only if  there is at least one homomorphism from ${\bf A}$ to one of the ${\bf L}_i$ and the following ``separation properties'' hold: whenever $a,b\in A$ have $a\neq b$, there is an $i\in\{1,\dots,k\}$ and homomorphism from ${\bf A}$ to ${\bf L}_i$ with $\phi(a)\neq \phi(b)$.  As $\{{\bf L}_1,\dots,{\bf L}_k\}$ is a finite set of finite algebras, we may enumerate the set of all subalgebras ${\bf L}'_1,\dots, {\bf L}_m'$.  The following sentence (referring to the sentences defined in Lemma \ref{lem:onto}) asserts the required properties:
\[
\Big(\bigvee_{1\leq i\leq m}\Xi_{{\bf L}'_i}\Big)\And \forall a\forall b \Big(a\not\approx b\rightarrow \bigvee_{1\leq i\leq m}a\not\equiv_{{\bf L}'_i} b\Big)
\]
Because $\Xi$ and $\not\equiv$ are $\exists^*\forall^*$, this disjunction is a $\forall^*\exists^*\forall^*$-sentence.  Once the semilattice axioms for $\wedge$ have been incorporated, we have a single sentence defining $\mathsf{SP}_{\rm fin}^+(\{{\bf L}_1,\dots,{\bf L}_k\})$.  The sentence for $\mathsf{SP}_{\rm fin}(\{{\bf L}_1,\dots,{\bf L}_k\})$ is the same except that we do not  include the $\bigvee_{1\leq i\leq k}\Xi_{{\bf L}'_i}$ conjunct.
\end{proof}

As a corollary of Theorem \ref{thm:FOq} we prove that the first order JP Problem is true for algebras with a semilattice operation; this follows already from Willard's Finite Basis Theorem \cite{wil}, but the relative simplicity of our proof is suggestive of the possibility of more accessible results of this kind in the finite realm.
\begin{cor}\label{cor:willard}
If ${\bf A}$ is a finite semilattice-based algebra generating a pseudovariety with only finitely many subdirectly irreducible members, then the pseudovariety of~${\bf A}$ is definable in first order logic amongst finite structures.
\end{cor}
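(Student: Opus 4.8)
The plan is to reduce the statement to a direct application of Theorem \ref{thm:FOq}, by exhibiting the pseudovariety of ${\bf A}$ as $\mathsf{SP}_{\rm fin}$ of a finite family of finite semilattice-based algebras, namely the subdirectly irreducible members of the pseudovariety itself. The genuine work has already been done inside Theorem \ref{thm:FOq} (and Lemma \ref{lem:onto}), where the semilattice operation is used to encode congruences and surjections in first order logic; the present argument is a packaging of the subdirect representation theorem on top of that.

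First I would observe that the semilattice axioms for $\wedge$ are equations, and so hold throughout the variety $\mathsf{HSP}({\bf A})$; in particular every member of the pseudovariety $\mathsf{HSP}_{\rm fin}({\bf A})$, and hence every subdirectly irreducible member, is again semilattice-based. By hypothesis there are only finitely many subdirectly irreducible members of the pseudovariety; list them as ${\bf S}_1,\dots,{\bf S}_n$. Each is finite (being a member of a pseudovariety) and semilattice-based, so the family $\{{\bf S}_1,\dots,{\bf S}_n\}$ is exactly of the form required by Theorem \ref{thm:FOq}.

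The key step is then to verify the identity
\[
\mathsf{HSP}_{\rm fin}({\bf A})=\mathsf{SP}_{\rm fin}(\{{\bf S}_1,\dots,{\bf S}_n\}).
\]
The right-to-left inclusion is immediate, since each ${\bf S}_i$ lies in the pseudovariety and pseudovarieties are closed under $\mathsf{S}$ and $\mathsf{P}_{\rm fin}$. For the reverse inclusion I would invoke the subdirect representation theorem: any finite ${\bf B}\in\mathsf{HSP}_{\rm fin}({\bf A})$ is a subdirect product of its subdirectly irreducible quotients, and each such quotient is a finite subdirectly irreducible algebra still lying in the pseudovariety (closure under $\mathsf{H}$ keeps quotients inside, and quotients of finite algebras are finite), hence isomorphic to one of the ${\bf S}_i$. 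Thus ${\bf B}$ embeds subdirectly into a finite product of copies of the ${\bf S}_i$, placing it in $\mathsf{SP}_{\rm fin}(\{{\bf S}_1,\dots,{\bf S}_n\})$, with the trivial one-element algebra recovered as the empty product.

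With this identity established, Theorem \ref{thm:FOq} applies directly to $\{{\bf S}_1,\dots,{\bf S}_n\}$ and yields a $\forall^*\exists^*\forall^*$ first order sentence whose finite models are precisely $\mathsf{SP}_{\rm fin}(\{{\bf S}_1,\dots,{\bf S}_n\})=\mathsf{HSP}_{\rm fin}({\bf A})$, which is the required first order definability of the pseudovariety amongst finite structures. There is no serious obstacle remaining at this level; the only point needing any care is confirming that the subdirectly irreducible members feeding into Theorem \ref{thm:FOq} are genuinely finite and finitely many, and this is automatic, since membership of a pseudovariety already forces finiteness while the hypothesis supplies the finiteness of the list.
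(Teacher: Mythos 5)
Your proposal is correct and follows essentially the same route as the paper's proof: list the finitely many (finite) subdirectly irreducible members, use the subdirect representation theorem to identify the pseudovariety with $\mathsf{SP}_{\rm fin}$ of that list, and then apply Theorem~\ref{thm:FOq}. The extra details you supply (inheritance of the semilattice operation by equations, closure arguments for both inclusions) are exactly the points the paper leaves implicit.
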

\begin{proof}
Let ${\bf A}_1$, \dots, ${\bf A}_k$ be a complete list of the subdirectly irreducible members of the variety of ${\bf A}$ (all finite).  Because every algebra is a subdirect product of its subdirectly irreducible quotients,  the pseudovariety generated by ${\bf A}$ coincides with $\mathsf{SP}_{\rm fin}(\{{\bf A}_1,\dots,{\bf A}_k\})$.  Now apply Theorem \ref{thm:FOq}.
\end{proof}

The \emph{flat extension} of an algebra ${\bf A}$ is the algebra $\flat({\bf A})$ formed over the universe $A\cup\{0\}$ (considered as a disjoint union; rename elements of $A$ if necessary) and with the signature of ${\bf A}$ augmented by a further binary operation $\wedge$ (again, rename existing operations if necessary).  The existing operations of ${\bf A}$ are extended to $0$ by making $0$ an absorbing element, and the operation $\wedge$ is defined by 
\[
x\wedge y =\begin{cases} x&\mbox{ if }x=y\\
0&\mbox{ otherwise.}\end{cases}
\]
The variety $\mathsf{HSP}(\flat({\bf A}))$ generated by the flat extension of a finite algebra ${\bf A}$ (or even partial algebra), has some relationship to the universal Horn class $\mathsf{SP}^+({\bf A})$ of the original algebra ${\bf A}$ \cite{jac:flat,wil0}.  The relationship is particularly precise in the case that~${\bf A}$ has a binary term function acting as a projection: that is, there is a binary term function $\tr$ such that  $x\tr y=y$ for every $x$ and $y$.  In this situation, the subdirectly irreducible members of $\mathsf{HSP}(\flat({\bf A}))$ are precisely the flat extensions of the algebras in the universal Horn class of ${\bf A}$; see \cite[\S5]{jac:flat}.  Much of this can be explained using  the ternary term $(x\wedge y)\tr z$.  On the algebra $\flat({\bf A})$, this term acts as follows:
\[
(x\wedge y)\tr z=\begin{cases}
z &\mbox{ if }x=y\neq 0\\
0&\mbox{ otherwise}.
\end{cases}
\]
In \cite[\S5]{jac:flat} a ternary term with this property is called a \emph{pointed semi-discriminator function} and an algebra is said to be a \emph{pointed semi-discriminator algebra} if it has an absorbing element $0$ and a term function acting as a pointed semi-discriminator function.  It is routine to show that (up to isomorphism), pointed semi-discriminator algebras are exactly those that are term equivalent to flat extensions of partial algebras with a binary projection operation $\tr$.
A variety $\mathscr{V}$ is a \emph{pointed semi-discriminator variety} if there is a ternary term $p(x,y,z)$ such that $\mathscr{V}$  can be generated by algebras on which the term $p$ acts as a pointed semi-discriminator operation; equivalently, if the subdirectly irreducible members of $\mathscr{V}$  are pointed semi-discriminator algebras with respect to $p$.  It is shown in \cite[Theorem 5.8]{jac:flat} that a complete axiomatisation for the pointed semidiscriminator variety over all partial algebras in signature $\mathscr{L}$ is given by the semilattice laws for $\wedge$:
\[
x\wedge (y\wedge z)\approx (x\wedge y)\wedge z,\ x\wedge y\approx y\wedge x,\ x\wedge x\approx x,
\]
 the right normal band laws for  $\tr$:
\[
x\tr (y\tr z)\approx (x\tr y)\tr z,\ x\tr (y\tr z)\approx y\tr (x\tr z),\ x\tr x\approx x,
\]
the laws
\[
(x\wedge y)\tr y\approx x\wedge y,\ (x\tr y)\wedge z\approx x\tr (y\wedge z)
\]
and the laws 
\[
x\tr f(x_1,\dots,x_i,\dots,x_n)\approx f(x_1,\dots,x\tr x_i,\dots,x_n)
\]
for each operation symbol $f\in \mathscr{L}\cup\{\wedge,\tr\}$ (of arity $n$) and each $1\leq i\leq n$.  The lattice of subvarieties of this variety is isomorphic to the lattice of  universal Horn subclasses of the class of all partial algebras in the signature $\mathscr{L}$ (\cite[Corollary~5.4]{jac:flat}).
There is no loss of generality to these facts if the element $0$ is included in the signature as a distinguished nullary satisfying the absorption laws for $0$ with respect to all operations: all finite models and all subdirectly irreducible members of a pointed semidiscriminator variety have this $0$.  We use the suffix ``with $0$'' to describe this amendment; it is included in a nonessential way in the next lemma.

\begin{lem}\label{lem:si}
The pointed semidiscriminator variety with $0$ over any finite signature $\mathscr{L}$ has definable cmi congruences.  More precisely, if $\pi_1(x,y,u,v)$ denotes the formula
\[
[((u\wedge v)\tr x\approx x) \vee (u\tr x\not\approx x\And v\tr x\not\approx x)],
\]
then the formula 
\[
\pi_{x,y}(u,v)\coloneqq x\approx y\vee (x\not\approx x\wedge y\And\pi_1(x,y,u,v))\vee(y\not\approx x\wedge y\And\pi_1(x,y,v,u))
\]
defines cmi congruences in the variety of pointed semi-discriminator algebras over~$\mathscr{L}$.
\end{lem}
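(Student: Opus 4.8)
The plan is to make every member of the pointed semi-discriminator variety with $0$ concrete by representing it as a subdirect product of its subdirectly irreducible quotients. By the results of \cite{jac:flat} recalled above, those quotients are the flat extensions $\flat({\bf B}_i)$ of partial algebras carrying a binary projection, and I would first record the key observation that such a flat algebra is in fact \emph{simple}: if a nonzero $d$ is collapsed with $0$, then for every nonzero $x$ the equations $d\tr x=x$ and $0\tr x=0$ force $x\equiv 0$, and collapsing two distinct nonzero elements forces one of them to $0$ through $\wedge$; so the only congruences are $\Delta$ and $\nabla$. Fixing a subdirect embedding ${\bf A}\le\prod_i\flat({\bf B}_i)$, I would set $\supp(z):=\{i\mid z_i\neq 0\}$ and establish the two term identities on which everything rests, both read off coordinatewise: $u\tr x=x$ holds iff $\supp(x)\subseteq\supp(u)$, and $(u\wedge v)\tr x=x$ holds iff $u_i=v_i\neq 0$ for every $i\in\supp(x)$.

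Next I would analyse, for a fixed nonzero pivot $c$ with $S:=\supp(c)$, the relation $\theta_c:=\{(u,v)\mid {\bf A}\models\pi_1(c,c,u,v)\}$. The two identities show that $\theta_c$ relates $u,v$ exactly when either both restrict to the same nowhere-zero pattern on $S$, or both fail to be nonzero throughout $S$; hence its classes are one ``ideal'' block $\{z\mid S\not\subseteq\supp(z)\}$ together with one block per realised nonzero pattern on $S$. I would then check $\theta_c\in\Con({\bf A})$: the ideal block absorbs every fundamental operation because $0$ is absorbing in each factor (an input that is zero on some coordinate of $S$ propagates a zero on that coordinate), while on the remaining blocks each operation acts coordinatewise on $S$ and so preserves patterns. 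Consequently ${\bf A}/\theta_c$ is again a flat pointed semi-discriminator algebra (ideal block as zero, $\wedge$ the flat meet of patterns, $\tr$ the projection), and by the simplicity observation $\theta_c$ is a \emph{coatom} of $\Con({\bf A})$ whenever $c\neq 0$.

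Coatomicity is what makes maximality free: a coatom is automatically maximal with respect to excluding any pair it does not contain. So all that remains is to control separation, and here the order enters. From the second identity one computes $(a,b)\in\theta_a\Leftrightarrow a\le b$, where $\le$ is the $\wedge$-semilattice order, so that the guard $x\not\approx x\wedge y$ in $\pi_{x,y}$ says precisely $x\not\le y$. Thus $\theta_a$ separates $a$ from $b$ exactly when $a\not\le b$, and symmetrically $\theta_b$ separates them exactly when $b\not\le a$; since $a\neq b$ at least one alternative holds. The guards $x\not\approx x\wedge y$ and $y\not\approx x\wedge y$ are there to switch the pivot between $a$ and $b$, and in each case the selected $\theta_c$ is a coatom not containing $(a,b)$, i.e.\ a completely meet irreducible congruence maximal with respect to $(a,b)$, as required by Definition~\ref{defn:cmi}.

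The step I expect to be the main obstacle is getting the pivot bookkeeping of the guarded disjunction exactly right, so that $\pi_{x,y}$ evaluates to a \emph{single} congruence for every $a\neq b$. Since $\pi_1$ is symmetric in its last two arguments and depends only on its pivot, one must be careful: the choice must pivot on $a$ whenever $a\not\le b$ (covering $a>b$ and the incomparable case) and on $b$ when $a\le b$, and in particular a naive union $\theta_a\cup\theta_b$ of two distinct coatoms in the incomparable case is not even transitive, so one must confirm the Boolean combination does not produce it. I would therefore verify case-by-case, by comparability type of $(a,b)$, that the formula reduces to exactly one $\theta_c$ with $c\in\{a,b\}$ and $c\not\le$ (the other). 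The congruence verification for $\theta_c$ itself — the absorbing-ideal argument together with pattern preservation — is routine, but it is the place where all of the defining axioms of the variety are genuinely used.
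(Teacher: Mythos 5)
Your structural work is sound, and it amounts to a self-contained reconstruction of an argument the paper itself does not spell out: the paper's proof of Lemma~\ref{lem:si} is only a citation to the proof of Theorem~3.1(3) of \cite{jac:flat}, together with a dictionary (the condition $\lambda(c\wedge d)=a$ there corresponds to $(c\wedge d)\tr a\approx a$ here). Your subdirect embedding into flat simple algebras, the description of the classes of $\theta_c$ as the ideal block $\{z\mid \supp(c)\not\subseteq\supp(z)\}$ together with one block per realised nonzero pattern on $\supp(c)$, the coatom property of $\theta_c$ for $c\neq 0$, and the equivalence $(a,b)\in\theta_a\Leftrightarrow a\le b$ are all correct, and they are exactly the ingredients the cited proof uses.

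The genuine gap is the step you defer to the end, and it is not merely bookkeeping: the verification that ``the formula reduces to exactly one $\theta_c$ with $c\in\{a,b\}$ and $c\not\le$ the other'' cannot succeed for the formula as printed, for precisely the reason you isolate. Since $\pi_1(x,y,u,v)$ does not mention $y$ and is symmetric in $u,v$, the expression $\pi_1(x,y,v,u)$ is logically the same formula, so \emph{both} guarded disjuncts of $\pi_{x,y}$ invoke the pivot-$x$ relation $\theta_x$; the printed formula never pivots on $y$. Consequently, when $a<b$ in the semilattice order (first guard fails, second holds), $\pi_{a,b}$ defines $\theta_a$, which \emph{contains} $(a,b)$; in the extreme case $a=0\neq b$ the clause $(u\wedge v)\tr 0\approx 0$ is identically true, so $\pi_{0,b}$ defines $\nabla$, which is certainly not maximal with respect to omitting $(0,b)$. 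So the statement as literally printed fails, and your plan can only close after repairing the formula: the third disjunct must pivot on $y$, i.e.\ read $\pi_1(y,x,v,u)$, \emph{and} the two guards must be made mutually exclusive, say by using $x\approx x\wedge y\And y\not\approx x\wedge y$ as the third guard. Exclusivity is forced by the union danger you flagged, which is real: with corrected pivots but overlapping guards, an incomparable pair yields $\theta_a\cup\theta_b$, and in $\flat(\{p,q\})\times\flat(\{p,q\})$ with $a=(p,0)$, $b=(0,p)$ one has $(p,p)\mathrel{\theta_a}(p,q)\mathrel{\theta_b}(q,q)$ while $\bigl((p,p),(q,q)\bigr)\notin\theta_a\cup\theta_b$, so the union is not transitive and hence not a congruence. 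With the repaired formula (pivot on $x$ when $x\not\approx x\wedge y$, otherwise on $y$), your case analysis does go through and yields a complete and correct proof.
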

\begin{proof}
This is established in the proof of Theorem 3.1(3) in \cite{jac:flat}: see the argument starting in paragraph three of the proof, where this congruence is described in written form, and it is subsequently verified that the corresponding quotient is subdirectly irreducible.  Note that the proof in \cite{jac:flat} is in a more general context and uses expressions such as $\lambda(c\wedge d)=a$, where $\lambda$ is an arbitrary translation.  In the pointed semidiscriminator setting, the property $\lambda(c\wedge d)=a$ is equivalent to $(c\wedge d)\tr a=a$, which in the logical syntax given in the lemma statement appears in subexpressions such as $((u\wedge v)\tr x\approx x)$.
\end{proof}
A pointed semidiscriminator variety is not division ordered, but there is a strong similarity between the cmi congruence defined in Lemma~\ref{lem:si} and that defined in the proof of Proposition~\ref{pro:divisionordered}.  The condition $x\not\approx x\wedge y$ is playing the role of $y\not\divides x$, while $\pi_1(x,y,u,v)$ is easily seen to define the syntactic congruence of the set defined by $\{z\mid z\tr x\not\approx x\}$, with a clear similarity to the set $J_x=\{z\mid z\not\divides x\}$ used in the proof of Proposition~\ref{pro:divisionordered}.

We can now give the main result of the section.  While we omit details, the result can be established more generally in the case that ${\bf L}$ is a partial algebra, provided the semilattice $\wedge$ and the projection term $\tr$ are total.
\begin{thm}\label{thm:FO}
Let ${\bf L}$ be a finite semilattice based algebra with a projection term.  Then the pseudovariety generated by $\flat({\bf L})$ with distinguished $0$ is definable by a $\forall^*\exists^*\forall^*$ sentence of first order logic.
\end{thm}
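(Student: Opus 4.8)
The plan is to reduce the $\mathsf{HSP}_{\rm fin}$-membership question to an $\mathsf{SP}_{\rm fin}$-question on the subdirectly irreducible quotients, and then to apply Theorem~\ref{thm:FOq} to the single semilattice-based algebra $\flat({\bf L})$ (using the flat meet $\wedge$ as the semilattice operation). Write $\mathcal{V}:=\mathsf{HSP}_{\rm fin}(\flat({\bf L}))$. Since ${\bf L}$ has a projection term, $\flat({\bf L})$ is a pointed semi-discriminator algebra with $0$, so every member of $\mathcal{V}$ satisfies the finite equational basis of the pointed semi-discriminator variety recalled above; let $\Psi_0$ be the (universal) conjunction of these equations together with the absorption laws for $0$. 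By \cite[\S5]{jac:flat} the subdirectly irreducible members of $\mathcal{V}$ are exactly the flat extensions of the finite members of the universal Horn class of ${\bf L}$, and each such flat extension already lies in $\mathsf{SP}_{\rm fin}(\flat({\bf L}))$.

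First I would establish the equivalence that, for a finite algebra ${\bf A}$,
\[
{\bf A}\in\mathcal{V}\iff {\bf A}\models\Psi_0\text{ and }{\bf A}/\pi_{a,b}\in\mathsf{SP}_{\rm fin}(\flat({\bf L}))\text{ for all }a\not\approx b,
\]
where $\pi_{a,b}$ is the definable cmi congruence supplied by Lemma~\ref{lem:si}, valid once $\Psi_0$ holds since ${\bf A}$ then lies in the pointed semi-discriminator variety. For the forward direction, $\mathcal{V}$ satisfies $\Psi_0$ and is closed under $\mathsf{H}$, so each quotient ${\bf A}/\pi_{a,b}$ is a subdirectly irreducible member of $\mathcal{V}$ and hence lies in $\mathsf{SP}_{\rm fin}(\flat({\bf L}))$ by the previous paragraph. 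For the reverse direction, $(a,b)\notin\pi_{a,b}$ for every $a\not\approx b$ forces $\bigcap_{a\not\approx b}\pi_{a,b}$ to be the identity congruence, so ${\bf A}$ embeds subdirectly into $\prod_{a\not\approx b}{\bf A}/\pi_{a,b}$; as each factor is in $\mathsf{SP}_{\rm fin}(\flat({\bf L}))\subseteq\mathcal{V}$ and $\mathcal{V}$ is closed under $\mathsf{S}$ and $\mathsf{P}_{\rm fin}$, we obtain ${\bf A}\in\mathcal{V}$.

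Next I would express \,``${\bf A}/\pi_{a,b}\in\mathsf{SP}_{\rm fin}(\flat({\bf L}))$''\, by a $\forall^*\exists^*\forall^*$ formula in the free variables $a,b$. The key device is to carry out the apparatus of Lemma~\ref{lem:onto} and the $\mathsf{SP}_{\rm fin}$-half of Theorem~\ref{thm:FOq} \emph{modulo} the congruence $\theta=\pi_{a,b}$ and \emph{directly on} ${\bf A}$, rather than first constructing a transversal of the $\theta$-classes and relativising to it: relativising quantifiers to a universally defined set of class representatives would introduce extra quantifier alternations, whereas the modulo-$\theta$ formulation does not. A surjective homomorphism from ${\bf A}/\theta$ onto a subalgebra $\flat({\bf L})'_i$ of $\flat({\bf L})$ is the same as a surjective homomorphism ${\bf A}\to\flat({\bf L})'_i$ whose kernel contains $\theta$; so to the $\exists^*\forall^*$ formula of Lemma~\ref{lem:onto} asserting such a surjection — whose kernel is the relation $\equiv_L$ determined by existentially quantified representatives — I would adjoin the universal conjunct $\forall u\forall v\,(\pi_{a,b}(u,v)\rightarrow u\equiv_L v)$, forcing $\theta\subseteq{\equiv_L}$. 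Since $\pi_{x,y}$ is quantifier-free (Lemma~\ref{lem:si}) and $\equiv_L$ is quantifier-free once the representatives are fixed, the resulting $\theta$-relativised separation predicate $x\not\equiv_{\flat({\bf L})'_i,\theta}y$ remains $\exists^*\forall^*$. Membership of ${\bf A}/\pi_{a,b}$ in $\mathsf{SP}_{\rm fin}(\flat({\bf L}))$ is then the separation statement
\[
\forall c\forall d\Big(\neg\pi_{a,b}(c,d)\rightarrow\bigvee_i c\not\equiv_{\flat({\bf L})'_i,\theta}d\Big),
\]
where $\neg\pi_{a,b}(c,d)$ records distinctness in the quotient; this is a $\forall^*\exists^*\forall^*$ formula.

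Finally I would assemble the sentence as $\Psi_0\wedge\forall a\forall b\,(a\not\approx b\rightarrow\mathcal{Q}(a,b))$, where $\mathcal{Q}(a,b)$ is the displayed separation statement. The sentence $\Psi_0$ is universal, $\mathcal{Q}$ is $\forall^*\exists^*\forall^*$, and guarding by the quantifier-free $a\not\approx b$ and prefixing $\forall a\forall b$ leaves the matrix in $\forall^*\exists^*\forall^*$; since the class of $\forall^*\exists^*\forall^*$ sentences is closed under finite conjunction and disjunction — the same closure already exploited in the proof of Theorem~\ref{thm:FOq} — the whole sentence is $\forall^*\exists^*\forall^*$, as required. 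I expect the main obstacle to be the correctness of the third step: confirming that relativising the homomorphism and separation conditions to the congruence $\theta$ (rather than to an explicitly defined set of class representatives) faithfully captures $\mathsf{SP}_{\rm fin}(\flat({\bf L}))$-membership of the quotient while introducing no new quantifier alternation. The decisive enabling facts are that $\pi_{x,y}$ is quantifier-free and that the flat semilattice order renders both $\equiv_L$ and the $\theta$-respecting conjunct quantifier-free.
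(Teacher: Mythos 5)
Your overall skeleton---pass to the definable cmi congruences $\pi_{a,b}$ of Lemma~\ref{lem:si}, express satisfaction in the quotient by substituting the quantifier-free $\pi_{a,b}$ for equality, and close with an outer $\forall a\forall b$---is exactly the paper's strategy. But there is a fatal error in your first paragraph, inherited by the forward direction of your displayed equivalence: the claim that each flat extension of a finite member of the universal Horn class of ${\bf L}$ ``already lies in $\mathsf{SP}_{\rm fin}(\flat({\bf L}))$''. This is false, because the flat extension operator does not commute with products: in $\flat({\bf L})^n$ the flat meet of two distinct tuples is their \emph{agreement pattern} (coordinates where they agree keep their value, the rest become $0$), not the zero tuple, whereas in $\flat({\bf B})$ for ${\bf B}\leq{\bf L}^n$ the flat meet of any two distinct elements is $0$. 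Concretely, let ${\bf L}$ be the two-element lattice with a projection term adjoined and ${\bf B}={\bf L}^2$, a four-element Boolean lattice $\{\bot,p,q,\top\}$. Then ${\bf B}$ is in the universal Horn class of ${\bf L}$, so $\flat({\bf B})$ is a subdirectly irreducible (indeed simple) member of $\mathsf{HSP}_{\rm fin}(\flat({\bf L}))$; yet no embedding $\phi\colon\flat({\bf B})\to\flat({\bf L})^n$ exists: at each coordinate where $\phi(0)$ is nonzero, absorption ($0\cdot x=0+x=0$) forces every element of $B$ to take the same value as $\phi(0)$, while at the remaining coordinates the conditions $\phi(x)\wedge\phi(y)=\phi(0)$ for distinct $x,y\in B$, combined with $\phi(\bot)=\phi(p)\cdot\phi(q)$ and $\phi(\top)=\phi(p)+\phi(q)$, force $\phi(\bot)$ and $\phi(\top)$ to vanish there; hence $\phi(\bot)=\phi(\top)$, contradicting injectivity. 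Since $\flat({\bf B})$ is simple, every $\pi_{a,b}$ on it is the identity congruence, so taking ${\bf A}=\flat({\bf B})$ gives ${\bf A}\in\mathcal{V}$ with ${\bf A}/\pi_{a,b}\cong{\bf A}\notin\mathsf{SP}_{\rm fin}(\flat({\bf L}))$: your equivalence fails left to right, and your final sentence wrongly rejects an algebra that belongs to the pseudovariety.

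A telling symptom is that your argument never uses the hypothesis that ${\bf L}$ itself is semilattice based (you only ever invoke the flat $\wedge$). The paper's proof uses precisely that hypothesis to apply Theorem~\ref{thm:FOq} to ${\bf L}$ \emph{itself}, with ${\bf L}$'s own semilattice operation, obtaining a $\forall^*\exists^*\forall^*$ definition of $\mathsf{SP}^+_{\rm fin}({\bf L})$; the subdirectly irreducible members of $\mathsf{HSP}_{\rm fin}(\flat({\bf L}))$ are then defined as the flat algebras with constant $0$ whose set of \emph{nonzero} elements, under the induced operations, forms a member of $\mathsf{SP}^+_{\rm fin}({\bf L})$---the relativisation of quantifiers to nonzero elements costs no alternation precisely because $0$ is a distinguished constant. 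With that corrected description $\Psi$ of the subdirectly irreducible members, your closing step (replace equality in $\Psi$ by $\pi_{a,b}$ and prefix $\forall a\forall b$) is exactly the paper's argument and goes through.
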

\begin{proof}
The subdirectly irreducible members of the pseudovariety of $\flat({\bf L})$ are just the flat extensions (with $0$ distinguished) of the members of the universal Horn class of ${\bf L}$.  By Theorem~\ref{thm:FOq}, the class $\mathsf{SP}^+_{\rm fin}({\bf L})$ is first order definable by a $\forall^*\exists^*\forall^*$ sentence $\forall x\forall y\ \Phi(x,y)$ where $\Phi(x,y)$ is an $\exists^*\forall^*$ sentence.  The class of flat extensions of members of this class can then be defined in first order logic by asserting that the structure is flat, with bottom element $0$ (this uses only universal quantifiers), and that $\forall x\forall y\ \Phi(x,y)$ holds for all elements that are not $0$: for every variable $z$ in $\forall x\forall y\ \Phi(x,y)$ we must include $\neg(z\approx 0)$.  (This is where having $0$ is a constant is required in our proof; otherwise it would be necessary to existentially quantify the existence of a zero element, adding an extra layer of quantifier alternation to the sentence.)  Thus we have a sentence $\Psi$ defining the subdirectly irreducible members of
$\mathsf{HSP}_{\rm fin}(\flat({\bf L}))$.
We could now try to apply the one-to-many first order reduction of Theorem~\ref{thm:fodecomp}, but this calls on a linear order, which is acceptable from a complexity-theoretic perspective, but steps outside of pure first order logic of the algebras in consideration.  We can avoid this entirely using the formula $\pi_{x,y}(u,v)$ of Lemma~\ref{lem:si}.  
Let~$\pi_{a,b}$ denote the congruence relation defined by $\pi_{a,b}(u,v)$.  For a model ${\bf K}$ we wish to assert that whenever $a,b\in K$ have $a\neq b$ then ${\bf K}/\pi_{a,b}\models \Psi$.  
Let $\Psi^\flat(a,b)$ denote the result of replacing every instance of equality $u\approx v$ in $\Psi$ by $\pi_{a,b}(u,v)$ (here $u$ and $v$ refer to any terms related by equality in $\Psi$).  Then ${\bf K}/\pi_{a,b}\models \Psi$ for every $a\neq b$ is equivalent to 
\[
{\bf K}\models \forall a\forall b\ \neg(a\approx b)\rightarrow \Psi^\flat(a,b).
\]
Thus the sentence $\forall a\forall b\ \neg(a\approx b)\rightarrow \Psi^\flat(a,b)$ along with the equational axioms defining pointed semidiscriminator varieties with $0$, define $\mathsf{HSP}_{\rm fin}(\flat({\bf L}))$ amongst finite algebras.
\end{proof}

There are examples of lattices ${\bf L}$ for which $\mathsf{SP}({\bf L})$ has no finite basis for its universal Horn theory: Belkin~\cite{bel} has shown that the 10-element lattice $M_{3{-}3}$ has this property for example (noting that $\mathsf{SP}({\bf L})$ and $\mathsf{SP}^+({\bf L})$ coincide for any lattice).  As lattices are semilattice based and have a projection term $t(x,y)\coloneqq x+(x\cdot y)$ (using $+$ and $\cdot$ for join and meet respectively, to avoid conflict with the logical connective $\vee$ and our use of $\wedge$ for the flat semilattice operation), we have that $\mathsf{HSP}(\flat({\bf L}))$ (with $0$) has no finite axiomatisation in first order logic.  Theorem \ref{thm:FO} above gives us the following result.
\begin{cor}
If ${\bf L}$ is a finite lattice without a finite basis for its universal Horn theory\up, then $\flat({\bf L})$ with $0$ generates a nonfinitely based variety for which the pseudovariety of finite members  is first order definable by a $\forall^*\exists^*\forall^*$ sentence amongst finite algebras.  
\end{cor}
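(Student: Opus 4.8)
The plan is to establish the two assertions separately: the first order definability of the pseudovariety follows directly from Theorem~\ref{thm:FO}, while the failure of a finite basis is pulled back from the universal Horn side through the flat-extension correspondence of~\cite{jac:flat}. For the definability claim I would first check that a finite lattice ${\bf L}$ meets the hypotheses of Theorem~\ref{thm:FO}. A lattice is semilattice based, since its meet $\cdot$ satisfies the idempotent, commutative and associative laws, and it carries the second-coordinate projection term $x\tr y:=y+(x\cdot y)$, which equals $y$ because $x\cdot y\le y$. Hence Theorem~\ref{thm:FO} applies verbatim to $\flat({\bf L})$ with distinguished $0$, and the pseudovariety $\mathsf{HSP}_{\rm fin}(\flat({\bf L}))$ is definable by a $\forall^*\exists^*\forall^*$ sentence. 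This part is essentially immediate.

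The substance is the failure of a finite basis for $\mathsf{HSP}(\flat({\bf L}))$. I would argue the contrapositive: a finite equational basis for the variety would yield a finite universal Horn basis for $\mathsf{SP}^+({\bf L})$, contradicting the hypothesis on ${\bf L}$. The bridge is the pointed semidiscriminator term $p(x,y,z):=(x\wedge y)\tr z$, which on a flat extension returns $z$ when $x=y\neq 0$ and $0$ otherwise. Using $p$, a quasi-identity $\bigwedge_{i=1}^{n}(s_i\approx t_i)\rightarrow s\approx t$ of ${\bf L}$ translates into the single equation
\[
p\bigl(s_1,t_1,p(s_2,t_2,\dots,p(s_n,t_n,s)\dots)\bigr)\approx p\bigl(s_1,t_1,p(s_2,t_2,\dots,p(s_n,t_n,t)\dots)\bigr),
\]
and conversely, because by~\cite[\S5]{jac:flat} the subdirectly irreducible members of $\mathsf{HSP}(\flat({\bf L}))$ are exactly the flat extensions of the members of $\mathsf{SP}^+({\bf L})$, any equation of the augmented signature can be read back, on these subdirectly irreducibles, as a quasi-identity of the base. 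Under the resulting lattice isomorphism between subvarieties of the pointed semidiscriminator variety and universal Horn subclasses~\cite[Corollary~5.4]{jac:flat}, a finite equational basis for $\mathsf{HSP}(\flat({\bf L}))$ --- taken modulo the finitely many pointed semidiscriminator axioms of~\cite[Theorem~5.8]{jac:flat} --- would correspond to a finite set of quasi-identities axiomatising $\mathsf{SP}^+({\bf L})$. Since ${\bf L}$ has no finite universal Horn basis, no such finite equational basis can exist.

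Finally I would record that for varieties ``nonfinitely based'' coincides with ``not finitely axiomatisable in first order logic'': any single first order sentence defining a variety is, by Birkhoff's $\mathsf{HSP}$-Preservation Theorem, equivalent to a finite conjunction of equations, so the absence of a finite equational basis rules out any finite first order axiomatisation. This completes the argument. The step I expect to be the main obstacle is making the backward half of the translation precise, namely verifying that an arbitrary equation of the augmented signature, when evaluated on the subdirectly irreducible flat extensions, genuinely encodes a quasi-identity of the base, and that finitely many equations encode only finitely many quasi-identities. This is exactly the content carried by the correspondence of~\cite{jac:flat}, so the real work lies in invoking that correspondence at the right level of generality rather than in any fresh computation.
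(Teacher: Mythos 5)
Your proposal is correct and follows essentially the same route as the paper: the first order definability is exactly the application of Theorem~\ref{thm:FO} (lattices being semilattice-based with projection term $x\tr y:=y+(x\cdot y)$ — your form is the right one, since the paper's $x+(x\cdot y)$ equals $x$ by absorption), and the failure of a finite basis is transferred from the universal Horn side through the subvariety/universal-Horn-class correspondence of \cite[Corollary~5.4]{jac:flat}. Your explicit nested-$p$ encoding of quasi-identities as equations is precisely the mechanism underlying that correspondence, so the argument does not differ in substance from the paper's.
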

Thus we have our simultaneous failure of the $\mathsf{S}$, $\mathsf{SP}$ and $\mathsf{HSP}$ preservation theorems at the finite level.  
The lattice ${\bf L}$ may be replaced by any finite signature finite algebra ${\bf A}$ without a finite basis for its universal Horn sentences if there is a projection term and the signature is finite; here \cite[Theorem~5.12]{jac:flat} is used to show that the variety of $\flat({\bf A})$ has no finite axiomatisation.

It is also easy to see that the pseudovariety of $\flat({\bf L})$ with $0$ has no finite axiomatisation by pseudoequations.  
\begin{cor}
If ${\bf L}$ is a lattice without a finite basis for its universal Horn theory\up, then the pseudovariety generated by $\flat({\bf L})$ with $0$ is finitely axiomatisable by a $\forall^*\exists^*\forall^*$ sentence of first order logic, but not finitely axiomatisable by pseudoequations.
\end{cor}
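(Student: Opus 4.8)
Write $\mathscr{W}:=\mathsf{HSP}(\flat({\bf L}))$ for the variety generated by the flat extension, and let $\mathscr{V}:=\mathscr{W}_{\rm fin}=\mathsf{HSP}_{\rm fin}(\flat({\bf L}))$ be its pseudovariety of finite members. The first assertion, finite axiomatisability by a $\forall^*\exists^*\forall^*$ sentence, is immediate: lattices are semilattice-based and carry the projection term $x+(x\cdot y)$, so Theorem~\ref{thm:FO} applies verbatim to $\flat({\bf L})$ with distinguished $0$. All the work is in the second assertion, and the plan is to reduce axiomatisability by pseudoequations to ordinary equational axiomatisability at the finite level.

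The first step is to eliminate the profinite machinery. Since $\flat({\bf L})$ is a single finite algebra, $\mathscr{V}$ is uniformly locally finite, so for each $n$ the free profinite algebra on $n$ generators coincides with the finite relatively free algebra $F_{\mathscr{V}}(n)=F_{\mathscr{W}}(n)$ (see Almeida~\cite{alm}). Consequently every pseudoword in $n$ variables is represented by an ordinary term, and every pseudoequation is equivalent over $\mathscr{V}$ to an ordinary equation. Thus a hypothetical finite pseudoequational basis for $\mathscr{V}$ would yield a finite set $E$ of ordinary equations with $\mathscr{V}=\Mod_{\rm fin}(E)$; in other words, $\mathscr{V}$ would be finitely based by equations at the finite level.

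It therefore remains to rule out any such $E$, and here I would transfer the failure of the finite basis property from the lattice side through the flat construction. The non-finite-basedness of the universal Horn theory of ${\bf L}$ (Belkin~\cite{bel} for $M_{3-3}$) is witnessed by finite partial algebras: for each bound $m$ there is a finite ${\bf B}_m\notin\mathsf{SP}^+({\bf L})$ satisfying every quasi-identity of ${\bf L}$ up to complexity $m$. Using that the subdirectly irreducible members of $\mathscr{W}$ are exactly the flat extensions of members of $\mathsf{SP}^+({\bf L})$ (the correspondence of \cite[\S5]{jac:flat}, under which universal Horn sentences translate into equations), the flat extensions $\flat({\bf B}_m)$ are finite, lie outside $\mathscr{V}$, yet satisfy every equation of $\mathscr{V}$ of bounded complexity. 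Given any finite $E$ of equations true in $\mathscr{V}$, choosing $m$ larger than the complexity of $E$ produces $\flat({\bf B}_m)\models E$ with $\flat({\bf B}_m)\notin\mathscr{V}$, so $E$ cannot define $\mathscr{V}$ at the finite level. With the reduction of the previous step, this shows $\mathscr{V}$ has no finite pseudoequational basis.

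The main obstacle is precisely this last transfer, and in particular the need to work at the finite level throughout: the preceding corollary only asserts that $\mathscr{W}$ is non-finitely-based as a variety, whereas a finite $E$ with $\mathscr{V}=\Mod_{\rm fin}(E)$ need not satisfy $\Mod(E)=\mathscr{W}$ (a uniformly locally finite variety can be non-finitely-based yet have a finitely axiomatisable finite part). The content to be supplied is thus that the failure of the finite basis property for the universal Horn theory of ${\bf L}$ is genuinely a finite-level phenomenon --- which it is, since the quasi-identities valid in ${\bf L}$ are exactly those valid in all finite members of $\mathsf{SP}({\bf L})$ and are separated by finite algebras --- and that the flat correspondence of \cite{jac:flat} carries these finite separating algebras, together with the bounded-complexity bookkeeping, from the Horn setting to the equational setting for $\mathscr{V}$.
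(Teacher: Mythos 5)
Your overall architecture---pushing the failure of finite axiomatisability of the universal Horn theory of ${\bf L}$ through the flat construction---is the same as the paper's, but your first step contains a genuine logical gap. You argue that, by uniform local finiteness, every pseudoword is represented by an ordinary term, ``thus'' a finite pseudoequational basis $\Sigma$ for $\mathscr{V}$ yields a finite set $E$ of equations with $\mathscr{V}=\Mod_{\rm fin}(E)$. This is a non sequitur: the term $s_u$ representing a pseudoword $u$ agrees with $u$ only on algebras \emph{in} $\mathscr{V}$, whereas testing whether $E$ axiomatises $\mathscr{V}$ requires evaluating $E$ and $\Sigma$ on finite algebras \emph{outside} $\mathscr{V}$, where $u$ and $s_u$ can take different values. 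What follows is only $\mathscr{V}\subseteq\Mod_{\rm fin}(E)$, and the reverse inclusion is exactly what you need. The reduction can be repaired, but only by invoking the fact that the value of a pseudoword at a $k$-tuple of a finite algebra is computed inside the subalgebra generated by that tuple, so that satisfaction of a $k$-variable pseudoidentity is determined by the $k$-generated subalgebras. That locality fact is precisely the paper's key device, and once you have it the detour through equations is unnecessary: the paper fixes the finite set $\Sigma$ of pseudoequations with $k$ variables, produces a finite lattice ${\bf L}_k\notin\mathsf{SP}^+({\bf L})$ all of whose $k$-generated sublattices lie in $\mathsf{SP}^+({\bf L})$, observes that every $k$-generated subalgebra of $\flat({\bf L}_k)$ therefore lies in the pseudovariety, and concludes directly that $\flat({\bf L}_k)\models\Sigma$ while $\flat({\bf L}_k)$ lies outside the pseudovariety.

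There is a second, smaller gap in your transfer step: the claim that non-finite-basedness of the Horn theory of ${\bf L}$ is ``witnessed by finite'' algebras is asserted with a circular justification (``\dots and are separated by finite algebras'' is the claim, not a reason). A witness violating some quasi-identity of ${\bf L}$ while satisfying a given finite set of them has a finitely generated sub-witness (universal Horn sentences pass to subalgebras), but finitely generated does not imply finite. The paper closes this by invoking McKenzie's finite equational basis theorem for lattices \cite{mck:lat}: adjoining a finite equational basis of ${\bf L}$ to the given finite set forces witnesses into the locally finite variety of ${\bf L}$, so the finitely generated sub-witness is finite. Finally, your ``bounded complexity'' bookkeeping---that $\flat({\bf B}_m)$ satisfies all equations of $\mathscr{V}$ up to complexity $m$ because ${\bf B}_m$ satisfies quasi-identities up to complexity $m$---does not follow just from citing the correspondence of \cite[\S5]{jac:flat}; the clean argument runs through variable counts and generated subalgebras (an algebra satisfies all $m$-variable universal sentences, or pseudoidentities, of $\mathscr{V}$ as soon as all its $m$-generated subalgebras lie in $\mathscr{V}$), which is again how the paper proceeds.
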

\begin{proof}
Let $\Sigma$ be a finite set of pseudoequations holding in $\flat({\bf L})$ with $0$, and let~$k$ be the number of variables appearing in $\Sigma$.  Because ${\bf L}$ has no finite basis for its equational theory, for each $n$ there exists a lattice ${\bf L}_n$ that lies outside of the universal Horn class of ${\bf L}$ but whose $n$-generated sublattices lie within the universal Horn class of ${\bf L}$.  Because ${\bf L}$ has a finite basis for its equational theory (for example, McKenzie~\cite{mck:lat}), there is no loss of generality in assuming that the lattices ${\bf L}_n$ lie within the variety of ${\bf L}$, and so are finite.  Then $\flat({\bf L}_n)$ lies outside of the pseudovariety of $\flat({\bf L})$, but its $n$-generated subalgebras lie within the pseudovariety of $\flat({\bf L})$.  Then $\flat({\bf L}_k)$ satisfies $\Sigma$, showing that $\Sigma$ is not a basis by pseudoequations.
\end{proof}
This corollary also extends to any finite signature finite algebra ${\bf A}$ without a finite basis for its universal Horn theory (and with a projection term), provided that there exist analogues of the finite algebras ${\bf L}_n$ in the proof (for each $n$).  
It is only the finiteness of the algebras ${\bf L}_n$ that might fail, and there are no known examples where finiteness is known to be not be achievable.  
Such examples would be counterexamples to a universal Horn class variant of the ES problem, which in turn would yield a counterexample to the true ES problem; see~\cite[\S7.4]{jac:flat}.

As alluded to in Subsection \ref{subsec:preservation}, it would be interesting to see whether the pseudovariety of an example such as $\mathsf{HSP}_{\rm fin}(\flat(M_{3{-}3}))$ can be defined by a finite set of equation systems in the sense of Higgins and the second author \cite{higjac,higjac23}.

\section{Pseudovariety membership complexity: Constraint Satisfaction Problems}\label{sec:comp}
There has been little exploration of fine level membership for $\varmem(*,\mathbf{A})$ within tractable cases.  In \cite{jacmcn}, the authors show that $\varmem(*,\mathbf{L})$ is in \texttt{NL} for Lyndon's algebra ${\bf L}$.  In \cite{VGK} it is shown that a certain six-element semigroup $AC_2$ (also known as $A_2^g$ \cite{leezha}, and not to be confused with the complexity class $\texttt{AC}^2$) has $\varmem(*,AC_2)$ solvable in polynomial time.  At the end of this section we show that their characterisation of the pseudovariety of $AC_2$ in fact leads to $\texttt{NL}$ solvability of $\varmem(*,AC_2)$, but that it does not lie in \texttt{FO}. \ In the present section we translate CSP complexity into $\varmem$ complexity, thereby providing a wealth of examples within tractable complexity.  For example, in \cite[\S10]{JKN} it is shown that there are infinitely many  tractable complexity classes represented by constraint problems of the kind we translate: as well as commonly encountered classes such as \texttt{L}, \texttt{NL}, there are the classes $\Mod_p(\texttt{L})$ for any prime $p$, and problems of complexity in the $\cap/\cup$ closure of any combination of these.

Let $\mathbb{A}$ be a finite relational structure of finite signature.  The constraint satisfaction problem over $\mathbb{A}$ is the computational problem defined as follows.

\fbox{\parbox{0.9\textwidth}{\noindent $\CSP(\mathbb{A})$\\
Instance: a finite structure $\mathbb{B}$ of the same signature as $\mathbb{A}$.\\
Question: is there a homomorphism from $\mathbb{B}$ into $\mathbb{A}$?}}
\medskip

We direct the reader to the collection \cite{dag} for a small sample of the wealth of literature on the $\CSP(\mathbb{A})$ problem.

The main result in this section is the following theorem.
\begin{thm}
For every finite relational structure $\mathbb{A}$ there is a finite algebra $\mathbf{A}'$ \up(not typically on the same universe\up) such that  $\CSP(\mathbb{A})$ and $\varmem(*,{\bf A}')$ are first order equivalent.
\end{thm}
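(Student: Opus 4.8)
The plan is to route the equivalence through the flat-algebra machinery of Section~\ref{sec:FO}. Given a finite relational structure $\mathbb{A}$, I would first fix a finite relational structure $\mathbb{A}^\sharp$ built (first-order) from $\mathbb{A}$, and set $\mathbf{A}'=\flat(\mathbb{A}^\sharp)$ with a semilattice $\wedge$, a projection term $\tr$, a distinguished absorbing constant $0$, and each fundamental relation recorded as an operation on the flat universe (using the remark after Theorem~\ref{thm:FOq} that the flat/semilattice apparatus adapts to relations, so that $\mathbf{A}'$ is a pointed semi-discriminator algebra with $0$). The backbone of the argument is the clean intermediate claim that $\varmem(*,\mathbf{A}')$ is first order equivalent to the universal Horn membership problem ``$\ast\in\mathsf{SP}^+(\mathbb{A}^\sharp)$''. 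For one direction, Lemma~\ref{lem:si} supplies a first-order formula $\pi_{x,y}(u,v)$ defining completely meet irreducible congruences for pointed semi-discriminator varieties with $0$, so Theorem~\ref{thm:fodecomp}, in the $\wedge$-minimum form noted immediately after it (which avoids any appeal to a linear order), yields a \emph{one-to-many} first-order reduction of $\varmem(*,\mathbf{A}')$ to $\varsimem(*,\mathbf{A}')$: to test $\mathbf{D}\in\mathsf{HSP}(\mathbf{A}')$ it suffices to test, over all parameter pairs $a\neq b$, whether the quotient $\mathbf{D}/\pi_{a,b}$ lies in the class. The characterisation underlying Theorem~\ref{thm:FO} identifies the subdirectly irreducible members of $\mathsf{HSP}(\mathbf{A}')$ as exactly the flat extensions of members of $\mathsf{SP}^+(\mathbb{A}^\sharp)$, so each such quotient un-flattens to a structure whose admissibility is precisely membership in $\mathsf{SP}^+(\mathbb{A}^\sharp)$. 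The reverse reduction is immediate: send $\mathbb{D}\mapsto\flat(\mathbb{D})$ by a first-order query, observe $\flat(\mathbb{D})$ is subdirectly irreducible, and note $\flat(\mathbb{D})\in\mathsf{HSP}(\mathbf{A}')$ iff $\mathbb{D}\in\mathsf{SP}^+(\mathbb{A}^\sharp)$.

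It then remains to choose $\mathbb{A}^\sharp$ so that $\CSP(\mathbb{A})$ is first order equivalent to ``$\ast\in\mathsf{SP}^+(\mathbb{A}^\sharp)$'', which is now a purely relational matter. One direction is comparatively routine: membership $\mathbb{D}\in\mathsf{SP}^+(\mathbb{A}^\sharp)$ asserts that, for every pair of distinct elements (to be separated) and every non-tuple (to be reflected), there exists a witnessing homomorphism $\mathbb{D}\to\mathbb{A}^\sharp$. Taking $\mathbb{A}^\sharp$ to be $\mathbb{A}$ expanded by a singleton unary relation naming each element, each such witness is an instance of $\CSP(\mathbb{A})$ carrying one extra list or disequality constraint, which is first order equivalent to $\CSP(\mathbb{A})$ itself; the universal quantification over the polynomially many separation and reflection requirements is absorbed exactly by the one-to-many reduction format. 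The opposite direction, from $\CSP(\mathbb{A})$ into the membership problem, is where the genuine content lies: I would, from an instance $\mathbb{B}$, construct (by a first-order query) a structure $\mathbb{D}$ consisting of a rigid, internally separable and reflectable gadget to which $\mathbb{B}$ is attached, arranged so that the \emph{only} possible obstruction to $\mathbb{D}\in\mathsf{SP}^+(\mathbb{A}^\sharp)$ is the non-existence of a homomorphism $\mathbb{B}\to\mathbb{A}$.

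The hard part will be making this last encoding correct. Variety membership, and hence $\mathsf{SP}^+$-membership, is intrinsically a universal condition (every defect must be witnessed by some homomorphism), whereas $\CSP(\mathbb{A})$ is the bare existential statement that one homomorphism $\mathbb{B}\to\mathbb{A}$ exists; the encoding must isolate a single controllable defect that carries this existential while guaranteeing that all auxiliary defects of $\mathbb{D}$ are witnessed independently. The obstruction is a real tension between permissiveness and rigidity: a dominating or ``free'' target element makes separation and reflection of the auxiliary part trivial but simultaneously makes every $\CSP(\mathbb{A})$ instance vacuously solvable, whereas a fully rigid target keeps the $\CSP$ meaningful but re-introduces unsatisfiable separation requirements for elements that every homomorphism identifies. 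Threading between these, by the right choice of $\mathbb{A}^\sharp$ (expected to be $\mathbb{A}$ with named elements together with a controlled auxiliary sort) and a first-order instance transformation that pins the auxiliary structure without pinning the variables of $\mathbb{B}$, is the crux of the proof. Everything else, the flat encoding of $\mathbb{A}^\sharp$, the identification of subdirectly irreducible members, and the first-order subdirect decomposition, follows from the toolkit, with the definable completely meet irreducible congruences of Lemma~\ref{lem:si} ensuring that the whole reduction stays inside first order logic rather than merely logarithmic space.
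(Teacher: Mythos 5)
Your algebraic superstructure is essentially the paper's own: the paper also takes $\mathbf{A}'$ to be a flat pointed semi-discriminator algebra built from an auxiliary relational structure $\mathbb{A}^\sharp$ (concretely, $\mathbf{A}'=\flat(\pi(\mathbb{A}^\sharp))$, where $\pi$ turns each relation into a partial projection operation), identifies the subdirectly irreducible members of $\mathsf{HSP}(\mathbf{A}')$ with the flat extensions of the members of $\mathsf{SP}(\mathbb{A}^\sharp)$ via \cite{jac:flat}, and uses the definable cmi congruences of Lemma~\ref{lem:si} together with the first order decomposition idea of Theorem~\ref{thm:fodecomp} to keep the passage between $\varmem(*,\mathbf{A}')$ and $\varsimem(*,\mathbf{A}')$ inside first order logic (the paper even improves on the one-to-many format by taking a disjoint union of the quotient instances, but your one-to-many version is acceptable given the paper's conventions on such reductions). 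The one point you gloss in this half --- that a general instance $\mathbf{B}$ must first be checked, by finitely many equations, to lie in the pointed semi-discriminator variety and to satisfy the flattened partial-projection laws, so that its subdirectly irreducible quotients really do un-flatten to relational structures --- is a harmless omission, since those checks are first order.

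The genuine gap is exactly the step you yourself label ``the crux'': the existence of $\mathbb{A}^\sharp$ such that $\CSP(\mathbb{A})$ and membership in $\mathsf{SP}(\mathbb{A}^\sharp)$ are (first order) equivalent. This is not something the rest of the argument can absorb, and your two partial sketches do not close it. For the routine-looking direction, expanding $\mathbb{A}$ by singleton unary relations naming its elements turns the witnessing problems into $\CSP$ with constants, which is first order equivalent to $\CSP(\mathbb{A})$ only under extra hypotheses (e.g.\ $\mathbb{A}$ a core, with a gluing argument you do not supply), and a ``disequality constraint'' is not a constraint over $\mathbb{A}$ at all; for the substantive direction you describe only the tension between rigidity and permissiveness and concede you cannot yet thread it. The paper does not thread it either: it imports the result wholesale from \cite{jactro13}, where it is proved that for every finite relational structure $\mathbb{A}$ there is a structure $\mathbb{A}^\sharp$ \emph{in the same signature} with $\mathbb{B}\rightarrow\mathbb{A}$ if and only if $\mathbb{B}\in\mathsf{SP}(\mathbb{A}^\sharp)$ --- so at the relational level the two problems are literally identical, with no instance transformation needed. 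In other words, your proposal correctly reduces the theorem to a genuinely nontrivial, previously published combinatorial construction, but it does not prove that construction, and the heuristics offered in its place would fail as stated.
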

\begin{proof}
The proof is mostly a corollary of constructions in \cite{jac:flat} and \cite{jactro13}, though the first order equivalence requires fresh arguments.  

It is shown in \cite[\S4]{jactro13} that for every finite relational structure $\mathbb{A}$ of finite signature, there is a structure $\mathbb{A}^\sharp$ of the same signature with the property that for all $\mathbb{B}$ we have $\mathbb{B}\rightarrow\mathbb{A}$ if and only if $\mathbb{B}\in\mathsf{SP}(\mathbb{A}^\sharp)$.  Thus every fixed finite template CSP is a universal Horn class membership problem: they are identical as problems.  

Any relational structure $\mathbb{A}$ gives rise to a partial algebra $\pi(\mathbb{A})$ by replacing each fundamental relation by the projection partial function: if $R$ has arity $n$, then it is replaced by the first projection function on domain  $R^\mathbb{A}\subseteq A^n$.  
To ensure that we arrive at a pointed semidiscriminator variety we also add a projection term~$\tr$ and use the notation $\ps(\pi(\mathbb{A}))$ to denote the flat extension of the result of adjoining the second projection $\tr$ to $\pi(\mathbb{A})$. 
We mention that $\tr$ is just the projection from a degenerate total binary relation, which could alternatively be added as a fundamental relation on all relational structures, with no impact on $\CSP$ complexity.
In \cite[\S7.1]{jac:flat} it is explained that the subdirectly irreducible members of $\mathsf{HSP}(\ps(\pi(\mathbb{A})))$ are just the isomorphism closure of $\ps(\pi(\mathbb{B}))$ for $\mathbb{B}\in\mathsf{SP}(\mathbb{A})$.  The translations between~$\mathbb{B}$ and $\ps(\pi(\mathbb{B}))$ and back again are easily seen to be first order reductions, 
so the combined effect of these observations is that every problem of the form $\CSP(\mathbb{A})$ is first order equivalent to 
$\varsimem(*,\mathbf{A}')$, where  $\mathbf{A}'$ is the algebra $\ps(\pi(\mathbb{A}^\sharp))$.  As $\varsimem(*,\mathbf{A}')$ is a restriction of $\varmem(*,\mathbf{A}')$, to complete the proof it suffices to find a first order reduction from $\varmem(*,\mathbf{A}')$ to $\CSP(\mathbb{A})$. 

It would suffice to reduce $\varmem(*,\mathbf{A}')$ to $\varsimem(*,\mathbf{A}')$, but it is easier to reduce all the way to $\CSP(\mathbb{A})$.  
The strategy is as follows.  We are given a general instance $\mathbf{B}$ of $\varmem(*,\mathbf{A}')$, and there is no loss of generality in assuming that~$\mathbf{B}$ satisfies the finitely many equations that define the pointed semidiscriminator variety of appropriate signature.  
Now $\pi(\mathbb{A}^\sharp)$ satisfies implications $f(x_1,\dots,x_n)\approx f(x_1,\dots,x_n)\rightarrow f(x_1,\dots,x_n)\approx x_1$ asserting that each of its operations are partial projections.  
Each of these finitely many laws also translates to an equation that holds on $\ps(\pi(\mathbb{A}^\sharp))$ (see \cite[Lemma 5.9]{jac:flat}), and as first order formul{\ae}, there is also no loss of generality in assuming that $\mathbf{B}$ satisfies these.  
In this case, all of the subdirectly irreducible quotients of $\mathbf{B}$ (one for each pair $a\neq b$ in $B$; denote them by $\mathbf{B}_{a,b}$) will be isomorphic to algebras of the form $\ps(\pi(\mathbb{B}_{a,b}))$, where $\mathbb{B}_{a,b}$ is an input to $\CSP(\mathbb{A})$.  
We have $\mathbf{B}\in \mathsf{HSP}({\bf A}')$ if and only if $\mathbb{B}_{a,b}$ is a YES instance of $\CSP(\mathbb{A})$ for each $a\neq b$ in~$B$.  
The reader satisfied with a one-to-many first order reduction can now simply use the one-to-many reduction supplied by Theorem \ref{thm:fodecomp}.  
But we can do slightly better, as the property that all of the structures $\mathbb{B}_{a,b}$ are YES instances of  $\CSP(\mathbb{A})$ is equivalent to the single instance $\dot\bigcup_{a\neq b\text{ in }B}\mathbb{B}_{a,b}$ being a YES instance of $\CSP(\mathbb{A})$, and as we now show, this structure can be made as a single first order query.  
We make essential recourse to the implicit linear order $<$ available in the first order query definition. 

Let $\pi_{a,b}(u,v)$ be the formula defining cmi congruences.  The universe of $\mathbb{B}'$ will defined as a subset of $B^3$, consisting of all tuples $(a,b,c)$ where $a\neq b$, none of $a,b,c$ are $\infty$, and where $c$ is a $<$-minimum element in an equivalence class of the congruence $\pi_{a,b}(u,v))$: so $\pi_{a,b}(c,d)\rightarrow c\leq d$.  Each relation $R$ (of arity $n$ say) corresponded to an operation $f_R$ in the signature of $\mathbf{B}$ of the same arity, and we define $R$ on our defined universe as all tuples 
$((a_1,b_1,c_1),\dots,(a_n,b_n,c_n))$ where $a_1=a_2=\dots=a_n$, $b_1=b_2=\dots=b_n$ and $\pi_{a_1,b_1}(c_1,f^\mathbf{B}(c_1,\dots,c_n))$.  
\end{proof}

We finish this  section by proving the claims around the semigroup $AC_2$ at the start of section; the  proof of the first claim simply involves verifying that the algorithmic check from~\cite{VGK} can be performed on a nondeterministic Turing machine in logspace.  The  proof of the second claim is by an application of Lemma~\ref{lem:czedli}.
\begin{pro}\label{pro:AC2}
$\varmem(*,AC_2)$ is in \texttt{NL}.
\end{pro}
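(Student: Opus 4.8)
The plan is to take the polynomial-time decision procedure for $\varmem(*,AC_2)$ supplied by the characterisation in \cite{VGK} and simply verify that each of its steps can be carried out within the resource bounds of nondeterministic logspace. Recall that the input is a finite semigroup $\mathbf{B}$, presented by its multiplication table, and that the characterisation from \cite{VGK} expresses membership $\mathbf{B}\in\mathsf{HSP}_{\rm fin}(AC_2)$ as the conjunction of finitely many conditions of two kinds: a finite list of (quasi-)equational laws that $\mathbf{B}$ must satisfy, and one or more structural conditions phrased in terms of products of elements of $\mathbf{B}$ (reachability within the semigroup, or membership in subsemigroups generated by designated subsets).

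First I would dispose of the equational conditions. Checking that a fixed law in a bounded number $d$ of variables holds on $\mathbf{B}$ is done by iterating over all $d$-tuples of elements, evaluating both sides by table look-ups, and comparing; since $d$ is a constant this needs only a constant number of logarithmic-size element-indices together with counters, so it lies in $\texttt{L}\subseteq\texttt{NL}$. The same applies to any quasi-equation, where the (bounded) premise is tested before the conclusion.

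The substantive part is the structural conditions. The key observation is that questions of the form ``is the product of some sequence of elements drawn from a subset $X\subseteq B$ equal to $a$?'' are reachability problems: a nondeterministic machine can guess the factors one at a time, maintaining only the current partial product (a single element, hence logspace) and multiplying on by a nondeterministically chosen element of $X$, accepting if it reaches $a$; no accepting product need have length exceeding $|B|$. Thus subsemigroup-membership and the associated reachability predicates used in the \cite{VGK} check are each in $\texttt{NL}$. Where the characterisation instead demands that \emph{every} such product avoid a forbidden element (a universal, rather than existential, reachability assertion), I would appeal to the Immerman--Szelepcs\'enyi theorem $\texttt{NL}=\texttt{co-NL}$ to keep the complement in $\texttt{NL}$ as well. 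Since $\texttt{NL}$ is closed under the constantly many conjunctions needed to combine all the conditions, the overall procedure runs in $\texttt{NL}$.

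The main obstacle is essentially a bookkeeping one: confirming that no step of the \cite{VGK} algorithm secretly requires storing an object of superlogarithmic size --- for instance a generated subsemigroup listed in full, or a transitive closure computed and retained explicitly --- rather than merely \emph{testing} membership in, or reachability through, such an object on the fly. Each potentially expensive closure or generation step must be recast as an on-demand reachability query of the kind above, and the verification that this recasting is always possible is the one place where the precise form of the \cite{VGK} characterisation must be invoked. Once that is checked, $\varmem(*,AC_2)\in\texttt{NL}$ follows.
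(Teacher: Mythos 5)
Your scaffolding is exactly that of the paper's proof: check the finitely many equations from \cite{VGK} in $\texttt{L}$, implement the remaining structural condition by nondeterministically guessing factors one at a time while storing only the running partial product, and invoke co-$\texttt{NL}=\texttt{NL}$ to handle the universal form of the condition. However, there is a genuine gap at precisely the point you classify as ``essentially a bookkeeping one.'' The structural condition in \cite{VGK} is that the subsemigroup generated by the idempotents of the input $\mathbf{B}$ has only trivial subgroups. As stated, this quantifies over subgroups --- that is, over \emph{subsets} of a generated subsemigroup --- and it is not yet of your template form ``every product of elements of $X$ avoids a forbidden element'': there is no forbidden element until one proves that failure of the condition has a first order, element-wise witness. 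That reduction is the mathematical heart of the proof and requires actual semigroup theory, not bookkeeping: since $AC_2$ satisfies $x^2\approx x^4$ (so this law may be assumed of $\mathbf{B}$, being checkable in $\texttt{L}$), every subgroup of $\mathbf{B}$ is trivial or of exponent $2$, and under this law an element $a$ lies in a nontrivial subgroup if and only if $a^3=a$ and $a\neq a^2$. Only with this in hand does the condition become ``no product $e_1e_2\cdots e_k$ of idempotents yields an element $a$ with $a^3=a$ and $a\neq a^2$,'' which is the universal reachability statement your machine can refute by guess-and-check, with closure of $\texttt{NL}$ under complement finishing the argument.

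So the missing piece is the witness characterisation itself: the paper's observation that the idempotent-generated subsemigroup has a nontrivial subgroup exactly when some product of idempotents satisfies $a^3=a$ but $a\neq a^2$. Without it, your deferred step --- ``the verification that this recasting is always possible'' --- is not a routine check one can wave through; it is where the proof lives. Everything else in your outline matches the paper's argument.
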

\begin{proof}
A polynomial time algorithm for $\varmem(*,AC_2)$ is given in Section 3 of~\cite{VGK}, and consists of checking finitely many equations (which is first order, hence in \texttt{L}), then verifying that the subsemigroup generated by idempotents has trivial subgroups.  
We may verify failure of this last property in nondeterministic logspace as follows. 
First note that the single equation $x^2\approx x^4$ ensures that all subgroups have exponent $2$ or are trivial, and we tacitly assume this throughout.  
Next observe that the property that there is a proper subgroup generated by some idempotent elements is equivalent to the existence of idempotents $e_1,e_2,\dots,e_k$ with the product $a\coloneqq e_1\dots e_k$ having $a^3=a$ but $a\neq a^2$.  
We can discover this in nondeterministic logspace as follows.  
We use three small sections of work tape, each logarithmic in size.  We have a small section of work tape to record the current value of a product~$a$; initially it is empty.  
We have a section of tape to record a current idempotent guess~$e$.  
We also have a small section to check if the current product value~$a$ satisfies $a^3=a$ but $a\neq a^2$.  
The algorithm proceeds by nondeterministically selecting an element $e$ one by one at random, checking if $e$ is idempotent (this only involves storing the current guess on the work tape and consulting the input tape to see if the square of the listed element is identical to what we stored as our guess).  
If the current guess~$e$ is idempotent, we replace the current product value~$a$ by the new value $a\cdot e$.  
Note that we record just the value of this product, not the previous values of~$a$ and~$e$.  
Then we check the input to see if $a^3=a$ but $a\neq a^2$: if $a=a^2$ then this test has failed, but otherwise we may write the value of $a^2$ to the third section of our working tape, and then check if $a\cdot a^2=a$.  
If $a\cdot a^2\neq a$ then we continue our search.  
If $a\cdot a^2= a$ then the input algebra is not in the variety of~$AC_2$.  
Because co-$\texttt{NL}=\texttt{NL}$ we have  $\varmem(*,AC_2)$ is in \texttt{NL}.
\end{proof}
To complement Proposition \ref{pro:AC2} we now show that this example does not provide a counterexample to the first order ES problem.
\begin{pro}\label{pro:AC2FO}
$\varmem(*,AC_2)$ is not in $\texttt{FO}$.
\end{pro}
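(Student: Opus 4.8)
The plan is to invoke Lemma~\ref{lem:czedli}. By the characterisation underlying Proposition~\ref{pro:AC2} (from \cite{VGK}), a finite semigroup $\mathbf{B}$ lies in $\mathsf{HSP}(AC_2)$ precisely when it satisfies a fixed finite set of equations (including $x^2\approx x^4$) and the subsemigroup generated by its idempotents has only trivial subgroups; and, as recorded in the proof of Proposition~\ref{pro:AC2}, this last condition fails exactly when there are idempotents $e_1,\dots,e_k$ whose product $a=e_1\cdots e_k$ satisfies $a^3=a$ and $a\neq a^2$, so that $\{a,a^2\}\cong\mathbb{Z}_2$. The only non-first-order feature here is the \emph{unbounded length} $k$ of such a product: the defining equations are first order, so what must be shown to escape \texttt{FO} is the existence of arbitrarily long idempotent words closing up to a nontrivial group element. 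This is the semigroup analogue of non-bipartiteness of a graph (existence of an odd closed walk), which is the classical obstruction exploited with ultraproducts.

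Concretely, I would encode an undirected graph $\mathbb{G}=\langle V,E\rangle$ into a finite semigroup $\mathbf{S}(\mathbb{G})$, realised as a completely $0$-simple Rees matrix semigroup $\mathcal{M}^0(\mathbb{Z}_2;V,V;P)$ over $\mathbb{Z}_2$ whose structure matrix $P$ takes values in $\mathbb{Z}_2\cup\{0\}$ determined by $\mathbb{G}$, in such a way that (a) every $\mathbf{S}(\mathbb{G})$ satisfies the fixed equations defining the variety of $AC_2$, and (b) the idempotents of $\mathbf{S}(\mathbb{G})$ generate a nontrivial subgroup if and only if $\mathbb{G}$ is not bipartite. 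Taking $\mathbb{G}$ to be cycles then yields $\mathbf{A}_n:=\mathbf{S}(C_{2n})\in\mathsf{HSP}(AC_2)$ (even cycles are bipartite, so the idempotent-generated subsemigroup is aperiodic) and $\mathbf{B}_n:=\mathbf{S}(C_{2n+1})\notin\mathsf{HSP}(AC_2)$ (odd cycles are not), with the crucial feature that in $\mathbf{B}_n$ the shortest idempotent word witnessing the nontrivial subgroup has length growing with $n$. Since the universe of $\mathbf{S}(\mathbb{G})$ lies inside $V\times V\times\mathbb{Z}_2$ together with a zero, and its multiplication is defined from the edge relation $E$, the assignment $\mathbb{G}\mapsto\mathbf{S}(\mathbb{G})$ is a first order interpretation of fixed dimension; by {\L}o\'s's theorem it commutes with ultraproducts, so $\prod_n\mathbf{S}(C_{k_n})/\mathscr{U}\cong\mathbf{S}\bigl(\prod_n C_{k_n}/\mathscr{U}\bigr)$.

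It then remains to recall the classical fact that for any nonprincipal ultrafilter $\mathscr{U}$ on $\mathbb{N}$ the two graph ultraproducts $\prod_n C_{2n}/\mathscr{U}$ and $\prod_n C_{2n+1}/\mathscr{U}$ are isomorphic. Indeed, ``being $2$-regular'' is first order and holds in every $C_m$, while ``containing a cycle of length at most $\ell$'' fails in $C_m$ for $m>\ell$; by {\L}o\'s both ultraproducts are $2$-regular and contain no finite cycle, hence each is a disjoint union of two-way-infinite paths, and each has $2^{\aleph_0}$ components, so each is determined up to isomorphism by this cardinal. Combining this with the previous paragraph gives $\prod_n\mathbf{A}_n/\mathscr{U}\cong\prod_n\mathbf{B}_n/\mathscr{U}$ with $\mathbf{A}_n\in\mathsf{HSP}(AC_2)$ and $\mathbf{B}_n\notin\mathsf{HSP}(AC_2)$ for all $n$, and Lemma~\ref{lem:czedli} then yields that $\varmem(*,AC_2)$ is not in \texttt{FO}.

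The main obstacle I expect is step (b): pinning down $\mathbf{S}(\mathbb{G})$ and verifying, from the explicit structure of $AC_2$ and its identities in \cite{VGK}, both that every $\mathbf{S}(\mathbb{G})$ satisfies the finitely many defining equations of the variety and that its idempotent-generated subsemigroup is aperiodic exactly for bipartite $\mathbb{G}$. The latter is a holonomy computation: the subgroup of $\mathbb{Z}_2$ generated by the idempotents of a completely $0$-simple semigroup is governed by the cycle-products of $P$ over the sandwich-matrix (Graham--Houghton) graph, and the encoding must arrange that these products are nontrivial precisely along odd cycles of $\mathbb{G}$. The parity bookkeeping of the accumulated $\mathbb{Z}_2$-coordinate along an idempotent word (so that it records the parity of the underlying closed walk, and that no short word already produces a nontrivial group element) is routine but is where the care is needed; the ultraproduct isomorphism and the appeal to Lemma~\ref{lem:czedli} are then essentially formal.
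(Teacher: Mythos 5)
Your proposal is correct in outline and rests on the same two pillars as the paper's proof: Lemma~\ref{lem:czedli}, and witnessing families of completely $0$-simple Rees matrix semigroups over the two-element group whose sandwich matrices encode cycles, with the parity of the cycle product as the non-first-order obstruction. Indeed, after Rees-matrix normalisation your $\mathbf{S}(C_{2n+1})$ becomes a cycle pattern with a single entry $g$ and your $\mathbf{S}(C_{2n})$ an all-$e$ pattern, which are essentially the paper's two families ${\bf A}_n$ and ${\bf B}_n$ (there built on the \emph{same} $n$-cycle-with-loops pattern, differing only in one sandwich-matrix entry). Where you genuinely diverge is the mechanism for the ultraproduct isomorphism. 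The paper must confront the fact that the semigroups alone do not rigidly determine the cycle structure (Rees isomorphisms can scramble it): it expands the signature by a binary relation $\sim$ recording the sandwich-matrix adjacency, observes that both expanded ultraproducts have the same $\sim$-reduct (disjoint unions of two-way infinite paths with loops, hence no cycles), and then uses the isomorphism theory of Rees matrix semigroups to normalise away the $g$ entries once no cycles remain. You instead parametrise the construction by graphs, observe that $\mathbb{G}\mapsto\mathbf{S}(\mathbb{G})$ is a first order interpretation and hence commutes with ultraproducts by {\L}o\'s's theorem, and import the classical fact that ultraproducts of even and of odd cycles over a nonprincipal ultrafilter are isomorphic (both being disjoint unions of $2^{\aleph_0}$ two-way infinite paths). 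This is arguably cleaner and more modular: the semigroup-level isomorphism comes for free from the graph level, with no coordinate-change arguments inside the ultraproduct. The cost is the formal setup of the interpretation (tags for the $\mathbb{Z}_2$ coordinate and for the zero, via a definable quotient) and the verification, via Graham's theorem on idempotent-generated subsemigroups, that $\mathbf{S}(\mathbb{G})$ lies in $\mathsf{HSP}(AC_2)$ exactly when $\mathbb{G}$ is bipartite; you correctly flag this as the main remaining work, but it sits at the same level of ``routine'' as the corresponding unproved assertion in the paper's proof (that its ${\bf B}_n$ lies in, and its ${\bf A}_n$ outside, $\mathsf{HSP}(AC_2)$), so your proposal has no gap that the paper's own argument does not also leave to the reader.
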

\begin{proof}
We give sketch details only, and direct the reader to \cite{VGK} or to any semigroup theory text (such as Clifford and Preston~\cite{clipre}) for definitions of Rees matrix semigroups and their isomorphism theory.  
The approach will be via Lemma~\ref{lem:czedli}.

Let $C_2=\{e,g\}$ be a copy of the $2$-element group with $g^2=e$.  For each $n>2$, consider the Rees matrix semigroup ${\bf A}_n$ over~$C_2$ whose sandwich matrix is as follows:
\[
\left[\begin{matrix}
e&e&0&0& \cdots &0&0\\
0&e&e&0&\cdots &0&0\\
0&0&e&e&\cdots &0&0\\
\vdots&\vdots&\vdots&&\ddots&\vdots&\vdots\\
0&0&0&0&\cdots &e&e\\
g&0&0&0&\cdots &0&e\\
\end{matrix}\right].
\] 
It is useful to think of this sandwich matrix as recording the adjacency matrix of a labelled directed graph:~$0$ entries correspond to no edge, while nonzero edges are labelled by their value (either $e$ or $g$).  Evidently the directed graph for ${\bf A}_n$ is a directed $n$-cycle with loops at each vertex; all labels are $e$ except for one that is $g$.  The isomorphism theory of Rees matrix semigroups shows that this is not a terribly stable representation: for example the label $g$ can be moved to any other edge, but worse than that, the entire graph structure can be changed.  For our purposes though, we may lock this $n$-cycle interpretation in by adding a relation to our signature that records at least the underlying unlabelled graph: for example we can include a binary relation $\sim$ to our signature that is defined as linking idempotents corresponding to the diagonal entries in the sandwich matrix according to how they should be linked in the adjacency matrix definition.  The general pattern of nonzero entries in the matrix can then be recorded as a first order sentence in this expanded language: for example all nonzero idempotents are either part of a $\sim$-edge, or they result as the product $ef$ of two idempotents $e$ and $f$ where $e\sim f$.  Such properties, as well as the property of being completely $0$-simple over $C_2$, will be preserved by ultraproducts.  We denote this expansion of ${\bf A}_n$ to the added binary relation by ${\bf A}_n'$.

Now let ${\bf B}_n$ denote the Rees matrix semigroup over $C_2$ with the same sandwich matrix as for ${\bf A}_n$ except that the one entry labelled $g$ is replaced by $e$.  Again we can expand the signature to include $\sim$ as described for ${\bf A}_n$, and denote the corresponding structure ${\bf B}_n'$.  It is routine to show that ${\bf B}_n$ is in $\mathsf{HSP}(AC_2)$ and that~${\bf A}_n$ is not (it fails the condition from~\cite{VGK} used in the proof of Proposition~\ref{pro:AC2} for example).  Let $\mathscr{U}$ be any nonprincipal ultrafilter over $\{3,4,5,\dots\}$.  In order to deduce that $\prod_{n}{\bf A}_n/\mathscr{U}\cong \prod_{n}{\bf B}_n/\mathscr{U}$ we observe that properties of the relation~$\sim$ are also preserved in the ultraproduct, and as ${\bf A}_n$ and ${\bf B}_n$ are identical on their reduct to $\sim$, the inherited definition of $\sim$ on $\prod_{n}{\bf A}_n'/\mathscr{U}$ and $\prod_{n}{\bf B}_n'/\mathscr{U}$ is also identical: the corresponding directed graph is a union of infinite directed paths with loops on each vertex and with no terminal nor initial vertices.  This records an identical structure of nonzero entries in the sandwich matrix for the two ultraproducts, and as the underlying directed graph has no non-loop cycles,  the general isomorphism theory of Rees matrix semigroups enables us to remove any entries that are not either $0$ or $e$, leaving trivially isomorphic semigroups.  By Lemma \ref{lem:czedli} we have that $\mathsf{HSP}_{\rm fin}(AC_2)$ is not definable by a first order sentence.
\end{proof}
A more detailed exploration of the possible first order definability of well known nonfinitely based finite semigroups and other algebras would be interesting.  Those where the pseudovariety is already known to be intractable we can not have first order definability, though for semigroups, this is currently only examples relating to ${\bf B}_2^1$  \cite{jac:SAT}, the examples of \cite{jacmck}, and the examples of \cite{KKP}, a tiny fraction of the known nonfinitely based examples.

\section{Undecidability of complexity of finite membership.}\label{sec:undecid}
In a groundbreaking series of papers  \cite{mck1,mck2,mck3}, Ralph McKenzie showed that there is no algorithm to decide if a given finite algebra has a finite basis for its equational theory, thereby solving Tarski's long standing Finite Basis Problem.  Later, Willard \cite{wil} showed how the construction $A(\mathcal{T})$ (here $\mathcal{T}$ is a Turing machine program) of the second of these papers \cite{mck2} could be used to the same effect, thereby avoiding the much more challenging construction $F(\mathcal{T})$ of \cite{mck3}.  An exposition of this amazing series of constructions can be found in \cite[Sections 7.10 and 7.11]{ALV}, though we have followed the notation of the original. It is natural to ask if the finite model theoretic version of Tarski's Finite Basis Problem is also undecidable, especially in view of our examples from Section \ref{sec:FO}.  In this section we give a standalone proof demonstrating that McKenzie's $A(\mathcal{T})$ construction does indeed show the undecidability of determining whether the pseudovariety of a finite algebra is first order definable at the finite level.  
\begin{thm}
The following problem is undecidable\up: given a finite algebra ${\bf A}$, determine if the pseudovariety of ${\bf A}$ is first order definable at the finite level.
\end{thm}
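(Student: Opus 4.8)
The plan is to reduce the halting problem to the stated problem using McKenzie's construction $A(\mathcal{T})$, proceeding as in the classical undecidability proof for Tarski's Finite Basis Problem but replacing its hard half with a direct verification of first order definability taken from the Toolkit sections. Recall that $\mathcal{T}\mapsto A(\mathcal{T})$ is a recursive assignment of a finite algebra of fixed finite signature to each Turing machine program, engineered so that the subdirectly irreducible members of $\mathsf{HSP}(A(\mathcal{T}))$ encode the initial segments of the computation of $\mathcal{T}$; in particular these subdirectly irreducibles have bounded cardinality precisely when $\mathcal{T}$ halts, and unbounded cardinality when $\mathcal{T}$ runs forever. I would prove that the pseudovariety $\mathsf{HSP}_{\rm fin}(A(\mathcal{T}))$ is first order definable at the finite level if and only if $\mathcal{T}$ halts; undecidability of the halting problem then gives the theorem.

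In the halting case the computation, and hence every subdirectly irreducible in $\mathsf{HSP}(A(\mathcal{T}))$, is capped in size, so in a finite signature there are only finitely many subdirectly irreducibles up to isomorphism, all finite. Since every finite algebra is a subdirect product of its subdirectly irreducible quotients, the pseudovariety is exactly $\mathsf{SP}_{\rm fin}$ of this finite list, and it remains to see that such a class is first order definable. Here I would invoke the definability machinery of Section~\ref{sec:si}: McKenzie's algebras carry a $0$-absorbing, flat-type structure that supplies definable cmi congruences (or, after a harmless term-equivalent adjustment, a semilattice reduct), so that Theorem~\ref{thm:fodecomp} and its corollary---or directly Corollary~\ref{cor:willard} in the semilattice-based case---apply and deliver a first order, indeed $\forall^*\exists^*\forall^*$, definition. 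The point is that this route establishes first order definability of the pseudovariety without ever establishing a finite equational basis for the variety, which is exactly why it bypasses both the $F(\mathcal{T})$ construction and Willard's Finite Basis Theorem.

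In the non-halting case I would show that first order definability fails---note that mere absence of a finite equational basis is not enough here, as the examples of Section~\ref{sec:FO} show---using an inexpressibility tool that does not presume uniform local finiteness. Following the pattern of Proposition~\ref{pro:AC2FO}, I would read off from the infinite computation a sequence of finite algebras $\mathbf{A}_n\in\mathsf{HSP}_{\rm fin}(A(\mathcal{T}))$ realising a valid computation segment, together with $\mathbf{B}_n\notin\mathsf{HSP}_{\rm fin}(A(\mathcal{T}))$ obtained from $\mathbf{A}_n$ by planting a single forbidden marker at depth $n$, far from the distinguished origin. Because the marker lies arbitrarily deep in the computation it is pushed to infinity in any ultraproduct, giving $\prod_n\mathbf{A}_n/\mathscr{U}\cong\prod_n\mathbf{B}_n/\mathscr{U}$, whence Lemma~\ref{lem:czedli} shows that $\mathsf{HSP}_{\rm fin}(A(\mathcal{T}))$ is not first order definable at the finite level. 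One could equally run the forward direction of Theorem~\ref{thm:meth} directly, verifying $\mathbf{A}_n\equiv_n\mathbf{B}_n$ via a Duplicator strategy that keeps Spoiler at least $n$ moves away from the marker; that implication also needs no local finiteness assumption.

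The main obstacle lies in this last step: extracting from the concrete operations of $A(\mathcal{T})$ a family $\mathbf{A}_n,\mathbf{B}_n$ differing only in a single planted marker that certifies non-membership of $\mathbf{B}_n$, placed at a controllable distance from any element the logic can name, and then checking both that each $\mathbf{B}_n$ genuinely lies outside the pseudovariety and that the marker is invisible to the ultraproduct (equivalently, to $n$ rounds of the game). A secondary point of care is to confirm that $A(\mathcal{T})$ falls within the scope of the Section~\ref{sec:si} definability results; should the raw construction lack a suitable reduct, I would first pass to a term-equivalent or flat-extension variant to which Theorem~\ref{thm:FOq} or Theorem~\ref{thm:fodecomp} applies, preserving both the recursive dependence on $\mathcal{T}$ and the halting dichotomy.
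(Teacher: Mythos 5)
Your overall architecture matches the paper's exactly: reduce from the halting problem via McKenzie's $A(\mathcal{T})$, prove first order definability of $\mathsf{HSP}_{\rm fin}(A(\mathcal{T}))$ when $\mathcal{T}$ halts, and prove inexpressibility when it does not. Your halting direction is complete and correct, and is the paper's own argument: $A(\mathcal{T})$ is semilattice-based, and when $\mathcal{T}$ halts its variety has only finitely many subdirectly irreducibles \cite[Theorem~5.1]{mck2}, so Corollary~\ref{cor:willard} applies directly --- no term-equivalent adjustment or flat-extension detour is needed, so your ``secondary point of care'' dissolves.

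The genuine gap is the non-halting direction, which is the crux of the theorem and which you yourself defer as ``the main obstacle'': you never construct the families $\mathbf{A}_n,\mathbf{B}_n$, and, more importantly, you give no certificate of non-membership for $\mathbf{B}_n$. Since the finite part of $\mathsf{HSP}(A(\mathcal{T}))$ is exactly the class of finite algebras satisfying the identities of $A(\mathcal{T})$, placing $\mathbf{B}_n$ outside the pseudovariety means exhibiting an identity of $A(\mathcal{T})$ that $\mathbf{B}_n$ violates, and for the family to defeat every fixed quantifier rank these identities must involve unboundedly many variables. The paper supplies precisely this missing ingredient. First (Claim~1) it proves that $A(\mathcal{T})$ satisfies, for each $n$, the identity permitting transposition of adjacent middle variables in the ``wrap-around'' product $x_0(x_1(\cdots(x_{n-1}(x_0y))\cdots))$; the point is that a nonzero evaluation forces all $x_i$ equal. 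Second, it takes McKenzie's path-like subdirectly irreducibles $S_n$ (which lie in the variety exactly because $\mathcal{T}$ never halts) and distorts them into algebras $T_n$ by adjoining a \emph{cyclic} component $c_i\cdot d_{i\oplus 1}=d_i$ with indices modulo $n$; the cycle violates the Claim~1 identity (Claim~2), while a path-versus-cycle Ehrenfeucht--Fra\"{\i}ss\'e argument gives $S_n\equiv_k T_n$ for $n$ large, as in the forward (assumption-free) direction of Theorem~\ref{thm:meth}. Note that the distinguishing feature here is a \emph{global} cyclic structure, not a ``single planted marker at depth $n$'': a local perturbation of a path-like algebra in this variety has no evident identity of $A(\mathcal{T})$ to violate, so your proposed shape for $\mathbf{B}_n$ (modelled on Proposition~\ref{pro:AC2FO}, where the ambient sandwich matrix is already cyclic and the marker merely toggles a group label) does not transfer. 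Whether you then finish with Lemma~\ref{lem:czedli} or with games is a matter of taste; without the identities of Claim~1 and the cyclic distortion $T_n$, neither tool has anything to act on.
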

\begin{proof}
We show that the pseudovariety of $A(\mathcal{T})$ is first order definable if and only if a Turing machine executing program $\mathcal{T}$ on the blank tape eventually halts.  For the ``if'' direction we need only know the fact that $A(\mathcal{T})$ has a semilattice operation and that when the Turing machine program $\mathcal{T}$ eventually halts on the blank tape, the variety of~$A(\mathcal{T})$ contains only finitely many subvarieties: this is \cite[Theorem~5.1]{mck2}.  Our Corollary~\ref{cor:willard} above implies first order definability of the pseudovariety of~$A(\mathcal{T})$ at the finite level, though of course we could also have used the much more powerful~\cite{wil} or \cite{wil2}.

Now we must show that when $\mathcal{T}$ does not halt, then the pseudovariety of $A(\mathcal{T})$ is not first order definable.  For this we need more details of $A(\mathcal{T})$ and of some of the subdirectly irreducibles in the pseudovariety of $A(\mathcal{T})$.  The algebra $A(\mathcal{T})$ has a very large number of operations and elements, but our arguments will mainly centre on the multiplication operation, which is remarkably simple.  There is a multiplicative $0$ element, and the only nonzero products exist amongst just 7 of the elements: $1,2,H,C, \bar{C},D,\bar{D}$, with only $1\cdot C=C$, $1\cdot \bar{C}=\bar{C}$, $2\cdot D=H\cdot C=D$ and $2\cdot \bar{D}=H\cdot \bar{C}=\bar{D}$.   

\medskip

\noindent{\bf Claim 1.} 
\begin{multline*}
A(\mathcal{T})\models x_0(x_1(x_2\dots x_i(x_{i+1}(\dots (x_{n-1}(x_0y))\dots))\dots))\approx\\ 
x_0(x_1(x_2\dots x_{i+1}(x_i(\dots (x_{n-1}(x_0y))\dots))\dots))
\end{multline*} for each $n>3$ and $0<i<n-2$.
\begin{proof}[Proof of Claim 1.]
Assume that the left hand side is nonzero.  So $y$ is amongst $C,\bar{C},D,\bar{D}$, and as the barred elements are simply duplicates of their unbarred versions, it suffices to assume that $y=C$ or $y=D$.  If $y=D$, then consulting the nonzero products, it is clear that $x_0=x_1=\dots=x_{n-1}=2$ and both sides of the equation take the value $D$.  Now assume that $y=C$.  Then $x_0$ cannot be $H$ because $H\cdot C=D$, forcing $x_{n-1}=x_{n-2}=\dots=x_1=x_0=2$, contradicting $x_0=H$.  So $x_0=1$.  Because the left hand side is nonzero and of the form $x_0\cdot t$ for some term $t$, it follows that $t$ takes the value $C$, which now forces $x_0=x_1=\dots=x_{n-1}=1$, and both sides are again equal.
\end{proof}
Now we recall the structure of some of the large finite subdirectly irreducibles in the variety of $A(\mathcal{T})$: specifically, the algebras denoted  $S_n$ by McKenzie \cite[Definition 4.1]{mck2}.  The elements are $0,a_0,\dots,a_{n-1},b_0,\dots,b_n$, and almost all of the complex array of operations in the signature are constantly equal to $0$, except for multiplication: $a_i\cdot b_{i+1}=b_i$ (and $0$ for all other pairs) and the semilattice $\wedge$, with 
\[
x\wedge y=\begin{cases}
x&\text{ if $x=y$}\\
0&\text{ otherwise,}
\end{cases}
\]
and three further operations, which conveniently are just term functions in~$\cdot$ and~$\wedge$ (on these subdirectly irreducibles, not on $A(\mathcal{T})$).  In the case where $\mathcal{T}$ does not halt, the subdirectly irreducible algebra $S_n$ lies in the variety of~$A(\mathcal{T})$ for every~$n$.  We now consider a distorted version of~$S_n$, which we call~$T_n$: it is an extension of $S_n$ to include some new elements $c_0,\dots,c_{n-1}$ and $d_0,\dots,d_{n-1}$, with the same multiplication pattern, $c_i\cdot d_{i\oplus 1}=d_i$, except now the subscript indices are to be interpreted modulo $n$ (here $\oplus$ denotes addition modulo $n$).  The semilattice $\wedge$ is also flat, and all other operations share their previous definitions, as constantly equal to $0$ or as term functions in $\cdot,\wedge$.  
In either of $S_n$ or $T_n$, we will refer to elements of the form $a_i$ and $c_i$ as being of \emph{$a$-type}, and elements of the form $b_i$ and $d_i$ as being of \emph{$b$-type}.  (So, the $a$-type elements of $S_n$ are $a_0,\dots,a_{n-1}$, whereas the $a$-type elements of $T_n$ are $a_0,\dots,a_{n-1},c_0,\dots,c_{n-1}$.)
\medskip

\noindent{\bf Claim 2.} $T_n\notin \mathsf{HSP}_{\rm fin}(A(\mathcal{T}))$.
\begin{proof}[Proof of Claim 2.]
This is because $T_n$ fails the law in Claim 1: assign $x_i$ to $c_i$ and $y$ to $d_1$: the left hand side equals $b_0$, while the right hand side equals $0$.
\end{proof}

We can now use Ehreunfeuct-Fra\"{\i}ss\'e games on the algebras $S_n$ and $T_n$ to show that $\mathsf{HSP}_{\rm fin}(A(\mathcal{T}))$ is not first order definable.  Note that we do not need to show that there exists a uniformly locally finite class containing $S_n$ and $T_n$, as this only guarantees that an Ehrenfeucht-Fra\"{\i}ss\'e game is a possible approach to proving non first order definability.

Both $S_n$ and $T_n$ have very weak generation power, in the sense that no new indices of elements can be generated from those already present in a set of generators: the only nonzero products are $a_i\cdot b_{i+1}=b_i$ and $c_i\cdot d_{i\oplus 1}=d_i$ in $T_n$.  These indices can also be used to determine a weak notion of distance: the difference in index suffices, though in the case of $T_n$ we will additionally count elements of the form $a_j$ or $b_j$ as having infinite distance from  elements of type $c_{j'}$ or $d_{j'}$.
At this point, readers familiar with Ehrenfeucht-Fra\"{\i}ss\'e games will see immediately that Duplicator has a winning strategy in the $k$-round game provided $n$ is large enough.  Readers unfamiliar with the idea are best served by consulting a text: in particular, Figure~3.4 of Libkin~\cite{lib} and accompanying text presents a situation that is almost identical, save for the fact that in our situation Duplicator must additionally match the correct type of element to that chosen by Spoiler: a choice of $a$-type by Spoiler requires Duplicator to choose $a$-type and likewise for $b$-type.  
\end{proof}

\bibliographystyle{amsplain}

\end{document}